\newtheorem{thm}{Theorem}[section]
\newtheorem{lemma}[thm]{Lemma}
\newtheorem{remark}[thm]{Remark}
\newtheorem{rmk}[thm]{Remark}
\newtheorem{proposition}[thm]{Proposition}
\newtheorem{mainthm}{Theorem}
\newcommand{\N}{\mathbb{N}}
\newcommand{\R}{\mathbb{R}}
\newcommand{\Z}{\mathbb{Z}}
\newcommand{\mcl}{\mathcal L}
\newcommand{\bbp}{\mathbb P}
\newcommand{\C}{\mathbb{C}}
\newcommand{\al}{\alpha}
\newcommand{\ga}{\gamma}
\newcommand{\ep}{\epsilon}
\newcommand{\sig}{\sigma}
\newcommand{\Sig}{\Sigma}
\newcommand{\Lam}{\Lambda}
\newcommand{\Om}{\Omega}
\newcommand{\om}{\omega}
\def\Id{\text{\rm Id}}
\newcommand{\mc}[1]{\mathcal{#1}}
\DeclareMathOperator{\essinf}{ess\ inf}
\DeclareMathOperator{\esssup}{ess\ sup}
\newcommand{\paeom}{\ensuremath{\bbp\text{-a.e. } \om \in \Om}}
\newcommand{\lot}{\lambda_\omega^\theta} 
\title{A spectral approach for quenched limit theorems for random hyperbolic dynamical systems}
\date{\today}
\begin{document}
\begin{abstract}
We extend the recent spectral approach for quenched limit theorems developed for piecewise expanding dynamics under general random driving \cite{DFGTV} to quenched random piecewise hyperbolic dynamics including some classes of billiards.
For general ergodic sequences of maps in a neighbourhood of a hyperbolic map we prove a quenched large deviations principle (LDP), central limit theorem (CLT), and local central limit theorem (LCLT).
\end{abstract}
\maketitle
 \begin{center}
\authors{D.\ Dragi\v cevi\'c \footnote{Department of Mathematics, University of Rijeka, Rijeka Croatia. E-mail: {\tt \email{ddragicevic@math.uniri.hr}}.},
G.\ Froyland\footnote{School of Mathematics and Statistics,
University of New South Wales,
Sydney NSW 2052, Australia. Email: {\tt \email{G.Froyland@unsw.edu.au }}.},
C.\   Gonz\'alez-Tokman\footnote{School of Mathematics and Physics,
The University of Queensland,
St Lucia QLD 4072,
Australia. E-mail: {\tt \email{cecilia.gt@uq.edu.au}}.},
S.\ Vaienti\footnote{Aix Marseille Universit\'e, Universit\'e de Toulon, CNRS, CPT, 13009 Marseille, France. E-mail: {\tt \email{vaienti@cpt.univ-mrs.fr}}.}}

\end{center}
\tableofcontents
\section{Introduction}
\label{sec:intro}
In our previous paper \cite{DFGTV} we extended the Nagaev-Guivarc'h spectral method to obtain limit theorems, such as the Central Limit Theorem (CLT), the Large Deviation Principle (LDP) and the Local Central Limit Theorem (LCLT), for random dynamical systems governed by a \emph{cocycle} of maps $T^{(n)}_\omega:=T_{\sigma^{n-1}\omega}\circ\cdots\circ T_{\sigma\omega}\circ T_\omega$, assuming uniform-in-$\omega$ eventual expansivity conditions on the maps $T_\omega$.
The random driving was a general ergodic, invertible transformation $\sigma:\Omega\circlearrowleft$ on a probability space $(\Omega,\mathbb{P})$, and the real observable $g$ was defined on the product space $\Omega\times X\to\mathbb{R}.$

Before introducing our new results, we briefly recap the essence of the Nagaev-Guivarc'h spectral method in the deterministic setting,  where one deals with a single map  $T$, deferring to the original articles by Nagaev \cite{N57,N61} and Guivarc'h \cite{RE83,GH88} and to the  excellent survey \cite{gouezel2015limit} for more details.
The spectral method uses the transfer operator $\mathcal{L}:\mathcal{B}\circlearrowleft$ acting on a Banach space $\mathcal{B}$, and in particular, the \emph{twisted} transfer operator $\mathcal{L}^\theta f:=\mathcal{L}(e^{\theta g}f)$, for $f$ and $g\in\mathcal{B}$.
In the situation where $\mathcal{L}^\theta$ is quasi-compact for $\theta$ near zero, regularity of the leading eigenvalues and eigenprojectors have been used to prove limit theorems \cite{keller1992spectral,hennion,rey2008large,RE83,keller1992spectral,Broise, RE83,morita1994local,hennion,szasz2004local,gouezel2005berry} and more, namely Berry-Esseen theorems \cite{GH88,gouezel2005berry} and almost-sure invariance principles \cite{gouezel2010almost}. The key equality was $\mathbb{E}(e^{\theta S_ng} f)=\mathbb{E}((\mathcal{L}^{\theta})^n f),$ where $S_ng$ denotes the Birhkoff sum of the observable $g$ and the expectation is taken with respect to the unique eigenmeasure  $m$ of the adjoint of  $\mathcal{L}.$ Since the map $\theta \mapsto \mathcal{L}^{\theta}$ is holomorphic, classical perturbation theory allows one to obtain $\mathbb{E}(e^{\theta S_ng})=c(\theta) \lambda(\theta)^n+d_n(\theta),$ where $\lambda(\theta)$ is the leading eigenvalue of $\mathcal{L}^{\theta}$, with $c$ and $\lambda$ analytic in $\theta,$ and $\sup_{\theta}|d_n(\theta)|\rightarrow 0$.
We can therefore easily compute the characteristic function and the $\log$ generating function  of the process $g\circ T^n$ with respect to the invariant probability measure of $T$, which has density $dh/dm$ where $h$ is the eigenfunction of $\mathcal{L}$ corresponding to the simple eigenvalue 1.

In the quenched random setting we must replace the $n$-th power of the twisted operator with the twisted transfer operator cocycle $\mathcal{L}^{\theta, (n)}_\omega:=\mathcal{L}^{\theta}_{\sigma^{n-1}\omega}\circ\cdots\circ \mathcal{L}^{\theta}_{\sigma\omega}\circ \mathcal{L}^{\theta}_\omega.$ By using the multiplicative ergodic theorem adapted to the study of such cocycles and generalizing a theorem of Hennion and H\'erve \cite{hennion} to the random setting, we were able in our previous paper \cite{DFGTV} to show that the cocycle $\mathcal{L}^{\theta, (n)}_\omega$ is quasi-compact for $\theta$ near to $0.$ We therefore thus obtained that for such values of $\theta$ and for $\mathbb{P}$-a.e. $\omega\in \Omega$, the top Lyapunov exponent $\Lambda(\theta)$ (analogous to the logarithm of $\lambda(\theta)$ in the deterministic setting) of the cocycle is analytic and given by
$$
\lim_{n\rightarrow \infty} \frac1n \log|\mathbb{E}_{\mu_{\omega}}(e^{\theta S_ng(\omega, \cdot)})|=\Lambda(\theta),
$$
where $\mu_{\omega}$ is the  equivariant probability measure on the $\omega$-fiber (see below). This result together with the exponential decay of the norm of the elements in the complement of the top Oseledets space, which handled the error corresponding to quantity $d_n$ above, allowed us to achieve the desired limit theorems.

In the present paper we move from cocycles of piecewise expanding maps to cocycles of hyperbolic maps (both smooth and piecewise smooth), including some classes of billiards.  To our knowledge, this is the first time that this setting has been investigated with multiplicative ergodic theory tools.
One of the primary differences with \cite{DFGTV} is the use of anisotropic Banach spaces here in place of the space of functions of bounded variation in \cite{DFGTV}.
Specifically, in the smooth hyperbolic setting and in any dimension,  we use the functional analytic setup of Gou\"ezel and Liverani \cite{GL}, and in the piecewise hyperbolic case in dimension two we use the spaces from Demers and Liverani \cite{DL} (as well as Demers and Zhang~\cite{DM1, DM2}).
This increased technicality in the underlying spaces necessitates a certain amount of checking of relevant conditions, however, we wish to highlight the fact that a wholesale change of the theory of \cite{DFGTV} is not required, which demonstrates the power and flexibility of our approach.
The use of transfer operators in the study of statistical properties and limit theorems for hyperbolic dynamical systems has flourished in the last years, and \cite{BaladiUltimate} presents a thorough discussion of the various spaces that have been used in the literature.  Our intention in this work has not been to find the most general version of the results, but rather to illustrate the applicability of the methods. In fact, we expect the methods presented here to remain applicable in some (or all) of these functional analytic scenarios.

We first consider cocycles $T^{(n)}_\omega$ where the family of maps $\{T_\omega\}_{\omega\in\Omega}$ are selected from a $C^{r+1}$-neighbourhood of a topologically transitive Anosov map $T$ of class $C^{r+1}$ (in Section \ref{sect:pwh} we consider piecewise hyperbolic maps also describing periodic Lorentz gas).
The random driving $\sigma:\Omega\circlearrowleft$ is a general (ergodic, invertible) automorphism preserving a probability measure $\mathbb{P}$.
If $d_{C^{r+1}}(T_\omega,T)<\Delta$ for $\mathbb{P}$-a.e.\ $\omega\in\Omega$ and $\Delta$ is sufficiently small,
the random dynamical system generated by the cocycle $T^{(n)}_\omega$ supports a measure $\mu$, invariant under the skew product $\tau(\omega,x)=(\sigma\omega,T_\omega x)$.
We obtain this measure by explicitly constructing the family $\mu_{\omega}$ along the marginal $\mathbb{P}$, namely $\mu=\int_\Omega \mu_\omega\ d\mathbb{P}(\omega),$ and satisfying the usual equivariance condition $\mu_{\omega}\circ T_\omega^{-1}=\mu_{\sigma\omega}$.
Our observable $g$ satisfies $g(\omega,\cdot)\in C^r$ for $\mathbb{P}$-a.e.\ $\omega$,  $\esssup_{\omega\in\Omega} \|g(\omega,\cdot)\|_{C^r}<\infty$, and is fiberwise centred:  $\int_X g(\omega,x)\ d\mu_\omega(x)=0$ for $\mathbb{P}$-a.e.\ $\omega$.
Our limit theorems concern random Birkhoff sums
\begin{equation}
\label{birkhoffsums}
S_ng(\omega,x):=\sum_{i=0}^{n-1}g(\tau^i(\omega,x))=\sum_{i=0}^{n-1}g(\sigma^i,T_\omega^{(i)}x),\quad (\omega,x)\in \Omega\times X, n\in \mathbb{N}.
\end{equation}
Our main theorems  are:
\begin{mainthm}[Quenched large deviations theorem]\label{thm:ldt}
In the above setting, there exists $\epsilon_0>0$ and a non-random function $c\colon (-\epsilon_0, \epsilon_0) \to \mathbb R$ which is nonnegative, continuous, strictly convex, vanishing only at $0$ and such that
\[
\lim_{n\to \infty} \frac 1 n \log \mu_\om(S_n g(\om, \cdot ) >n\epsilon)=-c(\epsilon), \quad \text{for  $0<\epsilon <\epsilon_0$ and \paeom}.
\]
\end{mainthm}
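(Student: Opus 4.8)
\emph{Proof proposal.} The plan is to run the random Nagaev--Guivarc'h scheme of \cite{DFGTV}, the only genuinely new ingredient being that the twisted transfer operator cocycle now acts on the anisotropic Banach space $\B$ of \cite{GL} (resp.\ of \cite{DL,DM1,DM2} in the piecewise hyperbolic case in dimension two) rather than on $BV$. One first reduces the statement to regularity properties of a single non-random function, the top Lyapunov exponent $\Lam(\theta)$ of the twisted cocycle
\[
\mcl^{\theta,(n)}_\om:=\mcl^\theta_{\sig^{n-1}\om}\circ\cdots\circ\mcl^\theta_\om,\qquad \mcl^\theta_\om f:=\mcl_\om\bigl(e^{\theta g(\om,\cdot)}f\bigr),
\]
and then runs the classical Legendre-transform argument. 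The link to the Birkhoff sums is the twisted identity $\mcl^{\theta,(n)}_\om f=\mcl^{(n)}_\om\bigl(e^{\theta S_ng(\om,\cdot)}f\bigr)$, which, applied to the untwisted equivariant density $h_\om\in\B$ of $\mu_\om$ and paired with the fibrewise reference functional $m_\om$, gives $\E_{\mu_\om}\bigl(e^{\theta S_ng(\om,\cdot)}\bigr)=m_{\sig^n\om}\bigl(\mcl^{\theta,(n)}_\om h_\om\bigr)$.

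\emph{Step 1 (spectral gap for the twisted cocycle).} For $\theta$ in a complex neighbourhood of $0$ the map $\theta\mapsto\mcl^\theta_\om$ is analytic with bounds uniform in $\om$ (using $\esssup_\om\|g(\om,\cdot)\|_{C^r}<\infty$ and the boundedness of multiplication by $C^r$ functions on $\B$), and $\mcl^0_\om$ is the untwisted cocycle, whose leading Oseledets space is the line spanned by $h_\om$, with exponent $0$. One then invokes the random Hennion--H\'erve theorem of \cite{DFGTV} (a cocycle version of \cite{hennion}): once $\{\mcl^\theta_\om\}$ is shown to obey uniform Lasota--Yorke (Doeblin--Fortet) inequalities for the strong/weak norm pair of $\B$, together with the required compactness of strong balls in the weak topology, the cocycle $\mcl^{\theta,(n)}_\om$ is quasi-compact for $\theta$ near $0$; analytic perturbation theory for such cocycles then keeps the leading Oseledets space one-dimensional, spanned by an analytic equivariant family $h^\theta_\om$, with a simple leading exponent
\[
\Lam(\theta)=\lim_{n\to\infty}\tfrac1n\log\E_{\mu_\om}\bigl(e^{\theta S_ng(\om,\cdot)}\bigr),\qquad\text{\paeom},
\]
real-analytic on some interval $(-\ep_1,\ep_1)$; moreover, for \paeom, $z\mapsto\tfrac1n\log\E_{\mu_\om}(e^{zS_ng(\om,\cdot)})$ converges to $\Lam(z)$ locally uniformly on a complex neighbourhood of $0$ (via the Nagaev-type expansion $\E_{\mu_\om}(e^{zS_ng(\om,\cdot)})=c^{(n)}_\om(z)e^{n\Lam(z)}+r^{(n)}_\om(z)$ with a subexponential, non-degenerate prefactor $c^{(n)}_\om$ and an exponentially negligible remainder $r^{(n)}_\om$). \emph{The main obstacle} is carrying out these Lasota--Yorke estimates and the perturbative control of the eigendata in the anisotropic norms, uniformly over maps $T_\om$ in a $C^{r+1}$-neighbourhood of the fixed Anosov (or piecewise hyperbolic) map $T$; this is the technical core of the paper and is done in the later sections. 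Modulo it, the scheme of \cite{DFGTV} transfers essentially verbatim.

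\emph{Step 2 (the rate function).} Differentiating the leading-eigendata relations and using analyticity of $\theta\mapsto h^\theta_\om$ and of the dual equivariant functionals, one gets $\Lam(0)=0$, $\Lam'(0)=\int_{\Om\times X}g\,d\mu=0$ by the fibrewise centering of $g$, and $\Lam''(0)=\Sig^2\ge0$, the asymptotic variance of the quenched CLT. As in \cite{DFGTV}, $\Sig^2=0$ is equivalent to $g$ being a fibrewise $L^2$-coboundary; ruling out this degenerate case, $\Sig^2>0$, so on a (possibly smaller) interval $(-\ep_2,\ep_2)$ the function $\Lam$ is strictly convex and $\Lam'$ is an analytic diffeomorphism onto a neighbourhood of $0$. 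Fix $\ep_0>0$ small enough that $(\Lam')^{-1}$ is defined on $(-\ep_0,\ep_0)$ with values in $(-\ep_2,\ep_2)$, and set $c(\ep):=\sup_{|\theta|<\ep_2}\bigl(\theta\ep-\Lam(\theta)\bigr)$; strict convexity together with $\Lam(0)=\Lam'(0)=0$ makes $c$ nonnegative, real-analytic (hence continuous), strictly convex and vanishing only at $0$, the supremum being attained at the unique $\theta_\ep:=(\Lam')^{-1}(\ep)$, of the same sign as $\ep$.

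\emph{Step 3 (upper and lower bounds).} For the upper bound, Markov's inequality gives, for $0<\ep<\ep_0$ and $0<\theta<\ep_2$,
\[
\tfrac1n\log\mu_\om\bigl(S_ng(\om,\cdot)>n\ep\bigr)\le-\theta\ep+\tfrac1n\log\E_{\mu_\om}\bigl(e^{\theta S_ng(\om,\cdot)}\bigr)\ \longrightarrow\ -\theta\ep+\Lam(\theta),
\]
so $\limsup_n\tfrac1n\log\mu_\om(S_ng>n\ep)\le\inf_{0<\theta<\ep_2}\bigl(-\theta\ep+\Lam(\theta)\bigr)=-c(\ep)$. For the lower bound, use the exponential change of measure $d\mu^{n,\theta}_\om:=e^{\theta S_ng(\om,\cdot)}\,d\mu_\om/\E_{\mu_\om}(e^{\theta S_ng(\om,\cdot)})$: the locally uniform convergence from Step~1 and Cauchy's estimates give $\tfrac1n\E_{\mu^{n,\theta}_\om}(S_ng)\to\Lam'(\theta)$ and $\tfrac1n\var_{\mu^{n,\theta}_\om}(S_ng)\to\Lam''(\theta)<\infty$, whence $S_ng/n\to\Lam'(\theta)$ in $\mu^{n,\theta}_\om$-probability by Chebyshev. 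Fixing $0<\ep<\ep_0$ and small $\del>0$ and choosing $\theta=\theta_{\ep+\del}\in(0,\ep_2)$, so $\Lam'(\theta)=\ep+\del$, one has $\mu^{n,\theta}_\om\bigl(n\ep<S_ng<n(\ep+2\del)\bigr)\to1$, and since $e^{\theta S_ng}<e^{n\theta(\ep+2\del)}$ on that event,
\[
\mu_\om\bigl(S_ng>n\ep\bigr)\ \ge\ e^{-n\theta(\ep+2\del)}\,\E_{\mu_\om}\bigl(e^{\theta S_ng}\mathbf{1}_{\{n\ep<S_ng<n(\ep+2\del)\}}\bigr)\ \ge\ \tfrac12\,e^{-n\theta(\ep+2\del)}\,\E_{\mu_\om}\bigl(e^{\theta S_ng}\bigr)
\]
for $n$ large; taking $\tfrac1n\log$, then $n\to\infty$, then $\del\to0$ (so $\theta\to\theta_\ep$) yields $\liminf_n\tfrac1n\log\mu_\om(S_ng>n\ep)\ge-\theta_\ep\ep+\Lam(\theta_\ep)=-c(\ep)$, matching the upper bound. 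Since, for a.e.\ $\om$, the conclusions of Step~1 hold simultaneously for all $\theta$ in a countable dense subset of $(-\ep_2,\ep_2)$ --- hence, by analyticity, for all such $\theta$ --- both estimates hold for that $\om$ and every $\ep\in(0,\ep_0)$, which is the assertion.
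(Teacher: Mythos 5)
Your proposal is correct and follows essentially the same route as the paper: quasi-compactness and one-dimensionality of the top Oseledets space for the twisted cocycle on the anisotropic space, identification of $\lim_n \tfrac1n\log \E_{\mu_\om}(e^{\theta S_ng(\om,\cdot)})$ with $\Lam(\theta)$ via $h_\om^0(e^{\theta S_ng(\om,\cdot)})=(\mcl_\om^{\theta,(n)}h_\om^0)(1)$, regularity and strict convexity of $\Lam$ with $\Lam(0)=\Lam'(0)=0$ and $\Lam''(0)=\Sig^2$, and then the local G\"artner--Ellis argument. The only difference is cosmetic: you unpack the Chernoff upper bound and the tilting lower bound explicitly, whereas the paper cites the G\"artner--Ellis theorem from \cite{hennion} and \cite{DFGTV} as a black box.
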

We now define the non-random quantity
\begin{equation}\label{variance}
     \Sig^2 :=\int _{\Om \times X} g(\om, x)^2\, d\mu(\om, x)+2 \sum_{n=1}^\infty \int_{\Om \times X}  g(\om, x) g(\tau^n (\om, x))\, d\mu(\om, x).
    \end{equation}
It is clear that $\Sig^2\ge 0$.
\begin{mainthm}[Quenched central limit theorem] \label{thm:clt}
In the above setting, assume that the non-random \textit{variance} $\Sig^2$, defined in \eqref{variance} satisfies $\Sig^2>0$.
Then, for every bounded and continuous function $\phi \colon \R \to \R$ and \paeom, we have
\[
\lim_{n\to\infty}\int \phi \bigg{(}\frac{S_n g(\om, x)}{ \sqrt n}\bigg{)}\, d\mu_\om (x)=\int \phi \, d\mathcal N(0, \Sig^2).
\]
(The discussion in \S\ref{sec:convexity} deals with the degenerate case $\Sig^2=0$).
\end{mainthm}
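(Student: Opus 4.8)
The plan is to run the quenched Nagaev--Guivarc'h scheme of \cite{DFGTV} in the present anisotropic setting and conclude with L\'evy's continuity theorem. By that theorem it suffices to prove that for \paeom\ and every $t\in\R$,
\[
\E_{\mu_\om}\bigl(e^{it S_n g(\om,\cdot)/\sqrt n}\bigr)\longrightarrow e^{-\Sig^2 t^2/2}\qquad\text{as }n\to\infty .
\]
The starting point is the transfer operator identity
\[
\E_{\mu_\om}\bigl(e^{\theta S_n g(\om,\cdot)}\bigr)=\bigl\langle \mcl^{\theta,(n)}_\om h_\om,\,\one\bigr\rangle,\qquad
\mcl^{\theta,(n)}_\om:=\mcl^\theta_{\sig^{n-1}\om}\circ\cdots\circ\mcl^\theta_\om,
\]
valid for $\theta$ in a complex neighbourhood of $0$, where $h_\om$ is the equivariant density of $\mu_\om$ and $\langle\cdot,\one\rangle$ is the pairing of the anisotropic space $\B$ with the constant function $\one$ (this identity is proved by iterating the relation $\langle\mcl^\theta_\om f,\psi\rangle=\langle f,e^{\theta g(\om,\cdot)}(\psi\circ T_\om)\rangle$). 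We then set $\theta=it/\sqrt n$ and study the right-hand side through the spectral data of the twisted transfer operator cocycle.

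First, the quasi-compactness of the twisted cocycle and the perturbative multiplicative ergodic theory that already underlie Theorem~\ref{thm:ldt}, developed in the earlier sections, provide for $\theta$ in a complex ball $B(0,\ep_1)$: a one-dimensional fastest Oseledets space generated by an equivariant family $\vot$ with equivariant dual functional $\pot$ (suitably normalised, with $v^0_\om=h_\om$); equivariant multipliers $\lot$ satisfying $\mcl^\theta_\om\vot=\lot\,v^\theta_{\sig\om}$ with $\lam^0_\om\equiv1$; and a complementary subspace that is contracted exponentially fast --- all with bounds and moduli of continuity in $\theta$ uniform in $\om$. Writing $\Pi^\theta_\om:=\langle\pot,\cdot\rangle\vot$ and decomposing $h_\om=\Pi^\theta_\om h_\om+(\Id-\Pi^\theta_\om)h_\om$ gives
\[
\E_{\mu_\om}\bigl(e^{\theta S_n g(\om,\cdot)}\bigr)=\Bigl(\prod_{j=0}^{n-1}\lam^\theta_{\sig^j\om}\Bigr)c^{(n)}_\om(\theta)+d^{(n)}_\om(\theta),
\]
with $c^{(n)}_\om(\theta)=\langle\pot,h_\om\rangle\,\langle v^\theta_{\sig^n\om},\one\rangle$ and $d^{(n)}_\om(\theta)=\bigl\langle\mcl^{\theta,(n)}_\om(\Id-\Pi^\theta_\om)h_\om,\one\bigr\rangle$. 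By the exponential contraction of the complement, $d^{(n)}_\om(\theta)\to0$ uniformly for $\theta\in B(0,\ep_1)$ and $\om$; and since the eigendata are continuous in $\theta$ uniformly in $\om$ and $\mu_\om(X)=1$, one gets $c^{(n)}_\om(\theta)\to1$ as $\theta\to0$, uniformly in $n$ and $\om$. In particular $d^{(n)}_\om(it/\sqrt n)\to0$ and $c^{(n)}_\om(it/\sqrt n)\to1$, so the limit is governed entirely by the product of multipliers.

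By Cauchy estimates, the uniform-in-$\om$ analyticity gives $\log\lot=\theta\,\Lam_1(\om)+\theta^2\,\Lam_2(\om)+\theta^3 E_\om(\theta)$ for $\abs{\theta}<\ep_1/2$, with $\Lam_1,\Lam_2\in L^\infty(\bbp)$ and $\esssup_\om\sup_{\abs{\theta}<\ep_1/2}\abs{E_\om(\theta)}<\infty$. Differentiating $\mcl^\theta_\om\vot=\lot\,v^\theta_{\sig\om}$ at $\theta=0$ and pairing against the equivariant dual functional shows that $\Lam_1(\om)$ differs from $\int_X g(\om,\cdot)\,d\mu_\om$ by a bounded measurable coboundary; the fibrewise centring hypothesis kills the first term, so $\Lam_1$ is a bounded coboundary and hence $\tfrac1{\sqrt n}\sum_{j=0}^{n-1}\Lam_1(\sig^j\om)\to0$ for \paeom. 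Thus, with $\theta=it/\sqrt n$,
\[
\log\Bigl(\prod_{j=0}^{n-1}\lam^{it/\sqrt n}_{\sig^j\om}\Bigr)=it\cdot\tfrac1{\sqrt n}\sum_{j=0}^{n-1}\Lam_1(\sig^j\om)-t^2\cdot\tfrac1n\sum_{j=0}^{n-1}\Lam_2(\sig^j\om)+O\bigl(n^{-1/2}\bigr),
\]
and Birkhoff's ergodic theorem gives $\tfrac1n\sum_{j=0}^{n-1}\Lam_2(\sig^j\om)\to\int_\Om\Lam_2\,d\bbp$ for \paeom, so $\prod_{j=0}^{n-1}\lam^{it/\sqrt n}_{\sig^j\om}\to e^{-t^2\int_\Om\Lam_2\,d\bbp}$. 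Combining with the previous paragraph, $\E_{\mu_\om}(e^{it S_n g(\om,\cdot)/\sqrt n})\to e^{-t^2\int_\Om\Lam_2\,d\bbp}$ for each fixed $t$ and \paeom, with an $\om$-independent limit. It remains to identify $2\int_\Om\Lam_2\,d\bbp$ with $\Sig^2$: since $\Lam(\theta):=\int_\Om\log\lot\,d\bbp$ is the top Lyapunov exponent of Theorem~\ref{thm:ldt}, $2\int_\Om\Lam_2\,d\bbp=\Lam''(0)$, and a Green--Kubo computation --- expanding $\tfrac{d^2}{d\theta^2}\big|_{\theta=0}\E_{\mu_\om}(e^{\theta S_n g(\om,\cdot)})$ via the same decomposition, using the equivariance $\mu_\om\circ T_\om^{-1}=\mu_{\sig\om}$ and the exponential decay of correlations along the cocycle --- identifies $\Lam''(0)$ with the series in \eqref{variance}, i.e.\ $\Lam''(0)=\Sig^2$. (This is also the point at which the degenerate case $\Sig^2=0$ is recognised; see \S\ref{sec:convexity}.) I expect this last identification, carried out in the anisotropic framework, together with the careful bookkeeping of the error terms when the twist $\theta=it/\sqrt n$ varies with $n$, to be the main obstacles; the underlying quasi-compactness and uniform-in-$\om$, analytic-in-$\theta$ spectral decomposition are inherited from the earlier sections and from the proof of Theorem~\ref{thm:ldt}.

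Finally, we upgrade ``for each $t$, \paeom'' to ``\paeom, for every $t\in\R$''. Fix a countable dense set $D\subset\R$ and the full-measure set of $\om$ on which the convergence holds for all $t\in D$. The characteristic functions $\psi_{\om,n}(t):=\E_{\mu_\om}(e^{it S_n g(\om,\cdot)/\sqrt n})$ satisfy $\abs{\psi_{\om,n}(t)-\psi_{\om,n}(s)}\le\abs{t-s}\bigl(\tfrac1n\E_{\mu_\om}((S_n g(\om,\cdot))^2)\bigr)^{1/2}$, and $\tfrac1n\E_{\mu_\om}((S_n g(\om,\cdot))^2)\to\Sig^2$ for \paeom\ --- by the same expansion with real $\theta$, using $\E_{\mu_\om}(S_n g(\om,\cdot))=0$ by centring --- hence is bounded in $n$; so $\{\psi_{\om,n}\}_n$ is equicontinuous. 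Equicontinuity together with pointwise convergence on $D$ and the bound $\abs{\psi_{\om,n}}\le1$ force $\psi_{\om,n}(t)\to e^{-\Sig^2 t^2/2}$ for every $t\in\R$. L\'evy's continuity theorem then yields convergence in distribution of $S_n g(\om,\cdot)/\sqrt n$ under $\mu_\om$ to $\mathcal N(0,\Sig^2)$, which is precisely the assertion that $\int\phi(S_n g(\om,\cdot)/\sqrt n)\,d\mu_\om\to\int\phi\,d\mathcal N(0,\Sig^2)$ for every bounded continuous $\phi$.
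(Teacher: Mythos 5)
Your proposal is correct and follows essentially the same route as the paper, whose proof of Theorem~\ref{thm:clt} simply invokes the proof of Theorem~B of \cite{DFGTV}: the Nagaev--Guivarc'h decomposition of $\E_{\mu_\om}(e^{it S_n g/\sqrt n})=(\mcl_\om^{it/\sqrt n,(n)}h_\om^0)(1)$ into the top Oseledets component with multipliers $\lambda_\om^\theta$ plus an exponentially decaying remainder (Lemma~\ref{lem:UnifExpDecayY2}), the coding identity of Lemma~\ref{dualtwist}, the uniform-in-$\om$ analyticity of the eigendata (Remark~\ref{phidiffrem}), and the identification $\Lambda''(0)=\Sigma^2$ (Proposition~\ref{lem:Lam''0}), all of which you reconstruct, including the ergodic-averaging of the second-order coefficient and the null-set/equicontinuity bookkeeping. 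The only pieces you leave as black boxes --- the Green--Kubo identification $\Lambda''(0)=\Sigma^2$ and the variance asymptotics --- are exactly the pieces the paper also imports from \cite{DFGTV}.
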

One of the main achievements of our previous paper was the proof of the local central limit theorem (LCLT) in the non-arithmetic and arithmetic cases. Our basic assumption, which for convenience we simply call (L), expresses the exponential decay of the strong norm of the twisted operator when the parameter $\theta=it$ has  $t\neq 0.$ Moreover we showed under additional assumptions that we will recall in section 8, that hypothesis  (L) was equivalent to a   co-boundary condition which is better known as the {\em aperiodicity condition}. In the present paper we prove the LCLT in the non-arithmetic case by assuming (L).
Recently Hafouta and Kifer \cite{HafoutaKifer} proposed a new set of assumptions which allow us to check condition (L). We will see that some of these assumptions can be verified  easily for our systems, provided we restrict the class of the driving maps.

\begin{mainthm}[Quenched local central limit theorem] \label{thm:lclt}
In the above setting  suppose condition (L) holds.
Then, for \paeom\  and every bounded interval $J\subset \R$, we have
 \[
  \lim_{n\to \infty}\sup_{s\in \R} \bigg{\lvert} \Sig \sqrt{n} \mu_\om (s+S_n g(\om, \cdot)\in J)-\frac{1}{\sqrt{2\pi}}e^{-\frac{s^2}{2n\Sig^2}}\lvert J\rvert \bigg{\rvert}=0.
 \]
\end{mainthm}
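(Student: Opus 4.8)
The plan is to follow the Nagaev--Guivarc'h strategy in its random incarnation, exactly as in \cite{DFGTV}, using the twisted transfer operator cocycle $\mcl_\om^{\theta,(n)}$ with $\theta = it$ and the key identity $\mathbb E_{\mu_\om}(e^{it S_n g(\om,\cdot)} f) = \int (\mcl_\om^{it,(n)} f)\, dm_{\sigma^n\om}$ (where $m_\om$ is the equivariant family of measures dual to the untwisted cocycle and $\mu_\om$ has density $h_\om$ with respect to $m_\om$). First I would recall from the earlier sections that for $|t|$ small the cocycle $\mcl_\om^{it,(n)}$ admits a one-dimensional fastest Oseledets subspace spanned by an equivariant family $v_\om^{it}$, with associated ``eigenvalue cocycle'' $\lambda_\om^{it}$ satisfying $\mcl_\om^{it} v_\om^{it} = \lambda_\om^{it} v_{\sigma\om}^{it}$, and that $\frac1n \sum_{k=0}^{n-1}\log\lambda_{\sigma^k\om}^{it} \to \Lambda(it)$ with the analyticity and Taylor expansion $\Lambda(it) = -\tfrac12 \Sig^2 t^2 + o(t^2)$ coming from the CLT analysis (Theorem B). The complementary part of the cocycle decays exponentially in the strong norm uniformly, and for $t$ bounded away from $0$ hypothesis (L) gives exponential decay of $\|\mcl_\om^{it,(n)}\|$ in the strong norm directly. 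Second, using these two regimes I would decompose the Fourier integral $\int_{\R} e^{-ius} \mathbb E_{\mu_\om}(e^{iu S_n g(\om,\cdot)/\sqrt n \cdot \sqrt n}) \, \phi(u)\,du$ — more precisely the standard LCLT identity
\[
\Sig\sqrt n\,\mu_\om(s + S_n g(\om,\cdot)\in J) = \frac{\Sig\sqrt n}{2\pi}\int_{\R} \widehat{\one_J}(t)\, e^{-its}\, \mathbb E_{\mu_\om}\!\big(e^{it S_n g(\om,\cdot)}\big)\, dt
\]
(after a routine smoothing/approximation of $\one_J$ by Schwartz functions from above and below, as in \cite{gouezel2015limit,DFGTV}) into the contribution of $|t|\le \delta/\sqrt n$, the contribution of $\delta/\sqrt n \le |t|\le \eps_1$, and the contribution of $|t|\ge \eps_1$.

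The first region is the main term: after the change of variables $t = \tau/\sqrt n$ one has $\mathbb E_{\mu_\om}(e^{i\tau S_n g/\sqrt n}) \approx c_\om^{i\tau/\sqrt n}\, \exp\big(\sum_{k=0}^{n-1}\log\lambda_{\sigma^k\om}^{i\tau/\sqrt n}\big)$, and the exponent converges to $-\tfrac12\Sig^2\tau^2$ by the quenched Taylor expansion of the eigenvalue cocycle together with a law-of-large-numbers control of the second-order term (this is precisely the estimate already established in the proof of Theorem B; I would quote it). The leading coefficient $c_\om^{i\tau/\sqrt n}\to 1$, $\widehat{\one_J}(\tau/\sqrt n)\to |J|$, and $e^{-i\tau s/\sqrt n}$ is handled by pulling $s$ through — the $e^{-s^2/(2n\Sig^2)}$ Gaussian factor emerges exactly as in the deterministic computation by completing the square in $\tau$, and dominated convergence (with a uniform-in-$\om$ Gaussian-type majorant for the exponent, valid on this small-$t$ scale) gives $\frac{1}{\sqrt{2\pi}} e^{-s^2/(2n\Sig^2)}|J|$, uniformly in $s$ since $|e^{-i\tau s/\sqrt n}|=1$. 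The second region is controlled by the uniform exponential decay of the complementary (non-peripheral) part of the cocycle plus the fact that on $\delta/\sqrt n\le |t|\le \eps_1$ the modulus $\prod |\lambda_{\sigma^k\om}^{it}|$ is bounded by $e^{-cnt^2}\le e^{-c\delta^2}$, which after multiplication by $\Sig\sqrt n$ and integration still tends to $0$ once $\delta$ is taken large (standard; again mirrors \cite{DFGTV}). The third region, $|t|\ge\eps_1$, is exactly where (L) is used: $\|\mcl_\om^{it,(n)}\|\le C\rho^n$ with $\rho<1$ uniformly in $\om$ and in $t$ on compacts away from $0$, and since $\widehat{\one_J}$ is integrable (after smoothing) the contribution is $O(\Sig\sqrt n\,\rho^n)\to 0$; the only subtlety is uniformity of the decay rate over the relevant compact $t$-range, which is part of what condition (L) is designed to deliver — I would cite the formulation in \S8 and in \cite{HafoutaKifer}.

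I expect the main obstacle to be the interchange of the $\bbp$-a.e.\ quantifier with the limit in $n$ in the first region: the quenched Taylor expansion $\sum_{k=0}^{n-1}\log\lambda_{\sigma^k\om}^{it} = it\,\tfrac{d\Lambda}{d\theta}(0)\cdot(\cdots) - \tfrac12\Sig^2 n t^2 + n\cdot o(t^2)$ must hold with the $o(t^2)$-remainder controlled uniformly in $n$ along the scale $t\sim\tau/\sqrt n$, and for a \emph{fixed} good $\om$; this requires the ``twisted cocycle'' estimates (analyticity of $\theta\mapsto v_\om^\theta$, $\lambda_\om^\theta$ and uniform bounds on a complex neighbourhood of $0$) from the earlier sections to be strong enough that the error term is $o(1)$ \emph{after} multiplying by $n$ and dividing by $n$, i.e.\ genuinely uniform. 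This is the same technical heart as in \cite{DFGTV}, so concretely my plan is to reduce Theorem C to the three ingredients already proved: (i) the quenched expansion of the eigenvalue cocycle with its uniform remainder (from the CLT section), (ii) the uniform exponential decay of the complementary Oseledets part (from the random Hennion--Herv\'e section), and (iii) hypothesis (L) for $|t|$ away from $0$ — and then assemble them via the smoothed Fourier-inversion identity exactly as in the deterministic LCLT proof in \cite{gouezel2015limit}, pointing to \cite{DFGTV} for the places where the argument is verbatim.
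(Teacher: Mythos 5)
Your plan is correct and is essentially the paper's own proof: the paper simply invokes the argument of Theorem C in \cite{DFGTV} verbatim, substituting the coding identity of Lemma \ref{dualtwist}, the uniform decay of the non-dominant Oseledets component (Lemma \ref{lem:UnifExpDecayY2}), and condition (L) for $t$ bounded away from $0$ --- exactly the three ingredients and the three-region Fourier-inversion decomposition you describe. The only cosmetic discrepancy is that in the Anosov setting the key identity reads $h^0_\om(e^{itS_ng(\om,\cdot)})=(\mcl^{it,(n)}_\om h^0_\om)(1)$ rather than involving a separate conformal family $m_\om$, but this does not affect the argument.
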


In Section \ref{sect:pwh} we consider random cocycles of \emph{piecewise} hyperbolic maps of the type considered in \cite{DL} on two-dimensional compact Riemannian manifolds, and billiard maps associated to periodic Lorentz gas \cite{DM1, DM2}. As we will explain later on and in order to apply the multiplicative ergodic theorem, we have now less choice for the random distribution of the maps, but for instance we can deal  with countably many maps. All the preceding theorems~\ref{thm:ldt}, \ref{thm:clt} and \ref{thm:lclt} still hold.\\

Apart from \cite{DFGTV} there are some quenched limit theorems (LDP and CLT) that have been obtained using different methods.
Kifer derives a large deviation principle \cite{kifer1990large,kifer1992equilibrium,kifer1998limit}  for occupational measures using theory of equilibrium states, and a central limit theorem  via martingale methods;  in both cases, he treats random subshifts of finite type and random smooth expanding maps.
Recently, Hafouta and Kifer \cite{HafoutaKifer} proved limit theorems  for these systems in the more general ``nonconventional setting''.
They used (complex) cone techniques, where the cones were defined in the functional space upon which the transfer operator acts. We emphasize that  they don't consider the case of hyperbolic dynamics studied in the present paper. In fact, is not clear if their cone techniques can be adapted to the present setting.
Bakhtin \cite{bakhtin1994random} is probably the closest to our work; he proves a central limit theorem and large deviation estimates for mixing sequences of smooth hyperbolic maps with common expanding and contracting directions, under a variance growth condition on the Birkhoff sums. He also used cones, but living on the tangent space of the manifold.
In comparison to Bakhtin,  we can additionally treat the case of random piecewise hyperbolic maps (with singularities and including billiards), and moreover we exhibit explicitly the rate function which produces asymptotic large deviation bounds; the local CLT is also new in this setting.

\section{Preliminaries}
\label{sec:prelim}

Let $X$ be a $d$-dimensional $C^\infty$ compact connected Riemannian manifold and let $T$ be a topologically transitive Anosov map of class $C^{r+1}$, where $r>2$.
We follow the setup of \cite{GL}.
Replacing the Riemannian metric by an adapted metric \cite{mather}, we use hyperbolicity constants $0<\nu<1<\lambda$, where $\lambda$ is less than the minimal expansion along the unstable directions, $\nu$ is greater than the minimal contraction along the stable directions,
 and the angles between the stable and unstable spaces (of dimensions $d_s, d_u$, respectively) are close to $\pi/2$.
A collection of $C^\infty$ coordinate charts $\psi_i:(-r_i,r_i)^d\to X$, $i=1,\ldots,N$ are defined so that $\bigcup_{i=1}^N \psi_i((-r_i/2,r_i/2)^d)$ cover $X$, with the $r_i$ small enough that $D\psi_i(0)\cdot (\mathbb{R}^{d_s}\times \{0\})=E^s(\psi_i(0))$, $|\psi_i|_{C^{r+1}}, |\psi_i^{-1}|_{C^{r+1}}\le 1+\kappa$, and $\kappa$ small enough in such a way  that the stable cone at $x$ in $\mathbb{R}^d$ is compatibly mapped to the stable cone at $\psi_i(x)$ in $X$ (see \cite{GL} for details).
Let $G_i(K)$ denote the set of graphs of $C^{r+1}$ functions $\chi:(-r_i,r_i)^{d_s}\to (-r_i,r_i)^{d_u}$ with $|\chi|_{C^{r+1}}\le K$ (and with $|D\chi|\le c_i$ so that the tangent space of the graph belongs to the stable cone in $\mathbb{R}^d$ mentioned above).
For large enough $K$, the coordinate map $\psi_j^{-1}\circ T^{-1}\circ \psi_i$  maps $G_i(K)$ into $G_j(K')$ for some $K'<K$.
For $A$ sufficiently large, (depending on $\kappa$ and $\nu$) and $\delta$ small enough that $A\delta<\min_i r_i/6$, an admissible graph is a map $\chi:\bar{B}(x,A\delta)\to (-2r_i/3,2r_i/3)^{d_u},$ $\bar{B}(x,A\delta)\subset (-2r_i/3,2r_i/3)^{d_s}$;  the collection of admissible graphs is denoted $\Xi_i$.

 For $p\in \mathbb N$, $p\le r$, $q\ge 0$ and $h\in C^r(X,\mathbb{C})$, $\varphi\in C^q(X,\mathbb{C})$ we define (using the notation in~\cite{GL})
\begin{equation}\label{auxnorms}
 \lVert h \rVert^{\sim}_{p, q}:=\sup_{\substack{\lvert \alpha \rvert=p\\ 1\le i\le N} } \sup_{\substack{\chi \colon \overline{B}(x, A\delta)\to \mathbb R^{d_u} \\ \chi \in \Xi_i}}
 \sup_{\substack{\varphi \in C_0^q(\overline B(x, \delta), \mathbb C) \\ \lvert \varphi \rvert_{C^q} \le 1}}
 \bigg{\lvert}\int_{B(x, \delta)}\big{[}\partial^{\alpha} (h\circ \psi_i)\big{]} \circ (\mathcal{\Id}, \chi)\cdot \varphi \bigg{\rvert}.
\end{equation}
Finally, for $p$ and $q$ as above satisfying $p+q<r$, we set
\begin{equation}\label{norms}
 \lVert h\rVert_{p, q}:=\sup_{0\le k\le p}\lVert h \rVert^{\sim}_{k, q+k}=\sup_{p' \le p, q'\ge q+p'}\lVert h \rVert^{\sim}_{p', q'}.
\end{equation}
The space $\mathcal B^{p, q}$ is defined to be the completion of $C^r(X, \mathbb C)$ with respect to the norm $\lVert \cdot \rVert_{p, q}$.\\ The following proposition will be useful when applying the multiplicative ergodic theorem.

\begin{proposition}
 The space $\mathcal B^{p, q}$ is separable.
\end{proposition}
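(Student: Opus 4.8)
The plan is to reduce separability of $\mathcal B^{p,q}$ to that of $C^r(X,\mathbb C)$ with its usual $C^r$-norm, using the elementary fact that a countable set which is dense in a dense subspace is dense in the ambient space. By construction $C^r(X,\mathbb C)$ is $\lVert\cdot\rVert_{p,q}$-dense in $\mathcal B^{p,q}$, so it suffices to exhibit a countable subset $D\subset C^r(X,\mathbb C)$ that is dense in $C^r(X,\mathbb C)$ for $\lVert\cdot\rVert_{p,q}$; such a $D$ is then automatically $\lVert\cdot\rVert_{p,q}$-dense in $\mathcal B^{p,q}$.

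First I would recall that $(C^r(X,\mathbb C),\lVert\cdot\rVert_{C^r})$ is a separable Banach space. This is standard: using the finite atlas $\{\psi_i\}_{i=1}^N$ of Section~\ref{sec:prelim} and a subordinate $C^\infty$ partition of unity $\{\rho_i\}$, the chartwise derivative map $h\mapsto\big(\partial^\alpha((\rho_i h)\circ\psi_i)\big)_{1\le i\le N,\ \lvert\alpha\rvert\le r}$ is a linear homeomorphism of $C^r(X,\mathbb C)$ onto a subspace of a finite product of copies of $C^0$ of a compact cube; such a product is separable, and subspaces of separable metric spaces are separable. Fix a countable $\lVert\cdot\rVert_{C^r}$-dense set $D\subset C^r(X,\mathbb C)$ (one may even take $D\subset C^\infty(X,\mathbb C)$).

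The only point requiring a computation is the continuity of the identity map $(C^r(X,\mathbb C),\lVert\cdot\rVert_{C^r})\to(C^r(X,\mathbb C),\lVert\cdot\rVert_{p,q})$, which then forces $D$ to be $\lVert\cdot\rVert_{p,q}$-dense. This is immediate from \eqref{auxnorms}--\eqref{norms}: for $0\le k\le p$ and any admissible choice of $\alpha$ (with $\lvert\alpha\rvert=k$), $i$, $\chi\in\Xi_i$ and $\varphi\in C_0^{q+k}$ with $\lvert\varphi\rvert_{C^{q+k}}\le1$, the integral appearing in $\lVert h\rVert^{\sim}_{k,q+k}$ is bounded in modulus by $\leb(B(x,\delta))\,\lVert h\circ\psi_i\rVert_{C^k}\,\lvert\varphi\rvert_{C^0}$; since $\lvert\varphi\rvert_{C^0}\le1$, $k\le p\le r$ and $\lvert\psi_i\rvert_{C^{r+1}}\le1+\kappa$, the chain rule yields $\lVert h\rVert^{\sim}_{k,q+k}\le C\lVert h\rVert_{C^r}$ with $C$ depending only on the charts, $\delta$ and $\kappa$, and then taking the supremum over $0\le k\le p$ in \eqref{norms} gives $\lVert h\rVert_{p,q}\le C\lVert h\rVert_{C^r}$. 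Hence $D$ is a countable dense subset of $\mathcal B^{p,q}$, which is therefore separable.

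I do not anticipate a genuine obstacle here: the essential content is the estimate of the previous paragraph, namely that $\lVert\cdot\rVert_{p,q}$ controls only integrated derivatives of order at most $p\le r$, so that $C^r$-convergence is strictly stronger than $\lVert\cdot\rVert_{p,q}$-convergence. The only mild care needed is to arrange the partition-of-unity localization in the separability of $C^r(X,\mathbb C)$ to be compatible with the chart domains fixed in Section~\ref{sec:prelim}, which is routine.
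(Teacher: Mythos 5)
Your proposal is correct and follows essentially the same route as the paper: the paper's one-line proof invokes \cite[Remark 4.3]{GL} for exactly the continuity estimate $\lVert h\rVert_{p,q}\le C\lVert h\rVert_{C^r}$ that you derive by hand from \eqref{auxnorms}--\eqref{norms}, and then combines it with the existence of a countable $C^r$-dense subset of $C^\infty(X,\mathbb C)$ and the density of $C^r(X,\mathbb C)$ in the completion. Your version merely makes the embedding estimate and the separability of $(C^r(X,\mathbb C),\lVert\cdot\rVert_{C^r})$ explicit, both of which are done correctly.
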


\begin{proof}
 The desired conclusion follows directly from~\cite[Remark 4.3]{GL} after we note that $C^\infty (X, \mathbb C)$ has a countable subset which is dense with respect to the
 $C^r$ norm.
\end{proof}
We recall from~\cite[Section 4]{GL} that the elements of $\mathcal B^{p, q}$ are distributions of order at most $q$. More precisely, there exists $C>0$ such that   any $h\in \mathcal B^{p, q}$ induces a
linear functional
$\varphi \to h(\varphi)$ with the property that
\begin{equation}
\label{dist_cont}
 \lvert h(\varphi)\rvert \le C\lVert h\rVert_{p, q} \lvert \varphi\rvert_{C^q}, \quad \text{for $\varphi \in C^q(X,\mathbb{C})$.}
\end{equation}
In particular, for $h\in C^r$ we have that
\begin{equation}
\label{dcont}
h(\varphi)=\int_X h\varphi,  \quad \text{for $\varphi \in C^q(X,\mathbb{C})$.}
\end{equation}
We say that $h\in \mathcal B^{p, q}$ is nonnegative and write $h\ge 0$ if $h(\varphi)\ge 0$ for any $\varphi \in C^q(X,\mathbb{R})$ such that $\varphi \ge 0$.

Let $\mcl_T \colon \mathcal B^{p, q} \to \mathcal B^{p, q}$ be the transfer operator associated to $T$ defined by
\begin{equation}
\label{aux_cont}
  (\mcl_T h)(\varphi)=h(\varphi \circ T), \quad \text{for $h\in \mathcal B^{p, q}$ and $\varphi \in C^q(X,\mathbb{C})$.}
\end{equation}
We recall that for  $h\in C^r(X, \mathbb C)$, $\mathcal{L}_T$ is the function given by
\begin{equation}\label{tocont}
 \mcl_T h=\bigg{(}\frac{h} {\lvert \det T\rvert}\bigg{)}\circ T^{-1}.
\end{equation}
Take $g\in C^r(X, \mathbb C)$ and $h\in \mathcal{B}^{p,q}$. Then, there exists a sequence $(h_n)_n \subset C^r(X, \mathbb C)$ that converges to $h$ in $\mathcal{B}^{p,q}$.  It follows  that $(gh_n)_n \subset C^r(X, \mathbb C)$ is a Cauchy sequence in $\mathcal{B}^{p,q}$ and therefore it converges to some element of $\mathcal{B}^{p,q}$ which
we denote by $g\cdot h$. It is straightfoward to verify that the above construction does not depend on the particular choice of the sequence $(h_n)_n$. Moreover, the action of $g\cdot h$ as a distribution is given by
\begin{equation}\label{1145}
(g\cdot h)(\varphi)=h(g\varphi), \quad \varphi \in C^q(X, \mathbb C).
\end{equation}
We will need  the following result.
\begin{lemma}
\label{duallemma}
For $h\in\mathcal{B}^{p,q}, g\in C^r(X,\mathbb{C})$ one has $\mathcal{L}_T(g\circ T\cdot h)=g\cdot\mathcal{L}_Th$.
\end{lemma}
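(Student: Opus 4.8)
The plan is to establish the identity first on the dense subspace $C^r(X,\mathbb C)\subset\mathcal B^{p,q}$ and then extend it by continuity. For this it suffices to observe that both sides of the asserted equality depend on $h$ through bounded linear operators on $\mathcal B^{p,q}$: the transfer operator $\mathcal L_T$ is bounded on $\mathcal B^{p,q}$ by the Lasota--Yorke estimates of \cite{GL}, and multiplication by a fixed function in $C^r(X,\mathbb C)$ is a bounded operator on $\mathcal B^{p,q}$ --- this is precisely the estimate that makes the construction of $g\cdot h$ preceding the lemma legitimate, and it is proved in \cite{GL}. Since $T\in C^{r+1}$ we have $g\circ T\in C^r(X,\mathbb C)$, so $h\mapsto g\circ T\cdot h$ is covered by the same statement. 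Hence $h\mapsto\mathcal L_T(g\circ T\cdot h)$ and $h\mapsto g\cdot\mathcal L_Th$ are both bounded on $\mathcal B^{p,q}$, and it is enough to check that they agree for $h\in C^r(X,\mathbb C)$.

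When $h\in C^r(X,\mathbb C)$ every object involved is a genuine function: $g\circ T\cdot h$ is the pointwise product $(g\circ T)h\in C^r(X,\mathbb C)$, and by \eqref{tocont} $\mathcal L_Th=(h/|\det T|)\circ T^{-1}$, which again lies in $C^r(X,\mathbb C)$ because $T$ is a $C^{r+1}$ diffeomorphism and $\det T$ never vanishes. Applying \eqref{tocont} a second time and using $(g\circ T)\circ T^{-1}=g$ gives
\[
\mathcal L_T(g\circ T\cdot h)=\Big(\frac{(g\circ T)h}{|\det T|}\Big)\circ T^{-1}=g\cdot\Big(\frac{h}{|\det T|}\circ T^{-1}\Big)=g\cdot\mathcal L_Th ,
\]
an identity of elements of $C^r(X,\mathbb C)$, hence of $\mathcal B^{p,q}$. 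The general case then follows from the density of $C^r(X,\mathbb C)$ in $\mathcal B^{p,q}$ together with the boundedness noted in the previous paragraph.

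A second, shorter route is to compare both sides directly as distributions of order $q$: for $\varphi\in C^q(X,\mathbb C)$, combining \eqref{aux_cont} and \eqref{1145} one computes
\[
\big(\mathcal L_T(g\circ T\cdot h)\big)(\varphi)=(g\circ T\cdot h)(\varphi\circ T)=h\big((g\circ T)(\varphi\circ T)\big)=h\big((g\varphi)\circ T\big)=(\mathcal L_Th)(g\varphi)=(g\cdot\mathcal L_Th)(\varphi).
\]
The single point requiring care in this version --- and which the density argument avoids --- is that two elements of $\mathcal B^{p,q}$ inducing the same functional on $C^q(X,\mathbb C)$ must coincide, i.e.\ that the embedding $\mathcal B^{p,q}\hookrightarrow (C^q(X,\mathbb C))'$ from \eqref{dist_cont} is injective; this is part of the construction in \cite{GL}. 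I would therefore present the density argument as the main line. There is no serious obstacle here: the proof reduces to the one-line computation above once the (standard) boundedness of multiplication by a $C^r$ function on $\mathcal B^{p,q}$ is invoked, and that boundedness --- the only mildly technical ingredient --- is already used implicitly in the paragraph preceding the lemma.
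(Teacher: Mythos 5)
Your proposal is correct, and in fact it contains the paper's own proof as your ``second, shorter route'': the paper's argument is precisely the one-line distributional computation
$[\mathcal{L}_T(g\circ T\cdot h)](\varphi)=(g\circ T\cdot h)(\varphi\circ T)=h\bigl((g\varphi)\circ T\bigr)=(\mathcal{L}_Th)(g\varphi)=(g\cdot\mathcal{L}_Th)(\varphi)$
via \eqref{aux_cont} and \eqref{1145}. The point you flag --- that this only identifies the two sides as elements of $\mathcal B^{p,q}$ if the map $h\mapsto(\varphi\mapsto h(\varphi))$ from \eqref{dist_cont} is injective --- is legitimate and is indeed the one implicit ingredient; it is part of the construction in \cite[Section 4]{GL} (the statement that elements of $\mathcal B^{p,q}$ \emph{are} distributions of order at most $q$ is exactly this injectivity), and the paper relies on the same identification elsewhere, e.g.\ when defining nonnegativity of $h\in\mathcal B^{p,q}$ through its action on test functions. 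Your main line (boundedness of $\mathcal L_T$ and of multiplication by a fixed $C^r$ function, plus the pointwise identity $\bigl((g\circ T)h/|\det T|\bigr)\circ T^{-1}=g\cdot\bigl((h/|\det T|)\circ T^{-1}\bigr)$ on the dense subspace $C^r$) is a genuinely different, slightly longer route that trades the injectivity of the embedding for the boundedness of the multiplication operator --- which, as you say, is already needed to make sense of $g\cdot h$ in the first place --- so both arguments rest on facts the paper has already committed to. Either write-up would be acceptable; the paper simply opts for the shorter one.
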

\begin{proof}
Let $\varphi\in C^q(X,\mathbb{C})$. It follows from~\eqref{aux_cont} and~\eqref{1145} that
 $[\mathcal{L}_T(g\circ T\cdot h)](\varphi)=(g\circ T\cdot h)(\varphi\circ T)=h(g\circ T\cdot\varphi\circ T)=\mathcal{L}_Th(g\cdot\varphi)=(g\cdot\mathcal{L}_Th)(\varphi)$,
which yields the desired result.
\end{proof}

\section{Building the cocycle $\mcl$} \label{sec:InitialCocycle}
In the sequel we will consider the case $p=q=1$ and $r>2$, but we will also require $T$ to be $C^{r+1}$, to be in a suitable framework for perturbations. Using the fact that the unit ball in $\mathcal B^{1, 1}$ is relatively compact in $\mathcal B^{0, 2}$ \cite[Lemma 2.1]{GL}, it follows from~\cite[Theorem 2.3]{GL} that the associated transfer operator
$\mcl_T$ is quasicompact on $\mathcal B^{1,1}$, $1$ is a simple
eigenvalue and there are no other eigenvalues of modulus $1$. This in particular implies (using the terminology as in~\cite[Definition 2.6]{CR}) that $\mcl_T$ is exact in
$\{h\in \mathcal B^{1, 1}: h(1)=0\}$.
Let
\[
\mathcal M_\ep(T)=\{S : \ \text{$S$ is an Anosov map of class $C^{r+1}$ satisfying $d_{ C^{r+1}}(S, T)<\epsilon$}\}.
\]
We also recall (see~\cite[Lemmas 2.1. and 2.2]{GL} and the discussion at the beginning of \S7 \cite{GL}) that there exist $\epsilon, A>0$ and $c\in (0, 1)$ such that for any $T'\in \mathcal M_\ep(T)$, one has
\begin{itemize}
 \item  $\lVert \mcl_{T'}^n h\rVert_{0,2}\le A\lVert h\rVert_{0, 2}$ for each $n\in \N$ and $h\in \mathcal B^{1,1}$;
 \item
 $
  \lVert \mcl_{T'}^n h\rVert_{1,1}\le Ac^n \lVert h\rVert_{1,1}+A\lVert h\rVert_{0, 2}$ for each $n\in \N$ and $h\in \mathcal B^{1,1}$.
\end{itemize}
Consider the family of (bounded, linear) transfer operators, acting on the Banach space $(\mathcal B^{1, 1}, \|\cdot\|)$,
\[
\mathcal O_\ep(T, \mathcal B^{1, 1})=\{\mcl_{S}: \mathcal B^{1, 1} \to \mathcal B^{1, 1} \text{ such that }  S \in \mc M_\ep(T)\}.
\]
It follows from~\cite[Proposition 2.10]{CR} (applied to the case where $\lVert \cdot \rVert=\lVert \cdot \rVert_{0,2}$ and $\lvert \cdot \rvert_v=\lVert \cdot \rVert_{1,1}$)
that there exists $0<\epsilon_0 \le \epsilon$, $D, \lambda >0$ such that for any $\mathcal{L}_{T_1}, \ldots, \mathcal{L}_{T_n}\in \mathcal O_{\ep_0}(T,\mathcal{B}^{1,1})$, one has
\begin{equation}\label{dec_cont}
 \lVert \mcl_{T_n}\circ\cdots\circ \mcl_{T_2}\circ\mcl_{T_1}h\rVert_{1,1} \le De^{-\lambda n}\lVert h\rVert_{1,1} \quad \text{for $h\in \mathcal B^{1,1}$ satisfying $h(1)=0$,}
\end{equation}
where $\mcl_{T_i}$ denotes the transfer operator associated with $T_i$.
From now on, we replace $\epsilon_0$ by $\epsilon$ so that (\ref{dec_cont}) holds on $\mathcal O_\ep(T, \mathcal B^{1, 1})$.

We now build the cocycle $\mc{R}:=(\Om, \mc F, \bbp, \sig, \mathcal B^{1, 1}, \mcl)$, simply referred to as $\mcl$,
as follows:
\begin{enumerate}
\item
Let $(\Om, \mc F, \bbp)$ be a  probability space, where $\Om$ is a Borel subset of a separable, complete metric space and $\sig: \Om \to \Om$ an ergodic, invertible $\bbp$-preserving transformation.
\item
Let $\mc{T}: \Om \to \mathcal M_\ep(T)$ be a  measurable map given by $\om \mapsto T_\om$.
By applying~\cite[Lemma 7.1]{GL} we find that there exists $C>0$ such that for any $S \in \mc M_\ep(T)$,
\[
 \sup_{\lVert h\rVert_{1,1}\le 1}\lVert (\mcl_{S}-\mcl_T)h\rVert_{0, 2} \le C\epsilon.
\]
\end{enumerate}

\subsection{Strong measurability of $\omega\mapsto\mcl_\omega$}\label{S:strongMeas}

In this section we demonstrate strong measurability of the map $\mcl: \Om \to \mc{O}_\ep (T,\mathcal B^{1, 1})$ given by $\om \mapsto \mcl_\om:= \mcl_{T_\om}$;  this is required to establish the existence of measurable Oseledets spaces for the cocycle.
 To prove strong measurability of  $\om \mapsto \mcl_\om:= \mcl_{T_\om}$, we will show that the map from $\mc{M}_\ep(T)$ to $\mc{B}^{1,1}$ defined by $S \mapsto \mcl_S$ is strongly continuous.
For this, let $S \in \mc{M}_\ep(T)$ and $h \in \mathcal B^{1, 1}$. We must show that
$\| \mcl_{\tilde{S}}h - \mcl_S h\|_{1,1} \to 0$ as $d_{C^{r+1}}(\tilde{S},S)\to0$.
First, assume $h\in  C^r$.
Then, we need to estimate differences of the form
$$\bigg{\lvert}\int_{B(x, \delta)}\big{[}\partial^{\alpha} (\mcl_S h\circ \psi_i)\big{]} \circ (\mathcal{\Id}, \chi)\cdot \varphi -\int_{B(x, \delta)}\big{[}\partial^{\alpha} (\mcl_{\tilde{S}} h\circ \psi_i)\big{]} \circ (\mathcal{\Id}, \chi)\cdot \varphi \bigg{\rvert},$$
where
$\alpha, \chi$ and $\varphi$ vary as in the definition in \eqref{auxnorms}, with $p=q=1$.
Arguing as in \cite[Lemma 7.1]{GL}, and employing the corresponding notation, we write
\begin{equation}\label{eq:transfOpCharts}
\int_{B(x, \delta)}\big{[}\partial^{\alpha} (\mcl_S h\circ \psi_i)\big{]} \circ (\mathcal{\Id}, \chi)\cdot \varphi=
\sum_{|\beta|\leq |\alpha|} \sum_{j=1}^l
 \int_{B(x_j, \delta)}\partial^{\beta} \tilde{h}_j \circ (\mathcal{\Id}, \chi_j) \cdot F_{\alpha, \beta, S, j} \cdot \rho_j,
\end{equation}
where
$\chi_1, \dots, \chi_l$ are $\gamma$-admissible graphs whose corresponding $\gamma$-admissible leaves cover $S^{-1}(W)$,
with $W$ an admissible leaf corresponding to the graph of $\chi$;
$\tilde{h}_j = h \circ \psi_{i(j)}$; $\{\rho_j\}_{j=1, \dots, l}$ is a partition of unity subordinated to the $\gamma$-admissible leaves of $\chi_j$; and $F_{\alpha, \beta, S, j}$ are functions bounded in $C^{q+|\beta|}$.
A similar expression holds for $\int_{B(x, \delta)}\big{[}\partial^{\alpha} (\mcl_{\tilde{S}} h\circ \psi_i)\big{]} \circ (\mathcal{\Id}, \chi)\cdot \varphi$, with $F_{\alpha, \beta, S, j}$ replaced by $F_{\alpha, \beta, \tilde{S}, j}$ and $\chi_j$ replaced by $\tilde{\chi}_j$, the graph corresponding to $\psi_{i(j)}^{-1} \circ \tilde{S}^{-1}\circ S \circ \psi_{i(j)} \circ (Id, \chi_j) (B(x_j,  \gamma A \delta) )$.
Furthermore, if $d_{C^{r+1}}(S, \tilde{S})$ is small enough, each $\tilde{\chi_{j}}$ is a graph in $\Xi_{i(j)}$, and $|\chi_{j}-\tilde{\chi_{j}}|_{C^2(\bar B(x_j,  A \delta))}< C d_{C^{r+1}}(S, \tilde{S})$.
Also,
$\|F_{\alpha, \beta, S, j}\|_{C^{q+|\beta|}}, \| F_{\alpha, \beta, \tilde{S}, j}\|_{C^{q+|\beta|}}$ are uniformly bounded for
 $S, \tilde{S} \in \mc{M}_\ep(T)$ and
$\|F_{\alpha, \beta, S, j}- F_{\alpha, \beta, \tilde{S}, j}\|_{C^{q+|\beta|}} \to 0$ as $d_{C^{r+1}}(\tilde{S},S)\to0$, uniformly over $\varphi$ as in \eqref{auxnorms}.
Hence, as $d_{C^{r+1}}(\tilde{S},S)\to0$, we get
\[
\Big| \int_{B(x_j, \delta)}
\partial^{\beta} (\tilde{h}_j) \circ (\mathcal{\Id}, \tilde{\chi}_j) \cdot F_{\alpha, \beta, \tilde{S}, j} \cdot \rho_j -
\partial^{\beta} (\tilde{h}_j) \circ (\mathcal{\Id}, \chi_j) \cdot F_{\alpha, \beta, S, j} \cdot \rho_j
\Big | \to 0,
\]
uniformly over $\chi$ (and so $\chi_j$) and $\varphi$ as in \eqref{auxnorms}.
It then follows from \eqref{eq:transfOpCharts} that $\| \mcl_{\tilde{S}}h - \mcl_S h\|_{1,1} \to 0$ as $d_{C^{r+1}}(\tilde{S},S)\to0$, as claimed.

The result for general $h \in \mathcal{B}^{1,1}$ follows from an approximation argument by $C^r$ functions, because if $d_{C^{r+1}}(S,\tilde{S})$ is sufficiently small,  then $\|\mcl_{\tilde{S}}\|_{1,1}\leq 1+ \|\mcl_{S}\|_{1,1}=:M$. Indeed,
let $\{h_j\}_{j\in \N}$ be a sequence of  $C^r$ functions such that $\lim_{j\to \infty}h_j =h$ in $B^{1,1}$, and let $\ep>0$.  Then, there exists $n\in \N$ such that $\|h-h_n\|_{1,1}<\frac{\epsilon}{3M}$.
Hence, $\| \mcl_{\tilde{S}}h - \mcl_S h\|_{1,1} \leq \| \mcl_{\tilde{S}}h - \mcl_{\tilde{S}} h_n\|_{1,1}+\| \mcl_{\tilde{S}}h_n - \mcl_S h_n\|_{1,1}+\| \mcl_{S}h_n - \mcl_S h\|_{1,1} \leq \frac{2\ep}{3} +\| \mcl_{\tilde{S}}h_n - \mcl_S h_n\|_{1,1}$.
Since $h_n\in C^r$, we have that    $\lim\sup_{d_{C^{r+1}}(\tilde{S},S)\to 0}\| \mcl_{\tilde{S}}h - \mcl_S h\|_{1,1} \leq  \frac{2\ep}{3}$.
Since the choice of $\ep>0$ is arbitrary, the result follows.

 \subsection{Quasi-compactness of the cocycle $\mathcal{L}$ and existence of Oseledets splitting}
 \label{sect:qc0}

 We may apply Kingman's subadditive ergodic theorem to form the following limits, which are constant for $\mathbb{P}$-a.e.\ $\omega\in\Omega$:
 $$\Lambda(\mathcal R):=\lim_{n\to\infty}\frac{1}{n}\log\|\mathcal L^{(n)}_\omega\|_{1,1},\mbox{ and}$$
 $$\kappa(\mathcal R):=\lim_{n\to\infty}\frac{1}{n}\log ic(\mathcal L^{(n)}_\omega), \quad \text{where}$$
 $$ic(A):=\inf\{r>0: A(B_{\mathcal{B}_{1,1}})\mbox{ can be covered with finitely many balls of radius } r\},$$
 and $B_{\mathcal{B}_{1,1}}$ is the unit ball in $\mathcal{B}_{1,1}$.
 The cocycle $\mathcal{R}$ is called \emph{quasi-compact} if $\Lambda(\mathcal R)>\kappa(\mathcal R)$.

For each $\om \in \Om, n\in \N$, let $\mcl_\om^{(n)} := \mcl_{\sig^{n-1}\om} \circ \dots \circ \mcl_{\sig\om}\circ \mcl_\om$.
It follows readily from~\eqref{dec_cont} that
\begin{equation}\label{DEC_cont}
 \lVert \mcl_\om^{(n)} h\rVert_{1,1} \le De^{-\lambda n} \lVert h\rVert_{1,1} \quad \text{for any $\om \in \Om$, $n\in \mathbb N$ and $h\in \mathcal B^{1,1}$, $h(1)=0$.}
\end{equation}
After possibly decreasing $\ep$ we can also assume\footnote{See the discussion at the beginning of \S7 in \cite{GL}.} that there exist $a\in (0, 1)$ and $B, K>0$ such that for every $\om \in \Om$, $n\in \mathbb N$ and $h\in \mathcal B^{1,1}$,
\begin{equation}\label{wsly_cont}
 \lVert \mcl_\om^{(n)} h\rVert_{0,2} \le B\lVert h\rVert_{0, 2}, \quad   \lVert \mcl_\om^{(n)} h\rVert_{1,1} \le Ba^n\lVert h\rVert_{1,1}+B\lVert h\rVert_{0,2},
\end{equation}
which in particular implies that
\begin{equation}\label{ub_cont}
 \lVert \mcl_\om h\rVert_{1,1} \le K\lVert h\rVert_{1,1},
\end{equation}
where $K:=Ba+B>0$.
By  \cite[Lemma 2.1]{DFGTV}, the inequalities (\ref{wsly_cont}) and (\ref{ub_cont}) imply that the cocycle $\mathcal{R}$ is quasi-compact.
By separability of $\mathcal{B}^{1,1}$, and quasi-compactness and strong measurability of $\mathcal{L}$, the multiplicative ergodic theorem (Theorem A, \cite{GTQuas1}) yields the existence of a measurable Oseledets splitting
$$\mathcal{B}^{1,1}=\left(\bigoplus_{j=1}^l Y_j(\omega)\right)\oplus V(\omega),$$
where each component of the splitting is equivariant under $\mathcal{L}_\omega$.
The $Y_j(\omega)$ are finite-dimensional and by $0=\lambda_1>\lambda_2>\ldots$ we denote the corresponding (finite or infinite) sequence of Lyapunov exponents.

\subsection{One-dimensionality of the top Oseledets space}
\label{sec:topspace}
\begin{proposition}\label{prop:uniqueAcim}
There exists a unique family  $(h_\om^0)_{\om \in \Om} \subset \mathcal B^{1,1}$ such that:
 \begin{enumerate}
 \item $\mcl_\om h_\om^0=h_{\sigma \om}^0$ for  \paeom;
 \item $h_\om^0$ is nonnegative and $h_\om^0(1)=1$ for \paeom;
 \item $\om \to h_\om^0$ is a measurable map from $\Om$ to $\mathcal B^{1,1}$;
 \item \begin{equation}\label{b}\esssup_{\om \in \Om} \lVert h_\om^0\rVert_{1,1} <\infty.
     \end{equation}
 \end{enumerate}
\end{proposition}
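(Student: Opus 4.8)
The plan is to extract the family $(h_\om^0)_{\om\in\Om}$ from the top Oseledets space of the cocycle $\mcl$, using the fact that the leading Lyapunov exponent is $\lambda_1=0$ and that the decay estimate \eqref{DEC_cont} forces the top space to carry nonnegative ``densities''. First I would show that the top Oseledets space $Y_1(\om)$ is one-dimensional. Indeed, by \eqref{DEC_cont}, on the hyperplane $\{h : h(1)=0\}$ the cocycle contracts at exponential rate $-\lam<0$, so every Lyapunov exponent strictly less than $\lam_1=0$ lives inside (a subspace of) this hyperplane; conversely any $h$ with $h(1)\neq 0$ has $\|\mcl_\om^{(n)}h\|_{1,1}$ bounded below (since $(\mcl_\om^{(n)}h)(1)=h(1)$ and \eqref{dist_cont} gives $|h(1)|\le C\|\mcl_\om^{(n)}h\|_{1,1}$ with $q=1$, using that $1\in C^q$), so $\lam_1=0$ is attained exactly by vectors outside the hyperplane. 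Since $\mathcal B^{1,1}=\big(\bigoplus_j Y_j(\om)\big)\oplus V(\om)$ and the hyperplane $\{h(1)=0\}$ has codimension one and contains $\big(\bigoplus_{j\ge 2}Y_j(\om)\big)\oplus V(\om)$, it follows that $\dim Y_1(\om)=1$.

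Next, fix a measurable choice of unit vector $v_\om$ spanning $Y_1(\om)$ (this exists by measurability of the Oseledets splitting), so that $\mcl_\om v_\om=\al(\om) v_{\sig\om}$ for some measurable scalar cocycle $\al(\om)$ with $\frac1n\sum_{i=0}^{n-1}\log|\al(\sig^i\om)|\to 0$. I claim $v_\om(1)\neq 0$: otherwise $v_\om$ would lie in the invariant hyperplane $\{h(1)=0\}$, contradicting that $Y_1(\om)$ is complementary to $\big(\bigoplus_{j\ge2}Y_j(\om)\big)\oplus V(\om)$ together with the characterization of $\lam_1=0$ above. Normalizing, set
\[
h_\om^0:=\frac{v_\om}{v_\om(1)}.
\]
Then $h_\om^0(1)=1$ automatically, and applying $\mcl_\om$ and using $(\mcl_\om h_\om^0)(1)=h_\om^0(1)=1$ forces $\mcl_\om h_\om^0=h_{\sig\om}^0$, giving (1) and (2)'s normalization; measurability (3) is inherited from that of $\om\mapsto v_\om$. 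For the essential boundedness (4): writing $h_\om^0=\mcl_{\sig^{-n}\om}^{(n)}h_{\sig^{-n}\om}^0$ and decomposing $h_{\sig^{-n}\om}^0=h_{\sig^{-n}\om}^0(1)\cdot h_{\sig^{-n}\om}^0/\,(\cdots)$... more directly, apply the second inequality in \eqref{wsly_cont} to $h_\om^0 = \mcl^{(n)}_{\sig^{-n}\om} h^0_{\sig^{-n}\om}$ to get $\|h_\om^0\|_{1,1}\le Ba^n\|h^0_{\sig^{-n}\om}\|_{1,1}+B\|h^0_{\sig^{-n}\om}\|_{0,2}$, and use \eqref{dist_cont} (with the $0,2$-norm, $q=2$) together with $h^0_{\sig^{-n}\om}(1)=1$ and $h^0_{\sig^{-n}\om}\ge 0$ to bound $\|h^0_{\sig^{-n}\om}\|_{0,2}$ by a constant independent of $\om$ and $n$ (nonnegative distributions of order $q$ with unit mass have controlled $\|\cdot\|_{0,q}$); letting $n\to\infty$ kills the first term and yields \eqref{b}.

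The nonnegativity in (2) is the step I expect to be the main obstacle, since $v_\om$ a priori is only defined up to sign and up to the measurable choice. The plan is to use exactness of the limiting operator: by the remark before \S\ref{sec:InitialCocycle}, $\mcl_T$ is exact on $\{h(1)=0\}$, and more usefully the perturbed operators satisfy \eqref{dec_cont}, so for any fixed $C^r$ density $\varrho\ge 0$ with $\varrho(1)=1$ one has $\|\mcl^{(n)}_{\sig^{-n}\om}\varrho - h_\om^0\|_{1,1}\to 0$ (since $\varrho - h^0_{\sig^{-n}\om}$ lies in the contracted hyperplane and $h^0_{\sig^{-n}\om}$ is pushed forward equivariantly): here I would need the uniform bound \eqref{b} already in hand, or argue via a Cauchy/pullback limit construction that simultaneously produces $h_\om^0$ and its nonnegativity. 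Because each $\mcl^{(n)}_{\sig^{-n}\om}\varrho$ is the transfer operator image of a nonnegative $C^r$ function, it is nonnegative in the sense defined after \eqref{dcont}, and nonnegativity is preserved under $\|\cdot\|_{1,1}$-limits (evaluating against fixed $\varphi\ge0$ in $C^q$ and using \eqref{dist_cont}), so $h_\om^0\ge 0$. This pullback-limit construction also gives a clean route to \emph{uniqueness}: if $(\tld h_\om)$ is another such family, then $h_\om^0-\tld h_\om\in\{h(1)=0\}$ is equivariant, hence $\|h_\om^0-\tld h_\om\|_{1,1}=\|\mcl^{(n)}_{\sig^{-n}\om}(h^0_{\sig^{-n}\om}-\tld h_{\sig^{-n}\om})\|_{1,1}\le De^{-\lam n}\cdot 2\esssup\|\cdot\|_{1,1}\to 0$, so the two families coincide $\bbp$-a.e.
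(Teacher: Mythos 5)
Your route is genuinely different from the paper's: you try to extract $h_\om^0$ from the top Oseledets space (proving $\dim Y_1(\om)=1$ first via the equivariant codimension-one hyperplane $\{h:h(1)=0\}$, which is a correct and nice argument), whereas the paper never touches the Oseledets splitting here — it runs a Banach fixed-point argument for the operator $(\mathbb L v)(\om)=\mcl_{\sig^{-1}\om}v(\sig^{-1}\om)$ on the closed set $Z$ of measurable, essentially bounded, nonnegative, mass-one families, so that equivariance, nonnegativity, normalization, measurability and the uniform bound \eqref{b} all come out simultaneously, and uniqueness is uniqueness of the fixed point. Your uniqueness argument via \eqref{dec_cont} is fine and is essentially the paper's.

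However, your primary route has a genuine gap at item (4). You propose to bound $\lVert h^0_{\sig^{-n}\om}\rVert_{0,2}$ uniformly using \eqref{dist_cont} together with nonnegativity and $h^0(1)=1$. This fails for two reasons. First, \eqref{dist_cont} goes the wrong way: it bounds the distributional pairing $|h(\varphi)|$ \emph{by} $\lVert h\rVert_{p,q}$, not the other way around. Second, and more fundamentally, the claim that a nonnegative element of unit total mass has controlled $\lVert\cdot\rVert_{0,q}$ norm is false for these anisotropic norms: by \eqref{auxnorms} the norm is a supremum of integrals along $d_s$-dimensional admissible leaves, and a nonnegative density concentrating unit mass on a shrinking neighbourhood of a single stable leaf has bounded global pairings but divergent leaf integrals. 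There is no inequality of the form $\lVert h\rVert_{0,2}\le C\,h(1)$ for $h\ge 0$. This leaves \eqref{b} unproved, and since your nonnegativity argument (the pullback of a fixed density $\varrho$) explicitly requires \eqref{b} to control $\lVert \varrho-h^0_{\sig^{-n}\om}\rVert_{1,1}$, items (2) and (4) end up resting circularly on each other. The fix is the alternative you mention in passing: run the pullback construction from scratch, setting $f_n(\om):=\mcl^{(n)}_{\sig^{-n}\om}\varrho$, using \eqref{wsly_cont} applied to the fixed smooth $\varrho$ (not to the unknown $h^0$) to get $\sup_{n,\om}\lVert f_n(\om)\rVert_{1,1}<\infty$, and \eqref{dec_cont} to get uniform Cauchyness; the limit then carries nonnegativity, normalization, equivariance, measurability and \eqref{b} all at once. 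That is exactly the paper's proof in disguise, and once you have it the entire Oseledets detour becomes unnecessary for this proposition (indeed the paper derives one-dimensionality of $Y_1(\om)$ \emph{from} $h^0_\om$, not the reverse).
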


\begin{proof}
 Let
 \[
  Y=\{ v\colon \Om \to \mathcal B^{1,1}: \ \text{$v$ is measurable and $\lVert v\rVert_\infty:=\esssup_{\om \in \Om} \lVert v(\om)\rVert_{1,1} <\infty$} \}.
 \]
Then, $Y=(Y, \lVert \cdot\rVert_\infty)$ is a Banach space. Furthermore, let $Z$ be the subset of $Y$ that consists of $v\in Y$ with the property that $v(\om)$ is nonnegative
and $v(\om)(1)=1$ for \paeom.  It is easy to verify that $Z$ is a closed subset of $Y$. Indeed, assume that $(v_n)_{n\in \Z}$ is a sequence in $Z$ converging to $v\in Y$. It follows from~\eqref{dist_cont} that
\[
 \lvert v(\om)(\varphi)-v_n(\om)(\varphi)\rvert \le C\lVert v_n(\om)-v(\om)\rVert_{1,1} \lvert \varphi \rvert_{C^1}\le C\lVert v_n-v\rVert_\infty \lvert \varphi \rvert_{C^1},
\]
and thus $v_n(\om)(\varphi) \to v(\om)(\varphi)$ for $\varphi \in C^1$ and \paeom. Thus, $v(\om)(\varphi)\ge 0$ for $\varphi \ge 0$ and $v(\om)(1)=1$ for \paeom \ and we conclude that $v\in Y$.

We define $\mathbb L \colon Z \to Z$ by
\[
 (\mathbb L v)(\om)=\mcl_{\sigma^{-1} \om}v(\sigma^{-1} \om) \quad \om \in \Om, \ v\in Z.
\]
It follows from \eqref{aux_cont} and~\eqref{ub_cont} that $\mathbb L$ is a well-defined and continuous map on $Z$. Using~\eqref{DEC_cont}, one can easily verify (see~\cite[Proposition 1]{DFGTV2}) that there exists $n_0\in
\mathbb N$ such that $\mathbb L^{n_0}$ is a contraction on $Z$. Thus, $\mathbb L$ has a  unique fixed point $\bar{v}\in Z$. It is easy to verify that the family $h_\omega^0$, $\omega \in \Omega$ defined $h_\omega^0=\bar{v}(\omega)$, $\omega\in \Omega$
satisfies the desired properties. Conversely, each family satisfying properties (1)-(4) induces a fixed point of $\mathbb L$ which then  must coincide with $\bar{v}$.
\end{proof}

\begin{proposition}\label{pmeas}
Let $(h_\om^0)_{\om \in \Om}$ be as in Proposition~\ref{prop:uniqueAcim}. Then $h_\om^0$ is a probability measure on $\mathcal B^{1, 1}$  for \paeom.
\end{proposition}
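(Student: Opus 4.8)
The plan is to read the statement as the assertion that, for \paeom, the distribution $h_\om^0\in\mathcal B^{1,1}$ is represented by integration against a Borel probability measure on $X$: there is a probability measure $\mu_\om$ on $X$ with $h_\om^0(\varphi)=\int_X\varphi\,d\mu_\om$ for every $\varphi\in C^1(X,\mathbb C)$. I fix $\om$ in the full-$\bbp$-measure set on which conclusion (2) of Proposition~\ref{prop:uniqueAcim} holds, so that $h_\om^0\ge 0$ as an element of $\mathcal B^{1,1}$ and $h_\om^0(1)=1$; the whole argument is pointwise in $\om$, so no measurability enters. The strategy is: (i) show $h_\om^0$ restricts to a real-linear functional on $C^1(X,\mathbb R)$ that is nonnegative on nonnegative functions and bounded in the uniform norm; (ii) extend it by density to a positive bounded functional on $C^0(X,\mathbb R)$; (iii) invoke the Riesz--Markov representation theorem and recover the complex statement by linearity.

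For step (i): if $\varphi\in C^1(X,\mathbb R)$, then the $C^1$ functions $|\varphi|_{C^0}\mp\varphi$ are pointwise nonnegative, so nonnegativity of $h_\om^0$ together with linearity and $h_\om^0(|\varphi|_{C^0}\cdot 1)=|\varphi|_{C^0}$ gives $|\varphi|_{C^0}\mp h_\om^0(\varphi)\ge 0$. In particular $h_\om^0(\varphi)\in\mathbb R$ and $|h_\om^0(\varphi)|\le|\varphi|_{C^0}$. Hence $\varphi\mapsto h_\om^0(\varphi)$ is a real-linear functional on $C^1(X,\mathbb R)$, nonnegative on nonnegative functions, of operator norm $\le 1$ with respect to $|\cdot|_{C^0}$.

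For steps (ii)--(iii): $C^1(X,\mathbb R)$ is dense in $(C^0(X,\mathbb R),|\cdot|_{C^0})$ on the compact $C^\infty$ manifold $X$ (e.g.\ by mollification in charts and a partition of unity), so the functional extends uniquely to a bounded functional $\Lambda_\om$ on $C^0(X,\mathbb R)$. To check $\Lambda_\om$ is positive, take $\psi\in C^0(X,\mathbb R)$ with $\psi\ge 0$ and $\psi_n\in C^1(X,\mathbb R)$ with $|\psi_n-\psi|_{C^0}\to 0$; then $\psi_n+|\psi_n-\psi|_{C^0}\cdot 1\ge 0$ is $C^1$, so $h_\om^0(\psi_n)\ge -|\psi_n-\psi|_{C^0}$, and letting $n\to\infty$ yields $\Lambda_\om(\psi)\ge 0$. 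By Riesz--Markov (using compactness of $X$) there is a finite regular Borel measure $\mu_\om\ge 0$ with $\Lambda_\om=\int_X(\cdot)\,d\mu_\om$ on $C^0(X,\mathbb R)$, and $\mu_\om(X)=\Lambda_\om(1)=h_\om^0(1)=1$, so $\mu_\om$ is a probability measure. Finally, for $\varphi=\varphi_1+i\varphi_2\in C^1(X,\mathbb C)$, complex-linearity of $h_\om^0$ and of $\int_X(\cdot)\,d\mu_\om$ gives $h_\om^0(\varphi)=h_\om^0(\varphi_1)+i\,h_\om^0(\varphi_2)=\int_X\varphi\,d\mu_\om$, which is the claimed identification.

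There is no serious obstacle here; this is the random-parameter analogue of the deterministic fact underlying \cite[\S7]{GL} that the leading eigendistribution of the transfer operator on the anisotropic space is the SRB measure. The only points needing a little care are the reduction from complex to real test functions — handled above by first working on $C^1(X,\mathbb R)$ and then invoking linearity — and verifying that the uniform-closure extension $\Lambda_\om$ stays positive, for which the key step is approximating a nonnegative continuous function from below by the shifted $C^1$ approximants $\psi_n+|\psi_n-\psi|_{C^0}\cdot 1$.
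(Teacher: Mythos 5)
Your proof is correct, but it reaches the key estimate $\lvert h_\om^0(\varphi)\rvert \le C\lvert\varphi\rvert_{C^0}$ by a genuinely different route from the paper. The paper's argument is dynamical: it writes $h_\om^0(\varphi)=h_{\sigma^{-n}\om}^0(\varphi\circ T_{\sigma^{-n}\om}^{(n)})$ via equivariance and then follows \cite[Lemma~5.3]{DL}, exploiting that the anisotropic norm only tests $\varphi\circ T^{(n)}$ along (approximately) stable leaves, where its derivative is contracted by the dynamics, so that together with \eqref{dist_cont} and \eqref{b} one gets a bound by $\lvert\varphi\rvert_\infty$ alone. You instead use only the order structure: since $\lvert\varphi\rvert_{C^0}\cdot 1\mp\varphi$ are nonnegative $C^1$ functions and $h_\om^0\ge 0$ with $h_\om^0(1)=1$, the functional is automatically real on real test functions and of sup-norm $\le 1$ — the classical fact that a normalized positive functional on a function space containing the constants is sup-norm bounded. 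Your version is shorter and self-contained (it needs nothing beyond properties (1)–(2) of Proposition~\ref{prop:uniqueAcim}, and indeed does not even use equivariance), whereas the paper's dynamical argument is the one that generalizes when positivity of the fixed point is not available or is harder to establish (e.g.\ for the twisted cocycle). The remaining steps — density of $C^1$ in $C^0$ on the compact manifold, positivity of the extension via the shifted approximants $\psi_n+\lvert\psi_n-\psi\rvert_{C^0}\cdot 1$, Riesz–Markov, and total mass $1$ — are all carried out correctly and match the paper's conclusion.
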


\begin{proof}
Using Lemma~\ref{duallemma}, we have that
 \[
  h_\om^0(\varphi)=\mcl_{\sigma^{-n}\om}^{(n)} h_{\sigma^{-n} \om}^0(\varphi)=h_{\sigma^{-n} \om}^0(\varphi \circ T_{\sigma^{-n} \om}^{(n)}), \quad \text{for $\om \in \Om$ and $\varphi \in C^1$.}
 \]
Hence, using the arguments as in~\cite[Lemma 5.3]{DL}, and equations \eqref{dist_cont} and~\eqref{b}, we find that there exists a constant $D>0$ such that
\[
 \lvert h_\om^0(\varphi)\rvert \le D\lvert \varphi \rvert_\infty \quad \text{for \paeom \ and $\varphi \in C^1$}.
\]
Since $C^1$ is dense in $C^0$, we conclude that for \paeom, \ $h_\om^0$ can be extended to a bounded linear functional on $C^0$. By the Riesz representation theorem,
$h_\om^0$ is a signed measure. By invoking the nonnegativity of $h_\om^0$ together with $h_\om^0(1)=1$, we conclude that $h_\om^0$ is a probability measure for \paeom.
\end{proof}

\begin{proposition}
The top Oseledets space $Y_1(\om)$ of the cocycle $\mcl$ is one-dimensional, and spanned by $h_\om^0$.
\end{proposition}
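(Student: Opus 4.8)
The plan is to leverage the uniform exponential decay estimate \eqref{DEC_cont}, which is valid on the closed hyperplane $H:=\{h\in\mathcal B^{1,1}:h(1)=0\}$, to control $\dim Y_1(\om)$ and then to locate $h_\om^0$ inside the top Oseledets space. The argument runs parallel to the corresponding step in \cite{DFGTV}.

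First I would record that $H$ is a closed, $\mcl$-equivariant subspace of codimension one: by \eqref{dist_cont} the linear functional $h\mapsto h(1)$ is bounded, and it is nonzero since $h_\om^0(1)=1$, so $H$ has codimension one; and by \eqref{aux_cont} we have $(\mcl_\om h)(1)=h(1\circ T_\om)=h(1)$, so $\mcl_\om H\subseteq H$. Next, \eqref{DEC_cont} gives $\|\mcl_\om^{(n)}h\|_{1,1}\le De^{-\lam n}\|h\|_{1,1}$ for all $h\in H$, so every nonzero vector of $H$ has Lyapunov exponent at most $-\lam<0$. Since a nonzero vector of $Y_1(\om)$ has Lyapunov exponent exactly equal to the top exponent $0$, this forces $Y_1(\om)\cap H=\{0\}$; hence the natural injection $Y_1(\om)\hookrightarrow\mathcal B^{1,1}/H$ shows $\dim Y_1(\om)\le 1$, and since $Y_1(\om)\neq\{0\}$ we conclude $\dim Y_1(\om)=1$. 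Moreover, the fast Oseledets subspace $F(\om):=\bigoplus_{j\ge 2}Y_j(\om)\oplus V(\om)$ is precisely the set of vectors with strictly negative Lyapunov exponent (as can be seen from temperedness of the Oseledets projections, or from the forward characterisation $F(\om)=\{v\in\mathcal B^{1,1}:\limsup_n \tfrac1n\log\|\mcl_\om^{(n)}v\|_{1,1}<0\}$), so the exponential decay on $H$ gives $H\subseteq F(\om)$; comparing codimensions of the nested closed subspaces $H\subseteq F(\om)\subsetneq\mathcal B^{1,1}$ then forces $F(\om)=H$.

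It remains to show $h_\om^0\in Y_1(\om)$; combined with $\dim Y_1(\om)=1$ and $h_\om^0\neq 0$ (indeed $h_\om^0(1)=1$), this yields $Y_1(\om)=\mathbb C\,h_\om^0$, as claimed. Let $P_\om$ be the (equivariant, tempered) Oseledets projection onto $Y_1(\om)$ along $F(\om)=H$, and decompose $h_\om^0=a_\om+b_\om$ with $a_\om:=P_\om h_\om^0\in Y_1(\om)$ and $b_\om:=(I-P_\om)h_\om^0\in H$. Equivariance of $P_\om$ together with property (1) of Proposition~\ref{prop:uniqueAcim} gives $b_\om=\mcl_{\sig^{-n}\om}^{(n)}b_{\sig^{-n}\om}$ for every $n\in\N$, and since $b_{\sig^{-n}\om}\in F(\sig^{-n}\om)=H$ we may apply \eqref{DEC_cont} and then \eqref{b} to get
\[
\|b_\om\|_{1,1}\le De^{-\lam n}\|b_{\sig^{-n}\om}\|_{1,1}\le De^{-\lam n}\,\|I-P_{\sig^{-n}\om}\|\cdot\esssup_{\om\in\Om}\|h_\om^0\|_{1,1}.
\]
By temperedness of $\om\mapsto P_\om$ the right-hand side is bounded by $Ce^{(-\lam+o(1))n}\to 0$ as $n\to\infty$, so $b_\om=0$ and hence $h_\om^0=a_\om\in Y_1(\om)$.

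The delicate point is this last step. Knowing only that $h_\om^0$ has Lyapunov exponent $0$ would merely guarantee that $h_\om^0$ has a nonzero $Y_1(\om)$-component; to place $h_\om^0$ \emph{in} the (backward-measurable) top Oseledets space one must exploit that the equivariant family $(h_\om^0)_{\om\in\Om}$ is \emph{uniformly} bounded via \eqref{b} and run the contraction \eqref{DEC_cont} backwards along the $\sig$-orbit, which is where temperedness of the Oseledets projections enters. The more routine points are the codimension count identifying $F(\om)=H$, and the observation that a closed subspace on which the cocycle contracts exponentially must be disjoint from $Y_1(\om)$ and hence contained in $F(\om)$.
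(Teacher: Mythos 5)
Your proof is correct, and it rests on the same ingredients as the paper's argument --- the uniform contraction \eqref{DEC_cont} on the equivariant, codimension-one hyperplane $H=\{h:h(1)=0\}$, the normalisation $1=h_\om^0(1)\le C\lVert h_\om^0\rVert_{1,1}$ coming from \eqref{dist_cont}, and the uniform bound \eqref{b} --- but it organises them differently at the one delicate point. The paper computes the \emph{forward} Lyapunov exponent of $h_\om^0$ (it equals $0$) and from this concludes $h_\om^0\in Y_1(\om)$; as you correctly observe, a forward exponent equal to $\lambda_1$ only places a vector outside the slow filtration space $\bigoplus_{j\ge2}Y_j(\om)\oplus V(\om)$, i.e.\ gives it a nonzero $Y_1$-component, so the paper is implicitly relying on the backward characterisation of the top Oseledets space via the bounded equivariant backward orbit $h^0_{\sigma^{-n}\om}$. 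You make this explicit and close it with the projection-plus-temperedness argument, which is a legitimate and more self-contained route; your identification $F(\om)=H$ by a codimension count is the structural fact the paper never states, and your derivation of $\dim Y_1(\om)=1$ from $Y_1(\om)\cap H=\{0\}$ is logically the same as the paper's contradiction argument with the combination $\alpha h_\om^0+\beta g_\om$ annihilating $1$. Two minor points to tidy: you assert that the top exponent is $0$ without proof --- the upper bound follows from \eqref{wsly_cont} and the lower bound from exactly the estimate $1=h_\om^0(1)\le C\lVert h_\om^0\rVert_{1,1}$ that you mention only at the end, so this should be stated before you use it to conclude $Y_1(\om)\cap H=\{0\}$; and you should note explicitly that the multiplicative ergodic theorem being invoked (Theorem A of \cite{GTQuas1}) supplies temperedness of the Oseledets projections along the backward orbit, which it does.
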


\begin{proof}
Proposition~\ref{prop:uniqueAcim} and \eqref{wsly_cont} imply that
the top Lyapunov exponent of the cocycle $\mcl$ is equal to $0$. Furthermore, it follows from~\eqref{b} that
 \[
  \lim_{n\to \infty}\frac 1 n \log \lVert \mcl_\om^{(n)} h_\om^0 \rVert_{1,1} =\lim_{n\to \infty}\frac 1 n \log \lVert h_{\sigma^n \om}^0 \rVert_{1,1}\le 0 \quad \text{for \paeom.}
 \]
On the other hand, it follows from~\eqref{dist_cont} that $1=h_\om^0 (1)\le C\lVert h_\om^0\rVert_{1,1}$ for \paeom. Thus,
\[
 \lim_{n\to \infty}\frac 1 n \log \lVert \mcl_\om^{(n)} h_\om^0 \rVert_{1,1} =\lim_{n\to \infty}\frac 1 n \log \lVert h_{\sigma^n \om}^0 \rVert_{1,1}\ge \lim_{n\to \infty}\frac 1 n \log C^{-1}=0
 \quad \text{for \paeom.}
\]
We conclude that $h_\om^0 \in Y_1(\om)$ for \paeom.
We now claim that $h_\om^0$ spans $Y_1(\om)$ for \paeom. Indeed, assume that there exists $g_\om \notin span \{h_\om^0\}$, $g_\om \in Y_1(\om)$ and choose
$\alpha, \beta$ scalars (that depend on $\om$) such that $\lvert \alpha \rvert+\lvert \beta \rvert >0$ and $(\alpha h_\om^0+\beta g_\om)(1)=0$. Then, it follows from~\eqref{DEC_cont} that
\[
 \lim_{n\to \infty}\frac 1 n \log \lVert \mcl_\om^{(n)}(\alpha h_\om^0+\beta g_\om)\rVert_{1,1} \le -\lambda <0.
\]
On the other hand, since $\alpha h_\om^0+\beta g_\om \in Y_1(\om) \setminus \{0\}$ we have
\[
 \lim_{n\to \infty}\frac 1 n \log \lVert \mcl_\om^{(n)}(\alpha h_\om^0+\beta g_\om)\rVert_{1,1}=0,
\]
which yields a contradiction. We conclude that $Y_1(\om)=span \{h_\om^0\}$ and thus $Y_1(\om)$ is one-dimensional for \paeom.

\end{proof}
\section{Quasi-compactness of the twisted cocycle $\mcl^\theta$}
We build a twisted cocycle $\mathcal{L}^\theta$, by setting \[\mcl_\om^{\theta} (h)=\mcl_\om(e^{\theta g(\om, \cdot)} \cdot h), \quad  \text{for $\om \in \Om$, $\theta \in \mathbb C$, and $h\in \mathcal B^{1,1}$.} \]
We will from now write $e^{\theta g(\om, \cdot)} h$ instead of $e^{\theta g(\om, \cdot)} \cdot h$.
Our (centered) observable $g$ will be a map $g\colon \Omega \times X \to \mathbb R$ such that $g(\om, \cdot)\in C^r$ for $\om \in \Om$,
\begin{equation}\label{obs_cont}
 \esssup_{\om \in \Om} \lVert g(\om, \cdot)\rVert_{C^r} <\infty,
\end{equation}
and for \paeom,
\begin{equation}\label{zeromean_cont}
h_\om^0 (g(\om, \cdot))=0.
\end{equation}

This twisted cocycle gives us access to an $\omega$-wise moment-generating function for Birkhoff sums of $g$.
\begin{lemma}
\label{dualtwist}
For $\mathbb{P}$-a.e.\ $\omega\in\Omega$, $h\in \mathcal{B}^{1,1}$ and $\varphi \in C^1(X, \mathbb C)$ one has
\begin{equation}
\label{coding}
  (\mathcal{L}_\omega^{\theta,(n)}h)(\varphi)=h(e^{\theta S_ng(\om, \cdot)}(\varphi \circ T_\omega^n)).
\end{equation}
\end{lemma}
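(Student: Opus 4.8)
The plan is to prove \eqref{coding} by induction on $n\in\N$ for a fixed $\om\in\Om$; since the argument uses no null sets it in fact yields \eqref{coding} for every $\om\in\Om$ (the conditions "$g(\om,\cdot)\in C^r$ for $\om\in\Om$" and $T_\om\in\mc M_\ep(T)$ hold without exception, and \eqref{aux_cont}, \eqref{1145} are unconditional), which in particular covers \paeom. At heart, \eqref{coding} is merely the ``transpose'' of the cocycle (chain) relation for Birkhoff sums, combined with the defining relation \eqref{aux_cont} of $\mcl_\om$ and the identity \eqref{1145} describing the product of a $C^r$ function with an element of $\mc B^{1,1}$. For the base case $n=1$ I would simply unwind the definitions: for $\varphi\in C^1(X,\C)$,
\[
(\mcl_\om^\theta h)(\varphi)=\mcl_\om\big(e^{\theta g(\om,\cdot)}h\big)(\varphi)=\big(e^{\theta g(\om,\cdot)}h\big)(\varphi\circ T_\om)=h\big(e^{\theta g(\om,\cdot)}(\varphi\circ T_\om)\big),
\]
where the middle equality is \eqref{aux_cont} and the last is \eqref{1145} (legitimate since $e^{\theta g(\om,\cdot)}\in C^r$, because $g(\om,\cdot)\in C^r$, and $\varphi\circ T_\om\in C^1$); as $S_1g(\om,\cdot)=g(\om,\cdot)$ and $T_\om^{(1)}=T_\om$, this is \eqref{coding} for $n=1$.

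For the inductive step, assuming \eqref{coding} for $n$, I would write $\mcl_\om^{\theta,(n+1)}=\mcl_{\sig\om}^{\theta,(n)}\circ\mcl_\om^\theta$ and apply the inductive hypothesis at $\sig\om$ to the element $\mcl_\om^\theta h\in\mc B^{1,1}$ and the test function $\varphi$, obtaining $\big(\mcl_\om^{\theta,(n+1)}h\big)(\varphi)=(\mcl_\om^\theta h)(\psi)$ with $\psi:=e^{\theta S_ng(\sig\om,\cdot)}(\varphi\circ T_{\sig\om}^{(n)})\in C^1$. Repeating the base-case manipulation gives $(\mcl_\om^\theta h)(\psi)=h\big(e^{\theta g(\om,\cdot)}(\psi\circ T_\om)\big)$, and it then only remains to identify the test function: from $\psi\circ T_\om=e^{\theta S_ng(\sig\om,\,T_\om\cdot)}(\varphi\circ T_{\sig\om}^{(n)}\circ T_\om)$, the relation $T_{\sig\om}^{(n)}\circ T_\om=T_\om^{(n+1)}$, and the Birkhoff cocycle identity $g(\om,x)+S_ng(\sig\om,T_\om x)=S_{n+1}g(\om,x)$ (immediate from \eqref{birkhoffsums} together with $T_{\sig\om}^{(i)}\circ T_\om=T_\om^{(i+1)}$ and $T_\om^{(0)}=\Id$), the exponentials combine pointwise to give $e^{\theta g(\om,\cdot)}(\psi\circ T_\om)=e^{\theta S_{n+1}g(\om,\cdot)}(\varphi\circ T_\om^{(n+1)})$, so that $\big(\mcl_\om^{\theta,(n+1)}h\big)(\varphi)=h\big(e^{\theta S_{n+1}g(\om,\cdot)}(\varphi\circ T_\om^{(n+1)})\big)$, closing the induction.

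The computation is essentially bookkeeping, so I do not expect a genuine obstacle; the points that require attention are: (i) checking at each stage that the functions to which the distributions $\mcl_\om^\theta h$ and $h$ are applied remain of class $C^1$, so that \eqref{aux_cont} and \eqref{1145} are legitimately invoked — this relies on $g(\om,\cdot)\in C^r$ and the $C^{r+1}$-smoothness of the maps $T_\om$ (which makes $S_ng(\sig\om,\cdot)\in C^r$ and $\varphi\circ T_{\sig\om}^{(n)}\in C^1$); (ii) noting that, once \eqref{1145} has been used, the remaining products (such as $e^{\theta g(\om,\cdot)}(\psi\circ T_\om)$) are ordinary pointwise products of $C^1$ functions and so may be rearranged freely; and (iii) the cocycle identity for Birkhoff sums used in the final step. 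The mild ``hard part'', if any, is simply keeping the various compositions and products organized so that the telescoping of the exponents is transparent.
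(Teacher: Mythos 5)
Your proof is correct and is essentially the argument the paper intends: the paper's one-line proof defers to \cite[Lemma 3.3(2)]{DFGTV} together with \eqref{aux_cont} and Lemma \ref{duallemma}, which amounts to the same induction you carry out explicitly (your use of \eqref{1145} after \eqref{aux_cont} is just Lemma \ref{duallemma} unwound at the level of test functions). The regularity checks you flag in (i)--(iii) are exactly the points that need attention, and they all go through since $g(\om,\cdot)\in C^r$ and $T_\om\in C^{r+1}$.
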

\begin{proof}
One can follow the proof of Lemma 3.3~(part 2)~\cite{DFGTV}, using the definition of the untwisted transfer operator (\ref{aux_cont}) and Lemma \ref{duallemma}.
\end{proof}
The following lemma is required as an auxiliary result in the proof of quasi-compactness of the twisted cocycle (Proposition \ref{twistQC}).
\begin{lemma}\label{tecl}
For $\theta_1, \theta_2 \in B_{\mathbb C}(0, 1):=\{\theta \in \mathbb C: |\theta|<1\}$,
 we have that
 \[
  \esssup_{\om \in \Om} \lVert e^{\theta_1 g(\sigma^{-1} \om, \cdot)}-e^{\theta_2 g(\sigma^{-1} \om, \cdot)}\rVert_{C^2} \le C\lvert \theta_1-\theta_2\rvert.
 \]
\end{lemma}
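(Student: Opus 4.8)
The plan is to deduce the estimate from the uniform bound~\eqref{obs_cont} together with the elementary fact that $C^2(X)$ is a Banach algebra; the only point requiring attention is that every constant we produce is independent of $\om$. First I would put $M:=\esssup_{\om\in\Om}\lVert g(\om,\cdot)\rVert_{C^r}$, which is finite by~\eqref{obs_cont}; since $r>2$ this gives $\lVert g(\om,\cdot)\rVert_{C^2}\le M$ for \paeom, and since $\sig$ preserves $\bbp$ we also get $\esssup_{\om\in\Om}\lVert g(\sig^{-1}\om,\cdot)\rVert_{C^2}\le M$. So it is enough to prove the pointwise statement: there is $C=C(M)>0$ such that $\lVert e^{\theta_1 g}-e^{\theta_2 g}\rVert_{C^2}\le C\lvert\theta_1-\theta_2\rvert$ for every real $g\in C^2(X)$ with $\lVert g\rVert_{C^2}\le M$ and all $\theta_1,\theta_2\in B_{\C}(0,1)$; applying this to $g(\sig^{-1}\om,\cdot)$ and taking $\esssup$ over $\om$ then finishes the lemma.

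For the pointwise statement I would integrate along the segment from $\theta_2$ to $\theta_1$. Since $B_{\C}(0,1)$ is convex, $s_t:=\theta_2+t(\theta_1-\theta_2)$ lies in $B_{\C}(0,1)$ for all $t\in[0,1]$, and $s\mapsto e^{sg}$ is analytic from $\C$ into $C^2(X)$ with derivative $g\,e^{sg}$ (the power series $\sum_n s^n g^n/n!$ converges in $C^2$ by the Banach-algebra property); hence, in $C^2(X)$,
\[
e^{\theta_1 g}-e^{\theta_2 g}=(\theta_1-\theta_2)\int_0^1 g\,e^{s_t g}\,dt ,
\]
so that $\lVert e^{\theta_1 g}-e^{\theta_2 g}\rVert_{C^2}\le \lvert\theta_1-\theta_2\rvert\sup_{0\le t\le 1}\lVert g\,e^{s_t g}\rVert_{C^2}$. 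It then remains to bound $\lVert g\,e^{sg}\rVert_{C^2}$ uniformly for $\lvert s\rvert\le 1$. By the Leibniz rule there is a constant $c_X$, depending only on $X$ and the fixed charts, with $\lVert fh\rVert_{C^2}\le c_X\lVert f\rVert_{C^2}\lVert h\rVert_{C^2}$, so $\lVert g\,e^{sg}\rVert_{C^2}\le c_X M\lVert e^{sg}\rVert_{C^2}$; and expanding $\partial^\alpha(e^{sg})$ for $\lvert\alpha\rvert\le 2$ by the chain rule as a polynomial in $s$ and the first two derivatives of $g$ times $e^{sg}$, together with $\lvert e^{sg(x)}\rvert=e^{\Re(s)g(x)}\le e^{\lvert s\rvert\lVert g\rVert_\infty}\le e^{M}$, yields $\lVert e^{sg}\rVert_{C^2}\le P(M)e^{M}$ for some universal polynomial $P$. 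Combining these gives the claim with $C:=c_X\,M\,P(M)\,e^{M}$, which depends only on $M$.

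I do not expect a genuine obstacle here: convexity of the disc, analyticity of $s\mapsto e^{sg}$ in $C^2$, and the Leibniz and chain rules are all standard, and the substance of the lemma is precisely the $\om$-uniformity, which is furnished by~\eqref{obs_cont} and the measure-preservation of $\sig$. Accordingly the write-up should be short; the only slightly tedious part is keeping the chain-rule bookkeeping explicit enough to see that the polynomial $P$ and the constant $c_X$ do not depend on $g$, hence not on $\om$.
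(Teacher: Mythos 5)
Your proof is correct, and it takes a somewhat different route from the paper's. The paper argues order by order: it applies the scalar mean value theorem to $z\mapsto e^{zg(\sigma^{-1}\om,x)}$ pointwise in $x$ to get the $C^0$ estimate, then expands $\partial^j(e^{\theta_1 g}-e^{\theta_2 g})$ by the product rule and telescopes, controlling one term directly by $\lvert\theta_1-\theta_2\rvert$ and the other by the already-established $C^0$ bound; the analogous (more tedious) computation for second derivatives is only sketched there with ``one can now proceed''. You instead apply the mean value inequality once, in integral form, in the Banach algebra $C^2(X)$: the identity $e^{\theta_1 g}-e^{\theta_2 g}=(\theta_1-\theta_2)\int_0^1 g\,e^{s_tg}\,dt$ reduces everything to a single uniform bound on $\lVert g\,e^{sg}\rVert_{C^2}$ for $\lvert s\rvert\le 1$, which you get from the submultiplicativity of the $C^2$ norm and a chain-rule expansion. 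What your version buys is that all derivative orders are handled at once with no telescoping, and the second-derivative case the paper leaves to the reader is subsumed automatically; the small price is that you must justify analyticity of $s\mapsto e^{sg}$ as a $C^2(X)$-valued map, which is standard given the Banach-algebra property. Both proofs correctly locate the substance of the lemma in the $\om$-uniformity supplied by \eqref{obs_cont} (together with invariance of $\bbp$ under $\sigma$), and your constant $C=C(M)$ is explicit. One could quibble that the Leibniz constant $c_X$ and the polynomial $P$ should be spelled out, but as you note this is pure bookkeeping and does not affect $\om$-independence.
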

\begin{proof}
 By applying the mean value theorem for the map $g(z)=e^{z g(\sigma^{-1} \om, x)}$, where $x\in X$ is fixed and using~\eqref{obs_cont}, we find that
 \begin{equation}\label{04}
  \esssup_{\om \in \Om}\lVert e^{\theta_1 g(\sigma^{-1} \om, \cdot)}-e^{\theta_2 g(\sigma^{-1} \om, \cdot)}\rVert_{C^0}\le C\lvert \theta_1-\theta_2\rvert.
 \end{equation}
Furthermore, for $j=1,\ldots,d$
\[
 \begin{split}
  \lVert \partial^j (e^{\theta_1 g(\sigma^{-1} \om, \cdot)}-e^{\theta_2 g(\sigma^{-1} \om, \cdot)})\rVert_{C^0} &=\lVert e^{\theta_1 g(\sigma^{-1} \om, \cdot)}\theta_1
  \partial^j g(\sigma^{-1}\om, \cdot)-e^{\theta_2 g(\sigma^{-1} \om, \cdot)}\theta_2
  \partial^j g(\sigma^{-1}\om, \cdot)\rVert_{C^0} \\
  &\le \lvert \theta_1-\theta_2\rvert \cdot \lVert e^{\theta_1 g(\sigma^{-1} \om, \cdot)}
  \partial^j g(\sigma^{-1}\om, \cdot)\rVert_{C^0} \\
  &\phantom{\le}+\lvert \theta_2\rvert \cdot \lVert e^{\theta_1 g(\sigma^{-1} \om, \cdot)}-e^{\theta_2 g(\sigma^{-1} \om, \cdot)}\rVert_{C^0}\cdot \lVert \partial^j g(\sigma^{-1}\om, \cdot)\rVert_{C^0}
 .\end{split}
\]
It now follows from~\eqref{obs_cont} and~\eqref{04} that
\[
 \esssup_{\om \in \Om} \lVert \partial^j (e^{\theta_1 g(\sigma^{-1} \om, \cdot)}-e^{\theta_2 g(\sigma^{-1} \om, \cdot)})\rVert_{C^0}\le C\lvert \theta_1-\theta_2\rvert.
\]
One can now proceed and obtain the same estimates for the second derivatives of the map $e^{\theta_1 g(\sigma^{-1} \om, \cdot)}-e^{\theta_2 g(\sigma^{-1} \om, \cdot)}$ which implies the desired
conclusion.
\end{proof}
We need the following basic regularity result for the operators $\mathcal{L}^\theta_\omega$.
\begin{proposition}
\label{twistcty}
 There exists a continuous function $K\colon \mathbb C \to (0, \infty)$ such that
 \begin{equation}\label{ubt_cont}
  \lVert \mcl_\om^{\theta}h\rVert_{1,1} \le K(\theta)\lVert h\rVert_{1,1}, \quad \text{for $h\in \mathcal B^{1,1}$, $\theta \in \mathbb C$ and \paeom.}
 \end{equation}
\end{proposition}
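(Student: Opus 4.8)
The plan is to write $\mcl_\om^{\theta} h = \mcl_\om\big(e^{\theta g(\om, \cdot)} \cdot h\big)$ and to control the two operations separately: multiplication by $e^{\theta g(\om, \cdot)}$, then application of the untwisted transfer operator $\mcl_\om$. The second is already covered by \eqref{ub_cont}, which gives $\lVert \mcl_\om h \rVert_{1,1} \le K \lVert h \rVert_{1,1}$ with $K$ independent of $\om$ and of $\theta$; so everything reduces to bounding the operator norm of $h \mapsto e^{\theta g(\om, \cdot)} \cdot h$ on $\mathcal B^{1, 1}$ by a continuous function of $\theta$, uniformly in $\om$.

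The key ingredient is a multiplier estimate on the anisotropic space: there is a constant $C > 0$ such that
\[
\lVert m \cdot h \rVert_{1,1} \le C \lVert m \rVert_{C^2} \lVert h \rVert_{1,1} \qquad \text{for all } m \in C^r(X, \mathbb C) \text{ and } h \in \mathcal B^{1, 1}.
\]
By density of $C^r(X, \mathbb C)$ in $\mathcal B^{1, 1}$ and the construction of $m \cdot h$ preceding \eqref{1145}, it suffices to prove this for $h \in C^r$, where $m \cdot h = mh$ pointwise. For such $h$ one expands, inside the definition \eqref{auxnorms} with $p = q = 1$, the derivative $\partial^\alpha\big((mh) \circ \psi_i\big) = \partial^\alpha\big((m \circ \psi_i)(h \circ \psi_i)\big)$ by the Leibniz rule as $\sum_{\beta \le \alpha} \binom{\alpha}{\beta} \partial^\beta(m \circ \psi_i)\, \partial^{\alpha - \beta}(h \circ \psi_i)$, with $\lvert \alpha \rvert = 1$. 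In the term $\beta = 0$ one absorbs $(m \circ \psi_i) \circ (\Id, \chi)$ into the test function $\varphi$, which stays of class $C^1$ with norm at most a constant times $\lVert m \rVert_{C^1}$; in the term $\lvert \beta \rvert = 1$ one absorbs $\partial^\beta(m \circ \psi_i) \circ (\Id, \chi)$ into $\varphi$, which stays $C^1$ with norm at most a constant times $\lVert m \rVert_{C^2}$, and this last step is where the regularity $C^{p+q} = C^2$ of $m$ is needed. After renormalising the test functions, each term is bounded by $C \lVert m \rVert_{C^2}$ times a quantity of the form $\lVert h \rVert^{\sim}_{k, q'}$ with $0 \le k \le 1$, hence by $C \lVert m \rVert_{C^2} \lVert h \rVert_{1,1}$ by \eqref{norms}; the $k = 0$ part of $\lVert \cdot \rVert_{1,1}$ is treated in the same way. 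This is the kind of elementary Leibniz-rule bookkeeping carried out in \cite{GL}; I would cite the corresponding statement there or supply this short direct argument.

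It then remains to bound $\lVert e^{\theta g(\om, \cdot)} \rVert_{C^2}$ uniformly in $\om$. Setting $M := \esssup_{\om \in \Om} \lVert g(\om, \cdot) \rVert_{C^r}$, which is finite by \eqref{obs_cont}, and using $r > 2$ so that $\lVert \cdot \rVert_{C^2} \le \lVert \cdot \rVert_{C^r}$, the identities $\partial^j e^{\theta g} = \theta (\partial^j g) e^{\theta g}$ and $\partial^j \partial^k e^{\theta g} = \big(\theta^2 (\partial^j g)(\partial^k g) + \theta\, \partial^j \partial^k g\big) e^{\theta g}$, together with $\lvert e^{\theta g(\om, x)} \rvert \le e^{\lvert \theta \rvert M}$, give
\[
\esssup_{\om \in \Om} \big\lVert e^{\theta g(\om, \cdot)} \big\rVert_{C^2} \le \big(1 + \lvert \theta \rvert M + \lvert \theta \rvert^2 M^2\big) e^{\lvert \theta \rvert M} =: \widetilde K(\theta),
\]
which is continuous in $\theta$. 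Combining this with the multiplier estimate and \eqref{ub_cont} yields, for \paeom\ and every $h \in \mathcal B^{1, 1}$,
\[
\lVert \mcl_\om^{\theta} h \rVert_{1,1} = \big\lVert \mcl_\om\big(e^{\theta g(\om, \cdot)} h\big) \big\rVert_{1,1} \le K C\, \widetilde K(\theta)\, \lVert h \rVert_{1,1},
\]
so the proposition holds with $K(\theta) := K C\, \widetilde K(\theta)$, which is continuous and strictly positive.

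The main obstacle is the multiplier estimate on the anisotropic Banach space: verifying that multiplication by a $C^2$ function is bounded on $\mathcal B^{1, 1}$ with norm controlled by the $C^2$-norm of the multiplier. The only delicate point there is the derivative count, namely that absorbing up to $p$ derivatives of the multiplier into the $C^q$ test functions requires $m \in C^{p+q}$, hence $m \in C^2$ for $\mathcal B^{1, 1}$, which is guaranteed here because $g(\om, \cdot) \in C^r$ with $r > 2$. The remaining estimates are elementary.
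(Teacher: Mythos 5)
Your proposal is correct and follows essentially the same route as the paper: decompose $\mcl_\om^{\theta}h=\mcl_\om(e^{\theta g(\om,\cdot)}h)$, apply \eqref{ub_cont} to the untwisted operator, and bound the multiplication operator $h\mapsto e^{\theta g(\om,\cdot)}h$ on $\mathcal B^{1,1}$ by a Leibniz-rule estimate in the norms \eqref{auxnorms}--\eqref{norms} controlled by $\lVert e^{\theta g(\om,\cdot)}\rVert_{C^2}$, which \eqref{obs_cont} bounds by a continuous function of $\theta$ uniformly in $\om$. Your explicit formula for $\widetilde K(\theta)$ and the remark on the $C^{p+q}=C^2$ regularity count are slightly more detailed than the paper's presentation but match its argument exactly.
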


\begin{proof}
 We first note that it follows from~\eqref{ub_cont} that
 \[
  \lVert \mcl_\om^{\theta}h\rVert_{1,1}=\lVert \mcl_\om(e^{\theta g(\om, \cdot)}h)\rVert_{1,1} \le K \lVert e^{\theta g(\om, \cdot)}h\rVert_{1,1}, \quad \text{for $h\in \mathcal B^{1,1}$, $\theta \in \mathbb C$ and \paeom.}
 \]
Hence, we need to estimate $\lVert e^{\theta g(\om, \cdot)}h\rVert_{1,1}$. Note that by~\eqref{norms},
\[
 \lVert e^{\theta g(\om, \cdot)}h\rVert_{1,1}=\max \{ \lVert e^{\theta g(\om, \cdot)}h\rVert^{\sim}_{0,1}, \lVert e^{\theta g(\om, \cdot)}h\rVert^{\sim}_{1,2}\}.
\]
It follows easily from~\eqref{auxnorms} that
\[
 \lVert e^{\theta g(\om, \cdot)}h\rVert^{\sim}_{0,1} \le \big{(}\max_{1\le i\le N}\sup_{\substack{\chi \colon \overline{B}(x, A\delta)\to \mathbb R^{d_u} \\ \chi \in \Xi_i}}\lVert (e^{\theta g(\om, \cdot)}\circ \psi_i) \circ (\Id, \chi)\rVert_{C^1}\big{)}\cdot
 \lVert h\rVert^{\sim}_{0,1}
\]
and
\[
\begin{split}
 \lVert e^{\theta g(\om, \cdot)}h\rVert^{\sim}_{1,2} &\le \big{(}\max_{1\le i\le N}\sup_{\substack{\chi \colon \overline{B}(x, A\delta)\to \mathbb R^{d_u} \\ \chi \in \Xi_i}}\lVert (e^{\theta g(\om, \cdot)}\circ \psi_i) \circ (\Id, \chi)\rVert_{C^2}\big{)}\cdot
 \lVert h\rVert^\sim_{1,2}\\
 &\phantom{\le} + \bigg{(} \max_{\substack{1\le j\le d \\ 1\le i \le N}}\sup_{\substack{\chi \colon \overline{B}(x, A\delta)\to \mathbb R^{d_u} \\ \chi \in \Xi_i}}\lVert \big{[}\partial^j(e^{\theta g(\om, \cdot)}\circ \psi_i)]\circ (\Id, \chi)\rVert_{C^1}
 \bigg{)}\cdot \lVert h\rVert^\sim_{0,1},
 \end{split}
\]
which together with~\eqref{obs_cont} implies the desired conclusion.
\end{proof}

We can now state the main result of this section on quasi-compactness.
\begin{proposition}
\label{twistQC}
 For $\theta$ close to $0$, the cocycle $(\mcl_\om^\theta)_{\om \in \Om}$ is quasi-compact.
\end{proposition}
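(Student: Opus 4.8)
The plan is to transfer the quasi-compactness of the untwisted cocycle $\mcl$ (established in \S\ref{sect:qc0}) to the twisted cocycle $\mcl^\theta$ for $\theta$ in a small complex neighbourhood of $0$, by establishing uniform-in-$\om$ Lasota--Yorke inequalities for $\mcl^{\theta,(n)}_\om$ with constants that depend continuously on $\theta$ and reduce at $\theta=0$ to \eqref{wsly_cont}. Concretely, I would first fix $\theta\in B_{\C}(0,1)$ and write $\mcl^{\theta,(n)}_\om h = \mcl^{(n)}_\om(e^{\theta S_n g(\om,\cdot)} h)$, exactly as in the dual formula \eqref{coding}, so that the twisted cocycle is the untwisted cocycle precomposed with multiplication by the (uniformly $C^r$-bounded, by \eqref{obs_cont}) weight $e^{\theta S_n g(\om,\cdot)}$. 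The key point, as in \cite[\S3--4]{DFGTV}, is that this weight differs from $1$ by something of size $O(|\theta| n)$ in a suitable norm, but the weak norm $\|\cdot\|_{0,2}$ only sees $O(|\theta|)$-type errors after one application, so one can run a telescoping/perturbative argument.

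The key steps, in order, would be: (i) record the one-step bounds: from Proposition~\ref{twistcty} we have $\|\mcl^\theta_\om h\|_{1,1}\le K(\theta)\|h\|_{1,1}$ with $K$ continuous, and a parallel (easier) estimate $\|\mcl^\theta_\om h\|_{0,2}\le K'(\theta)\|h\|_{0,2}$ obtained exactly as in Proposition~\ref{twistcty} but using the $\|\cdot\|_{0,2}$ auxiliary norm (no derivatives of $h$, so the weight enters only through its $C^2$ norm, again controlled by \eqref{obs_cont}); (ii) estimate the difference $\|(\mcl^\theta_\om-\mcl_\om)h\|_{0,2}$ in the weak norm: by \eqref{coding}-type duality this is $\|\mcl_\om((e^{\theta g(\om,\cdot)}-1)h)\|_{0,2}\le B\|(e^{\theta g(\om,\cdot)}-1)h\|_{0,2}$, and since $\|e^{\theta g(\om,\cdot)}-1\|_{C^2}\le C|\theta|$ uniformly (Lemma~\ref{tecl} with $\theta_2=0$, or \eqref{04}), multiplication by $e^{\theta g(\om,\cdot)}-1$ has operator norm $\le C|\theta|$ on the relevant piece, so $\|(\mcl^\theta_\om-\mcl_\om)h\|_{0,2}\le C|\theta|\,\|h\|_{0,2}$; (iii) combine: writing $\mcl^{\theta,(n)}_\om - \mcl^{(n)}_\om$ as a telescoping sum $\sum_{k=0}^{n-1}\mcl^{\theta,(n-k-1)}_{\sigma^{k+1}\om}(\mcl^\theta_{\sigma^k\om}-\mcl_{\sigma^k\om})\mcl^{(k)}_\om$ and using (i), (ii), and the untwisted bound \eqref{wsly_cont} for the $\|\cdot\|_{0,2}$ norm of each factor, deduce $\|\mcl^{\theta,(n)}_\om h\|_{0,2}\le B'\|h\|_{0,2}$ with $B'$ independent of $n$ once $|\theta|$ is small enough (the geometric-type growth $K'(\theta)^n$ is tamed because each extra factor only contributes a weak-norm bound, not a growing one — this is precisely the mechanism of \cite[Lemma 4.1]{DFGTV}); (iv) feed this back into the strong-norm Lasota--Yorke: again using \eqref{coding} and the untwisted inequality \eqref{wsly_cont}, $\|\mcl^{\theta,(n)}_\om h\|_{1,1}\le B a^n\|e^{\theta S_n g}h\|_{1,1}+B\|e^{\theta S_n g}h\|_{0,2}$, and estimating $\|e^{\theta S_n g(\om,\cdot)}h\|_{1,1}$ via the product rule in the $\|\cdot\|_{1,1}$ norm (as in Proposition~\ref{twistcty}), one obtains $\|\mcl^{\theta,(n)}_\om h\|_{1,1}\le B a^n(1+C|\theta|n)^{?}\|h\|_{1,1}+ C(\theta,n)\|h\|_{0,2}$ — and here I would instead split the $n$ applications into blocks of fixed length $n_0$, apply the clean untwisted Lasota--Yorke across each block and Proposition~\ref{twistcty} (times $K(\theta)^{n_0}$, a fixed constant) within blocks, to get, for $|\theta|$ small enough that $K(\theta)^{n_0} B a^{n_0}<1$, a genuine two-norm inequality $\|\mcl^{\theta,(n)}_\om h\|_{1,1}\le \tilde B\tilde a^n\|h\|_{1,1}+\tilde B\|h\|_{0,2}$ uniformly in $\om$; (v) invoke \cite[Lemma 2.1]{DFGTV} (relative compactness of the $\|\cdot\|_{1,1}$-ball in $\|\cdot\|_{0,2}$, already used in \S\ref{sect:qc0}) exactly as for the untwisted cocycle to conclude $\kappa(\mcl^\theta)<\Lambda(\mcl^\theta)$, i.e.\ quasi-compactness.

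I expect the main obstacle to be step (iv): controlling the strong norm $\|\cdot\|_{1,1}$ of the twisted iterate uniformly in $n$, since the naive product-rule estimate for $\|e^{\theta S_n g(\om,\cdot)}h\|_{1,1}$ produces a factor growing polynomially (or worse) in $n$ from the derivatives of $S_n g$, which a priori are only bounded by $Cn$. The resolution — and the place where smallness of $|\theta|$ is genuinely used — is the block decomposition together with the fact that across a fixed-length block the weight $e^{\theta S_{n_0} g}$ has $C^2$-norm bounded by a fixed constant depending only on $n_0$ and $\esssup\|g\|_{C^r}$, so that the exponential contraction $a^{n_0}$ in the untwisted Lasota--Yorke \eqref{wsly_cont} beats the per-block multiplicative cost once $n_0$ is chosen large and then $|\theta|$ small; this is essentially the quenched analogue of the argument in \cite[\S4]{DFGTV}, and the anisotropic norms \eqref{auxnorms}--\eqref{norms} do not obstruct it because multiplication by a $C^2$ function is bounded on $\mathcal B^{1,1}$ with norm controlled by the $C^2$ norm of the multiplier, which is the content of the computation in the proof of Proposition~\ref{twistcty}. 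Once (iv) is in place, steps (i)--(iii) and (v) are routine adaptations of \cite{DFGTV}.
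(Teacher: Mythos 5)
Your overall strategy---perturbing the untwisted Lasota--Yorke inequality \eqref{wsly_cont} over a block of fixed length, using a telescoping identity together with the $O(|\theta|)$ one-step difference bound coming from Lemma~\ref{tecl}---is exactly the paper's approach: the paper fixes $N$ with $Ba^N<1$, writes $\mcl_\om^{\theta,(N)}-\mcl_\om^{(N)}=\sum_{j=0}^{N-1}\mcl_{\sigma^{N-j}\om}^{\theta,(j)}(\mcl_{\sigma^{N-1-j}\om}^{\theta}-\mcl_{\sigma^{N-1-j}\om})\mcl_\om^{(N-1-j)}$, bounds this in operator norm by $C_N|\theta|$, and so obtains the fixed-$N$ inequality \eqref{0320_cont}. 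However, two of your steps fail as stated. First, the uniform-in-$n$ weak bound of step (iii), $\|\mcl^{\theta,(n)}_\om h\|_{0,2}\le B'\|h\|_{0,2}$, is false for real $\theta\neq0$ whenever $\Sigma^2>0$: then $\Lambda(\theta)>0$, and $\mcl^{\theta,(n)}_\om h^\theta_\om=\bigl(\prod_{j=0}^{n-1}\lambda^\theta_{\sigma^j\om}\bigr)h^\theta_{\sigma^n\om}$ grows like $e^{n\Lambda(\theta)}$ while $\|h^\theta_{\sigma^n\om}\|_{0,2}\ge 1/C$ because $h^\theta_{\sigma^n\om}(1)=1$ and \eqref{dist_cont}. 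Your telescoping also cannot prove such a bound: it accumulates $n$ error terms each of size $C|\theta|$, so the induction hypothesis $\|\mcl^{\theta,(j)}_\om\|_{0,2}\le B'$ does not close for any fixed $\theta\neq0$. (A uniform weak bound does hold for purely imaginary $\theta$, but that is Lemma~\ref{lem:strongLY}, and its proof needs the contraction of $S_ng$ along admissible leaves, not telescoping.) Second, the smallness condition in step (iv), $K(\theta)^{n_0}Ba^{n_0}<1$, cannot be arranged: $K(0)$ is only the one-step operator bound from \eqref{ub_cont}, typically much larger than $1$, so $(K(0)a)^{n_0}$ need not be small for any $n_0$, and by continuity the same is true for all small $\theta$. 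The correct per-block mechanism is additive rather than multiplicative: $\|\mcl^{\theta,(N)}_\om h\|_{1,1}\le Ba^N\|h\|_{1,1}+B\|h\|_{0,2}+\|\mcl^{\theta,(N)}_\om-\mcl^{(N)}_\om\|_{1,1}\,\|h\|_{1,1}$, with the last term $\le C_N|\theta|\,\|h\|_{1,1}$ by the telescoping you already wrote down; one chooses $N$ first and then $|\theta|$.

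Step (v) then cites the wrong lemma. \cite[Lemma 2.1]{DFGTV} presupposes precisely the uniform weak bound that fails here, and its conclusion would force $\Lambda(\theta)=0$, which is also false for real $\theta\neq 0$. Since the twisted cocycle only satisfies the fixed-$N$ Lasota--Yorke inequality \eqref{0320_cont} plus a one-step weak bound \eqref{0321_cont}, quasi-compactness must be obtained by comparing the index of compactness---which the Lasota--Yorke inequality and the compact embedding of the $\|\cdot\|_{1,1}$-ball into $\mathcal B^{0,2}$ control by $\frac1N\log\tilde\gamma<0$---against a \emph{lower} bound for $\Lambda(\theta)$; the latter comes from $\Lambda(\theta)\ge\hat\Lambda(\theta)$ and the continuity of $\hat\Lambda$ at $0$ with $\hat\Lambda(0)=0$. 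This is the argument of \cite[Theorem 3.12]{DFGTV} that the paper invokes, and it is the ingredient missing from your write-up.
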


\begin{proof}
 We follow closely~\cite[Lemma 3.13]{DFGTV}. Observe~\eqref{wsly_cont} and choose $N\in \mathbb N$ such that $\gamma:=Ba^N<1$. Hence,
 \[
 \begin{split}
  \lVert \mcl_\om^{\theta, (N)} h\rVert_{1,1} &\le \lVert  \mcl_\om^{ (N)} h\rVert_{1,1}+\lVert \mcl_\om^{\theta, (N)}-\mcl_\om^{(N)}\rVert_{1,1} \cdot \lVert h\rVert_{1,1} \\
  &\le
  \gamma \lVert h\rVert_{1,1}+B\lVert h\rVert_{0,2}+\lVert \mcl_\om^{\theta, (N)}-\mcl_\om^{(N)}\rVert_{1,1} \cdot \lVert h\rVert_{1,1}.
  \end{split}
 \]
 On the other hand, we have that
\[
 \mcl_\om^{\theta, (N)}-\mcl_\om^{(N)}=\sum_{j=0}^{N-1} \mcl_{\sigma^{N-j} \om}^{\theta, (j)}(\mcl_{\sigma^{N-1-j} \om}^\theta -\mcl_{\sigma^{N-1-j} \om})\mcl_\om^{(N-1-j)}.
\]
It follows from~\eqref{ub_cont} and~\eqref{ubt_cont} that
\[
 \lVert \mcl_\om^{(N-1-j)}\rVert_{1,1} \le K^{N-1-j} \quad \text{and} \quad \lVert \mcl_{\sigma^{N-j} \om}^{\theta, (j)}\rVert_{1,1} \le K(\theta)^j.
\]
Furthermore, using~\eqref{ub_cont}, we have that for any $h\in \mathcal B^{1,1}$ and \paeom,
\[
 \lVert (\mcl_\om^\theta -\mcl _\om)(h) \rVert_{1,1}=\lVert \mcl_\om (e^{\theta g(\om, \cdot)}h-h)\rVert_{1,1} \le K\lVert (e^{\theta g(\om, \cdot)}-1)h\rVert_{1,1}.
 \]
Moreover,
\[
 \lVert (e^{\theta g(\om, \cdot)}-1)h\rVert_{1,1}=\max \{ \lVert (e^{\theta g(\om, \cdot)}-1)h\rVert^\sim_{0,1}, \lVert (e^{\theta g(\om, \cdot)}-1)h\rVert^\sim_{1,2} \}.
\]
Now Lemma~\ref{tecl} (applied for $\theta_1=\theta$ and $\theta_2=0$) implies that there exists $C>0$ such for $\theta \in B_{\mathbb C}(0, 1)$,
\[
  \lVert (e^{\theta g(\om, \cdot)}-1)h\rVert_{1,1} \le C \lvert \theta \rvert \lVert h\rVert_{1,1} \quad \text{for $h\in \mathcal B^{1,1}$.}
\]
We conclude that
\[
 \lVert \mcl_\om^{\theta, (N)}-\mcl_\om^{(N)}\rVert_{1,1} \le C\lvert \theta \rvert \sum_{j=0}^{N-1}K^{N-1-j}K(\theta)^j,
\]
and therefore there exists $\tilde \gamma \in (0, 1)$ such that for any $\theta$ sufficiently close to $0$ and $h\in \mathcal B^{1,1}$,
\begin{equation}\label{0320_cont}
 \lVert \mcl_\om^{\theta, (N)} h\rVert_{1,1} \le \tilde \gamma \lVert h\rVert_{1,1}+B\lVert h\rVert_{0,2}.
\end{equation}
Similarly, one can show that there exists $\tilde B>0$ such that for any $\theta$ sufficiently close to $0$ and $h\in \mathcal B^{1,1}$,
\begin{equation}\label{0321_cont}
 \lVert \mcl_\om^\theta h\rVert_{0,2}\le \tilde B\lVert h\rVert_{0,2}.
\end{equation}
The conclusion of the proposition follows from~\eqref{0320_cont} and~\eqref{0321_cont} by arguing as in the quasi-compactness part of the proof of~\cite[Theorem 3.12]{DFGTV}.
\end{proof}

\section{Regularity of the top Oseledets space of the twisted cocycle}
\label{sec:regularity}
Let $\mathcal S'$ be the space of measurable maps $\mathcal V\colon \Omega \to \mathcal B^{1,1}$ with the property that
\[\lVert \mathcal V\rVert_\infty:=\esssup_{\om \in \Om}\lVert \mathcal V(\om)\rVert_{1,1}<\infty.\] Then, $(\mathcal S', \lVert \cdot \rVert_\infty)$ is a Banach space.
Furthermore, let $\mathcal S$ be the set of all $\mathcal V\in \mathcal S'$ such that $\mathcal V(\om)(1)=0$ for \paeom.
Arguing as in the proof of  Proposition~\ref{prop:uniqueAcim}, it is easy to verify that $\mathcal S$ is  a closed subspace of
$\mathcal S'$.
For $\mathcal V\in \mathcal S'$ and $\omega \in \Omega$ we will often write $\mathcal V_\omega$ instead of $\mathcal V(\omega)$.

\subsection{Regularity of the cocycles}
\begin{lemma}\label{lem:analyt}\quad
\begin{enumerate}
\item  \label{it:analyt1}
 For $\mathbb{P}$-a.e. $\omega\in\Omega$, the map $\theta \mapsto \mcl_\om^\theta$ is analytic in the norm topology of $\mathcal B^{1,1}$.
 \item \label{it:analyt2}
  The map
 $\mc P:  B_{\C}(0, 1) \times \mc S\to \mc S$, given by
  $\mc P (\theta, \mc V)_\om = \mcl_{\sig^{-1}\om}^\theta (\mc V_{\sig^{-1}\om})$ is analytic in $\theta$ and bounded, linear in $\mc V$.
  In particular, $\mc P$ is $C^\infty$.
   \item \label{it:analyt3}
  The map
 $\mc P_1:  B_{\C}(0, 1) \times \mc S\to \mc L^\infty(\Om)$, given by
  $\mc P_1 (\theta, \mc V)_\om =  (\mcl_{\sig^{-1}\om}^\theta (\mc V_{\sig^{-1}\om}))( 1) $ is analytic in $\theta$ and bounded, linear in $\mc V$.   In particular, $\mc P_1$ is $C^\infty$.
  \end{enumerate}
\end{lemma}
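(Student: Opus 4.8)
The plan is to reduce everything to one analyticity statement---for multiplication by $e^{\theta g}$---and then transport it through the cocycle. For part~\eqref{it:analyt1}, write $\mcl_\om^\theta h=\mcl_\om\bigl(e^{\theta g(\om,\cdot)}\cdot h\bigr)$ and factor the $\theta$-dependence through $\Phi_\om\colon\theta\mapsto e^{\theta g(\om,\cdot)}$. Since $g(\om,\cdot)\in C^r$ and $C^r(X,\C)$ is, up to a fixed constant, a Banach algebra under pointwise multiplication, the series $\sum_{k\ge 0}\tfrac{\theta^k}{k!}g(\om,\cdot)^k$ converges in $C^r$ for all $\theta\in\C$, so $\Phi_\om$ is entire as a map $\C\to C^r(X,\C)$. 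Next, re-examining the estimates in the proof of Proposition~\ref{twistcty} (which bound $\|\rho\cdot h\|_{1,1}$ by the $C^1$/$C^2$ norms of $\rho$ composed with the fixed atlas $\{\psi_i\}$ and admissible graphs $\Xi_i$), I would record that there is $C>0$, depending only on this fixed chart/cone data and hence independent of $\om$, with $\|\rho\cdot h\|_{1,1}\le C\|\rho\|_{C^2}\|h\|_{1,1}$ for $\rho\in C^r(X,\C)$ and $h\in\mathcal B^{1,1}$; that is, $M\colon\rho\mapsto(h\mapsto\rho\cdot h)$ is a bounded linear map from $C^r(X,\C)$ into $L(\mathcal B^{1,1})$. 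Composing the entire map $\Phi_\om$ with $M$ and then with the fixed operator $\mcl_\om$ (of norm $\le K$ by \eqref{ub_cont}) yields $\mcl_\om^\theta=\mcl_\om\circ M(\Phi_\om(\theta))$, which is therefore analytic, indeed entire, in $L(\mathcal B^{1,1})$, for every $\om$ with $\|g(\om,\cdot)\|_{C^r}$ bounded by its essential supremum, i.e. for \paeom.

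For parts~\eqref{it:analyt2} and \eqref{it:analyt3}, linearity in $\mc V$ is immediate, and boundedness in $\mc V$ for fixed $\theta$ follows from $\sig$-invariance of $\bbp$ together with \eqref{ubt_cont} (giving $\|\mc P(\theta,\mc V)\|_\infty\le K(\theta)\|\mc V\|_\infty$) and, for $\mc P_1$, also \eqref{dist_cont} applied with $\varphi=1$ (giving $\|\mc P_1(\theta,\mc V)\|_{\mc L^\infty(\Om)}\le CK(\theta)\|\mc V\|_\infty$). Measurability of $\om\mapsto\mcl_{\sig^{-1}\om}^\theta(\mc V_{\sig^{-1}\om})$---hence that $\mc P(\theta,\mc V)$ defines an element of $\mc S'$ and $\mc P_1(\theta,\mc V)$ one of $\mc L^\infty(\Om)$---follows from the strong measurability of $\om\mapsto\mcl_\om$ established in \S\ref{S:strongMeas} (which, via Lemma~\ref{tecl} and the bound on $\|M\|$, also gives strong measurability of $\om\mapsto\mcl_\om^\theta$), together with measurability of $\mc V$ and of $\sig^{-1}$. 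I would flag here that $(\mcl_{\sig^{-1}\om}^\theta\mc V_{\sig^{-1}\om})(1)=\mc V_{\sig^{-1}\om}\bigl(e^{\theta g(\sig^{-1}\om,\cdot)}-1\bigr)$, which vanishes at $\theta=0$ but is the $O(\theta)$ quantity recorded precisely by $\mc P_1$ (so the relevant target for $\mc P$ itself is $\mc S'$).

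Finally, for analyticity in $\theta$ of $\mc P$ and $\mc P_1$, the extra ingredient beyond part~\eqref{it:analyt1} is uniformity of the Taylor coefficients over $\om$: from the proof above, $\mcl_\om^\theta=\sum_{k\ge 0}\theta^k\mathcal A_\om^{(k)}$ with $\mathcal A_\om^{(k)}h=\tfrac1{k!}\mcl_\om\bigl(g(\om,\cdot)^k h\bigr)$ and $\|\mathcal A_\om^{(k)}\|_{L(\mathcal B^{1,1})}\le C_1C_2^k/k!$, where $C_1,C_2$ depend only on $\esssup_\om\|g(\om,\cdot)\|_{C^r}$ and the fixed data, hence are independent of $\om$. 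Thus $\mc P(\theta,\cdot)=\sum_{k\ge 0}\theta^k\mathcal B_k$ in $L(\mc S,\mc S')$, with $\mathcal B_k\mc V:=\bigl(\om\mapsto\mathcal A_{\sig^{-1}\om}^{(k)}\mc V_{\sig^{-1}\om}\bigr)$ and $\|\mathcal B_k\|\le C_1C_2^k/k!$ (by $\sig$-invariance), a $\theta$-power series converging in operator norm for all $\theta$; together with linearity in $\mc V$ this makes $\mc P$ jointly analytic, in particular $C^\infty$, on $B_{\C}(0,1)\times\mc S$, and the identical argument, post-composing the $\mathcal A_\om^{(k)}$-terms with evaluation at $1$ and using \eqref{dist_cont}, handles $\mc P_1$. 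The one genuinely technical point is the uniform-in-$\om$ multiplication bound $\|\rho\cdot h\|_{1,1}\le C\|\rho\|_{C^2}\|h\|_{1,1}$: it must be read off from the chart/admissible-graph definitions \eqref{auxnorms}--\eqref{norms} with the constant kept independent of $\om$ (the computation being essentially the one already carried out in the proof of Proposition~\ref{twistcty}), after which the $C^r$ Banach-algebra estimate gives coefficient bounds summable against $1/k!$ and the rest is routine measurability bookkeeping using \S\ref{S:strongMeas}.
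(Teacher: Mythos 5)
Your proof is correct and follows essentially the same route as the paper's: expand $e^{\theta g(\om,\cdot)}$ as a power series, bound the coefficients uniformly in $\om$ via the multiplication estimate $\lVert \rho\cdot h\rVert_{1,1}\le C\lvert\rho\rvert_{C^2}\lVert h\rVert_{1,1}$ (the paper simply cites \cite[Lemma 3.2]{GL} for this, which is exactly the bound you propose to extract from the chart computations behind Proposition~\ref{twistcty}), and sum the series in the relevant operator norms using \eqref{obs_cont} and \eqref{ub_cont}. Your side remark that the natural codomain of $\mc P$ is $\mc S'$ rather than $\mc S$ is a correct catch about the statement, and is consistent with how the lemma is actually used (the map $G$ of \S\ref{RoF} is declared with codomain $\mc S'$).
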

\begin{proof}
We  claim that for every $h \in \mc B^{1,1}$, the following holds:
\begin{equation}\label{eq:analytic}
\mcl_\om^\theta(h) = \sum_{k=0}^\infty \frac{\theta^k}{k!} \mcl_\om(g(\om, \cdot)^k h), \quad \text { in } \mc B^{1,1}.
\end{equation}
To verify this, note that \cite[Lemma 3.2]{GL} implies that \[ \| \mcl_\om(g(\om, \cdot)^k h) \|_{{1,1}} \leq C \| g(\om, \cdot)^k\|_{C^2} \|h\|_{{1,1}}\leq C \| g(\om, \cdot)\|^k_{C^2} \|h\|_{{1,1}},\] so by \eqref{obs_cont}, the RHS of \eqref{eq:analytic} is a well defined element of $\mc B^{1,1}$.
The fact that it coincides with $\mcl_\om^\theta(h)$ is straightforward to check, using linearity of $\mcl_\om$, the power series expansion of $e^{\theta g(\om, \cdot)}$, and testing against functions $\varphi \in \mc C^1$. This concludes the proof of Lemma~\ref{lem:analyt}\eqref{it:analyt1}.

Let us prove Lemma~\ref{lem:analyt}\eqref{it:analyt2}.
For each $k\geq 0$ and $\mc V \in \mc S$, let $(g^k \cdot \mc V) (\om, \cdot) := g(\om, \cdot)^k \mc V (\om, \cdot)$. Then,  $g^k \cdot \mc V \in \mc S$, because of \eqref{obs_cont} and \cite[Lemma 3.2]{GL}.
We claim that
\begin{equation}\label{eq:analyticInS0}
\mc P (\theta, \mc V) = \sum_{k=0}^\infty \frac{\theta^k}{k!} \mc P (0, g^k \cdot \mc V)
 \quad \text { in } \mc S.
\end{equation}
Indeed, \eqref{eq:analytic} implies that
\begin{equation}\label{eq:analyticPW}
\mc P (\theta, \mc V)_\om = \sum_{k=0}^\infty \frac{\theta^k}{k!} \mc P (0, g^k \cdot \mc V)_\om
 \quad \text { in } \mc B^{1,1}.
\end{equation}
Furthermore, using once again  \cite[Lemma 3.2]{GL}, in combination with the uniform over $\om$ bounds \eqref{ub_cont} and \eqref{obs_cont}, we have that there exists $C> 0$ such that for \paeom,
\begin{equation}\label{eq:bdPowerSeries}
\| \mc P (\theta, \mc V)_\om \|_{1,1} \leq \sum_{k=0}^\infty \frac{\theta^k}{k!} \|\mc P (0, g^k \cdot \mc V)_\om\|_{1,1} \leq
C \sum_{k=0}^\infty \frac{\theta^k}{k!}
 \esssup_{\om \in \Om} \lVert g(\om, \cdot)\rVert_{C^2}^k \|\mc V\|_\infty.
\end{equation}
Hence, the series in \eqref{eq:analyticInS0} indeed converges in $\mc S$ and yields  analyticity as required.
The fact that $\mc{V} \mapsto \mc P(\theta, \mc V)$, and also  $\mc{V} \mapsto \mc P (0, g^k \cdot \mc V)$ is linear and bounded is straightforward to check.
Hence, the $C^\infty$ claim follows immediately.

The proof of  Lemma~\ref{lem:analyt}\eqref{it:analyt3} is similar to that of Lemma~\ref{lem:analyt}\eqref{it:analyt2}. Indeed,
\begin{equation}\label{eq:analyticInS}
\mc P_1 (\theta, \mc V)_\om = \sum_{k=0}^\infty \frac{\theta^k}{k!} \langle \mc P (0, g^k \cdot \mc V)_\om , 1 \rangle,
\end{equation}
and \eqref{dist_cont} implies that $|\langle \mc P (0, g^k \cdot \mc V)_\om , 1 \rangle| \leq C \| \mc P (0, g^k \cdot \mc V)_\om \|_{1,1}$, which was bounded uniformly over $\om$ in \eqref{eq:bdPowerSeries}.
Hence, the series \eqref{eq:analyticInS} converges to $\mc P_1 (\theta, \mc V)$ in $L^\infty(\Om)$.
\end{proof}

\subsection{An auxiliary function $F$ and its regularity}
\label{RoF}
For $\theta \in \C$ and $\mc W\in \mathcal S$,
set
\begin{equation}\label{defF}
 F(\theta, \mc W)(\om)=\frac{\mcl_{\sigma^{-1}\om}^\theta (\mc W(\sigma^{-1}\om)+h_{\sigma^{-1} \om}^0)}{\mcl_{\sigma^{-1}\om}^\theta (\mc W(\sigma^{-1}\om)+h_{\sigma^{-1} \om}^0)(1)}
 -\mc W(\om )-h_\om^0, \quad \om \in \Om.
\end{equation}
We define two further auxiliary functions, which will be used in the sequel. Let $G \colon \C \times \mc S\to \mc S'$ and $H \colon \C \times \mc S \to L^\infty(\Omega)$ be given by
\begin{equation}
\label{fG}
 G(\theta, \mc W)(\om):=\mc P(\theta, \mc W+ h^0) (\om)= \mcl_{\sigma^{-1}\om}^\theta (\mc W_{\sigma^{-1}\om}+h_{\sigma^{-1} \om}^0)
 , \quad \om \in \Om,
\end{equation}
\begin{equation}\label{fH}
 H(\theta, \mc W)(\om):= \mc P_1(\theta, \mc W+ h^0) (\om)=\mcl_{\sigma^{-1}\om}^\theta (\mc W_{\sigma^{-1}\om}+h_{\sigma^{-1} \om}^0)(1), \quad \om \in \Om.
\end{equation}
It follows readily from \eqref{b} and Lemma~\ref{lem:analyt} that $G$ and $H$ are  well defined, and in fact $C^\infty$ functions.
Direct calculations, analogous to those of \cite[Appendix B]{DFGTV}, yield the following:

\begin{lemma}\label{lem:derivGH}
For $\om \in \Om; \theta, z \in \C$; $\mc W, \mc H \in \mathcal S$, the following identities hold:
 \begin{align}
\label{D1G}
 D_1 G(\theta, \mc W)(z)_\om &=z \mathcal  L_{\sigma^{-1} \om}(g(\sigma^{-1}\om ,\cdot)e^{\theta g(\sigma^{-1} \om ,\cdot)}(\mc W_{\sigma^{-1} \om}+h^0_{\sigma^{-1} \om})),
\\
\label{D2G}
   D_2G(\theta, \mc{W})(\mc{H})_\om&=\mathcal L^\theta_{\sigma^{-1} \om}(\mc{H}_{\sigma^{-1} \om}),
   \\
   \label{D11G}
 D_{11} G(\theta, \mc W)(z_1, z_2)_\om&=z_1 z_2 \mathcal  L_{\sigma^{-1} \om}(g(\sigma^{-1}\om ,\cdot)^2e^{\theta g(\sigma^{-1} \om ,\cdot)}(\mc W_{\sigma^{-1} \om}+h^0_{\sigma^{-1} \om})),
\\
\label{D12G}
 D_{12}G(\theta, \mc W)(z,\mc H)_\om&=D_{21}G(\theta, \mc W)(\mc H,z)_\om=z \mathcal L_{\sigma^{-1} \om}(g(\sigma^{-1} \om, \cdot)e^{\theta g(\sigma^{-1} \om, \cdot)} \mc H_{\sigma^{-1} \om}),
\\
\label{D22G}
D_{22}G &=0.
\end{align}
Moreover, the expressions for the derivatives of $H$ are equal to the corresponding expression for $G$ applied to the constant function 1.
\end{lemma}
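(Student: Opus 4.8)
The plan is to differentiate, term by term, the $\mc S'$-valued power series that underlies $G$. By \eqref{fG} we have $G(\theta,\mc W)_\om = \mcl^\theta_{\sigma^{-1}\om}(\mc W_{\sigma^{-1}\om}+h^0_{\sigma^{-1}\om})$, and the argument proving Lemma~\ref{lem:analyt}\eqref{it:analyt2} — applied with $\mc W + h^0 \in \mc S'$ in place of $\mc V$, using \eqref{b} for the uniform bound on $h^0$ — shows that
\begin{equation*}
G(\theta,\mc W)_\om = \sum_{k=0}^\infty \frac{\theta^k}{k!}\,\mcl_{\sigma^{-1}\om}\!\bigl(g(\sigma^{-1}\om,\cdot)^k(\mc W_{\sigma^{-1}\om}+h^0_{\sigma^{-1}\om})\bigr),
\end{equation*}
with the series converging in $\mc S'$ uniformly for $\theta$ in compact subsets of $\C$ (this is exactly the content of \eqref{eq:bdPowerSeries}). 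Since each coefficient is affine in $\mc W$ with bounded linear part, the locally uniform convergence permits differentiating the series term by term in both $\theta$ and $\mc W$. This interchange is the only analytic point in the proof, and it is already subsumed in Lemma~\ref{lem:analyt}.

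The $\theta$-derivatives are then pure bookkeeping. Differentiating the series in $\theta$ and reindexing replaces $g^k$ by $g^{k+1}$, and re-summing $\sum_k \frac{\theta^k}{k!}\,g(\sigma^{-1}\om,\cdot)^{k+1}=g(\sigma^{-1}\om,\cdot)\,e^{\theta g(\sigma^{-1}\om,\cdot)}$ inside $\mc B^{1,1}$ — legitimate because multiplication by a $C^2$ function is bounded on $\mc B^{1,1}$ by \cite[Lemma 3.2]{GL} and $\|g(\om,\cdot)\|_{C^2}$ is uniformly bounded by \eqref{obs_cont} — gives \eqref{D1G}. One further $\theta$-derivative inserts an extra factor of $g$, yielding \eqref{D11G}. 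For the $\mc W$-direction, $G(\theta,\cdot)$ is affine with linear part $\mc H \mapsto \mcl^\theta_{\sigma^{-1}\om}(\mc H_{\sigma^{-1}\om})$, whence $D_2 G$ is \eqref{D2G} and $D_{22}G\equiv 0$ is \eqref{D22G}. The mixed derivative \eqref{D12G} is obtained by differentiating $D_1 G$ in $\mc W$ — which merely replaces $\mc W_{\sigma^{-1}\om}+h^0_{\sigma^{-1}\om}$ by $\mc H_{\sigma^{-1}\om}$ — or, equivalently, by differentiating $D_2 G$ in $\theta$; since mixed second derivatives commute and the two computations visibly agree, $D_{12}G = D_{21}G$.

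Finally, for $H$ observe from \eqref{fH} that $H(\theta,\mc W)_\om = \bigl(G(\theta,\mc W)_\om\bigr)(1)$, i.e.\ $H = \iota\circ G$, where $\iota\colon \mc B^{1,1}\to\C$, $\iota(h):=h(1)$, is bounded and linear by \eqref{dist_cont} and hence induces a bounded linear map $\mc S'\to L^\infty(\Om)$ acting fibrewise. By the chain rule, every partial derivative of $H$ equals $\iota$ applied to the corresponding partial derivative of $G$, which is precisely the assertion that the derivatives of $H$ are given by the expressions for $G$ evaluated at the constant function $1$. I do not foresee a genuine obstacle here: beyond the (already-established) locally uniform convergence of the defining power series, everything reduces to differentiating a geometric-type series and invoking the multiplier estimate \cite[Lemma 3.2]{GL}.
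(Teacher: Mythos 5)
Your argument is correct and matches the paper's (largely implicit) proof: the paper simply invokes ``direct calculations'' analogous to an appendix of \cite{DFGTV}, and you carry these out by term-by-term differentiation of the power series already established in Lemma~\ref{lem:analyt}, correctly noting that the expansion and the bound \eqref{eq:bdPowerSeries} extend from $\mc S$ to $\mc S'$ so as to accommodate $\mc W+h^0$ via \eqref{b}. The remaining steps (resummation in $C^2$ using \cite[Lemma 3.2]{GL} and \eqref{obs_cont}, affineness in $\mc W$, and the chain rule through the bounded evaluation functional $h\mapsto h(1)$ for $H$) are all sound.
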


\begin{lemma}\label{lem:FwellDef}
 There exist  $\ep, R>0$ such that
  $F \colon \mc{D} \to \mc{S}$ is a well-defined map on \[ \mc{D}:=\{ \theta \in \C : |\theta|<\ep \} \times B_{\mc{S}}(0,R),\] where $B_{\mc{S}}(0,R)$ denotes the ball of radius $R$ in $\mc{S}$ centered at $0$.
\end{lemma}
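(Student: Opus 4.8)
The plan is to regard $F$ as built from the two $C^\infty$ maps $G$ and $H$ of \eqref{fG}--\eqref{fH}, so that
\[
F(\theta,\mc W)(\om)=\frac{G(\theta,\mc W)(\om)}{H(\theta,\mc W)(\om)}-\mc W(\om)-h^0_\om ,
\]
and then to verify the three assertions hidden in ``well-defined as a map into $\mc S$'': (i) the scalar $H(\theta,\mc W)(\om)$ stays bounded away from $0$ uniformly in $\om$, so the quotient makes sense; (ii) $F(\theta,\mc W)$ is a measurable map $\Om\to\mc B^{1,1}$ with $\esssup_\om\|F(\theta,\mc W)(\om)\|_{1,1}<\infty$, i.e.\ it lies in $\mc S'$; and (iii) $F(\theta,\mc W)(\om)(1)=0$ for \paeom, which promotes it from $\mc S'$ to $\mc S$. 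I fix $R>0$ arbitrarily and will choose $\ep\in(0,1)$ small depending on $R$.

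For (i), the starting point is the identity $H(0,\mc W)(\om)=1$ for every $\mc W\in\mc S$: indeed $H(0,\mc W)(\om)=\mcl_{\sigma^{-1}\om}(\mc W_{\sigma^{-1}\om}+h^0_{\sigma^{-1}\om})(1)$, and testing \eqref{aux_cont} against $\varphi\equiv 1$ gives $\mcl_{\sigma^{-1}\om}(\cdot)(1)=(\cdot)(1)$, so this equals $(\mc W_{\sigma^{-1}\om}+h^0_{\sigma^{-1}\om})(1)=0+1=1$ by $\mc W\in\mc S$ and Proposition~\ref{prop:uniqueAcim}. Next I use the power series \eqref{eq:analyticInS}: its $k=0$ term is exactly $H(0,\mc W)(\om)=1$, while the remaining terms are estimated, exactly as in \eqref{eq:bdPowerSeries} (via \eqref{dist_cont}, \eqref{ub_cont}, \cite[Lemma 3.2]{GL} and $\|g(\om,\cdot)^k\|_{C^2}\le\|g(\om,\cdot)\|_{C^2}^k$), by $\tfrac{|\theta|^k}{k!}\,C(R+\|h^0\|_\infty)\,\|g(\sigma^{-1}\om,\cdot)\|_{C^2}^k$. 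Summing and using \eqref{obs_cont},
\[
|H(\theta,\mc W)(\om)-1|\le C\,(R+\|h^0\|_\infty)\bigl(e^{|\theta|\,\esssup_\om\|g(\om,\cdot)\|_{C^2}}-1\bigr)\qquad\text{for \paeom,}
\]
with $C$ independent of $\om$, of $\mc W$ with $\|\mc W\|_\infty\le R$, and of $\theta$. Choosing $\ep$ so small that the right-hand side is $\le\tfrac12$ whenever $|\theta|<\ep$, I get $|H(\theta,\mc W)(\om)|\ge\tfrac12$ on all of $\mc D$ and \paeom, which is (i).

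For (ii), the same computation bounds the numerator: $\|G(\theta,\mc W)(\om)\|_{1,1}=\|\mc P(\theta,\mc W+h^0)(\om)\|_{1,1}\le C(R+\|h^0\|_\infty)e^{|\theta|\,\esssup_\om\|g(\om,\cdot)\|_{C^2}}$ by \eqref{eq:bdPowerSeries}, uniformly in $\om$; combined with the lower bound on $H$ from (i) and with \eqref{b}, this yields $\esssup_\om\|F(\theta,\mc W)(\om)\|_{1,1}\le 2\,\esssup_\om\|G(\theta,\mc W)(\om)\|_{1,1}+R+\|h^0\|_\infty<\infty$. Measurability of $\om\mapsto F(\theta,\mc W)(\om)$ follows from that of $\om\mapsto G(\theta,\mc W)(\om)$ (an element of $\mc S'$), of $\om\mapsto H(\theta,\mc W)(\om)$ (an element of $L^\infty(\Om)$, nowhere vanishing by (i)), and of $\om\mapsto\mc W(\om)+h^0_\om$. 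Hence $F(\theta,\mc W)\in\mc S'$. For (iii), note $G(\theta,\mc W)(\om)(1)=H(\theta,\mc W)(\om)$ by definition, so $\bigl(G(\theta,\mc W)(\om)/H(\theta,\mc W)(\om)\bigr)(1)=1$, while $(\mc W(\om)+h^0_\om)(1)=0+1=1$; subtracting gives $F(\theta,\mc W)(\om)(1)=0$ for \paeom. Therefore $F(\theta,\mc W)\in\mc S$ for every $(\theta,\mc W)\in\mc D$, as claimed. The one genuinely substantive step is (i), the uniform-in-$\om$ lower bound on the denominator: it is what forces $\ep$ to be small and what makes the quotient in \eqref{defF} an honest element of the fibre space with a uniform norm bound. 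Parts (ii) and (iii) are routine bookkeeping with the already-established $C^\infty$ regularity of $G$ and $H$ and the uniform estimates \eqref{b}, \eqref{ub_cont}, \eqref{obs_cont}, so I anticipate no difficulty there.
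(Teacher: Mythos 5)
Your proof is correct and follows essentially the same route as the paper: lower-bound the denominator $H(\theta,\mc W)(\om)$ uniformly in $\om$ near the base point, use \eqref{b} and the uniform bounds to control the $\mc S'$-norm of the quotient, and finish with the observation that $F(\theta,\mc W)(\om)(1)=0$. The only real difference is that you make the continuity of $H$ quantitative via the power series of Lemma~\ref{lem:analyt} and exploit the identity $H(0,\mc W)\equiv 1$ for \emph{all} $\mc W\in\mc S$ (mass preservation under $\mcl_\om$), which lets you fix $R$ arbitrarily and shrink only $\ep$, whereas the paper's qualitative continuity argument at $(0,0)$ shrinks both $\ep$ and $R$; either version suffices for the stated lemma.
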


\begin{proof}
 Let $G$ and $H$ be defined as in (\ref{fG}) and (\ref{fH}).
 The function $H$ is continuous on a neighborhood  of $(0, 0)$ in $\mathbb C \times \mc S$ and obviously $H(0, 0)(\om)=h_\om^0(1)=1$ for \paeom. Hence,
 \[
  \lvert H(\theta, \mc W)(\om)\rvert \ge 1-\lvert H(0, 0)(\om)-H(\theta, \mc W)(\om)\rvert \ge 1-\lVert H(0, 0)-H(\theta, \mc W)\rVert_{L^\infty},
 \]
for \paeom. Continuity of $H$ implies that $\lVert H(0, 0)-H(\theta, \mc W)\rVert_{L^\infty}\le \frac 1 2$ for all $(\theta, \mc W)$ in a neighborhood of $(0, 0)$ in $\mathbb C \times \mc S$
and hence, in such a neighborhood,
\[
 \essinf_{\om \in \Om} \lvert H(\theta, \mc W)(\om)\rvert \ge \frac 1 2.
\]
This together with~\eqref{b} and a simple observation that $F(\theta, \mc W)(1)=0$  immediately yields the desired conclusion.

\end{proof}

Notice that map $F$ defined by~\eqref{defF} satisfies $F(\theta, \mc W)(\om)= G(\theta, \mc W)(\om)/H (\theta, \mc W)(\om) - \mc W(\om )-h_\om^0$. The proof of Lemma~\ref{lem:FwellDef} ensures   that for $(\theta, \mc W)$ in a neighbourhood $\mathcal{D}$ of $(0,0)\in \C \times \mc{S}$,
$\essinf_{\om \in \Om} \lvert H(\theta, \mc W)(\om)\rvert \ge \frac 1 2$.
Thus, the following result is a direct consequence of  Lemma~\ref{lem:derivGH}.

\begin{proposition}\label{difF}
The  map $F$ defined by~\eqref{defF} is of class $C^\infty$ on the neighborhood $\mathcal{D}$ of $(0, 0)\in \mathbb C \times \mc{S}$ from Lemma~\ref{lem:FwellDef}. Moreover, for $\om \in \Om, (\theta, \mc W) \in \mathcal{D}$  and $\mc H \in \mathcal S$,
 \[
  D_2 F(\theta, \mc W) (\mc H)_\om=\frac{1}{H(\theta, \mc W)(\om)}\mathcal L_{\sigma^{-1} \om}^\theta \mc H_{\sigma^{-1} \om}-\frac{\mathcal L_{\sigma^{-1} \om}^\theta
  \mc H_{\sigma^{-1} \om}(1)}{[H(\theta, \mc W)(\om)]^2}G(\theta, \mc W)_\om-\mc H_\om,
 \]
\[
 \begin{split}
 D_1 F(\theta, \mc W)_\om &=\frac{1}{H(\theta, \mc W)(\om)}\mathcal L_{\sigma^{-1} \om}(g(\sigma^{-1} \om, \cdot)e^{\theta  g(\sigma^{-1} \om, \cdot)} (\mc W_{\sigma^{-1} \om}+
 h_{\sigma^{-1} \om}^0)) \\
 &\phantom{=}-\frac{\mcl_{\sigma^{-1} \om}( g(\sigma^{-1} \om, \cdot)e^{\theta  g(\sigma^{-1} \om, \cdot)} (\mc W_{\sigma^{-1} \om}+
 h_{\sigma^{-1} \om}^0))(1)}{[H(\theta, \mc W)(\om)]^2}\mathcal L_{\sigma^{-1} \om}^\theta (\mc W_{\sigma^{-1} \om}+h_{\sigma^{-1} \om}^0),
 \end{split}
\]
where we have identified $D_1 F(\theta, \mc W)$ with its value at $1$.

\end{proposition}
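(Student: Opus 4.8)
The plan is to read everything off the factorization noted just before the statement, $F(\theta,\mc W)(\om)=G(\theta,\mc W)(\om)/H(\theta,\mc W)(\om)-\mc W(\om)-h^0_\om$, deducing the regularity of $F$ from that of $G$, $H$ and of the scalar inversion and multiplication operations, and then differentiating by the chain and quotient rules.

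First I would recall that, by \eqref{fG}, \eqref{fH} and Lemma~\ref{lem:analyt}, the maps $G\colon\C\times\mc S\to\mc S'$ and $H\colon\C\times\mc S\to L^\infty(\Om)$ are $C^\infty$, and that on the neighbourhood $\mathcal D$ furnished by Lemma~\ref{lem:FwellDef} one has $\essinf_{\om\in\Om}\lvert H(\theta,\mc W)(\om)\rvert\ge\tfrac12$. On the open set $\{u\in L^\infty(\Om):\essinf\lvert u\rvert\ge\tfrac12\}$ the inversion map $u\mapsto 1/u$ is $C^\infty$ — indeed analytic, since near any such $u$ it is given by a norm-convergent geometric (Neumann-type) series, or one may differentiate the identity $u\cdot(1/u)=1$ — with derivative $v\mapsto -v/u^2$; hence $(\theta,\mc W)\mapsto 1/H(\theta,\mc W)$ is $C^\infty$ into $L^\infty(\Om)$. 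Pointwise scalar multiplication $L^\infty(\Om)\times\mc S'\to\mc S'$, $(u,\mc V)\mapsto u\cdot\mc V$, is bounded bilinear (clear from $\lVert u\,\mc V\rVert_\infty\le\lVert u\rVert_{L^\infty}\lVert\mc V\rVert_\infty$), hence $C^\infty$, so $(\theta,\mc W)\mapsto G(\theta,\mc W)\cdot\big(1/H(\theta,\mc W)\big)$ is $C^\infty$ into $\mc S'$; subtracting the bounded affine map $(\theta,\mc W)\mapsto\mc W+h^0$ preserves this.

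Next I would check that $F$ in fact takes values in $\mc S$. For \paeom one has $G(\theta,\mc W)(\om)(1)=H(\theta,\mc W)(\om)$ straight from \eqref{fG}--\eqref{fH}, so $\big(G(\theta,\mc W)(\om)/H(\theta,\mc W)(\om)\big)(1)=1$, while $(\mc W_\om+h^0_\om)(1)=0+1=1$ by $\mc W\in\mc S$ and Proposition~\ref{prop:uniqueAcim}(2); hence $F(\theta,\mc W)(\om)(1)=0$, the observation already used in the proof of Lemma~\ref{lem:FwellDef}. Thus $F\colon\mathcal D\to\mc S$ is well-defined and $C^\infty$. The derivative formulas then follow by the chain and quotient rules, $D_iF=\dfrac{D_iG}{H}-\dfrac{G\,D_iH}{H^2}$ for $i=1,2$, with an additional $-\mathrm{Id}$ term for $i=2$ coming from the $-\mc W$ summand: one substitutes \eqref{D1G} and \eqref{D2G} (evaluating the $D_1$ formula at $z=1$, as agreed in the statement) and uses that the derivatives of $H$ are those of $G$ tested against the constant function $1$, giving exactly the two displayed expressions.

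The only point requiring a little care, and the one I would flag as the (mild) main obstacle, is the smoothness and bilinearity bookkeeping for the quotient — that inversion in $L^\infty(\Om)$ is $C^\infty$ on the relevant open set and that scalar-by-element multiplication $L^\infty(\Om)\times\mc S'\to\mc S'$ is bounded bilinear — so that $F$ is genuinely a composition of $C^\infty$ maps; once this is in place, everything else is a routine application of the chain rule together with Lemma~\ref{lem:derivGH}.
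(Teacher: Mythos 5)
Your proposal is correct and follows essentially the same route as the paper: the paper also deduces the smoothness of $F$ from the factorization $F(\theta,\mc W)(\om)=G(\theta,\mc W)(\om)/H(\theta,\mc W)(\om)-\mc W(\om)-h^0_\om$, the lower bound $\essinf_{\om}\lvert H(\theta,\mc W)(\om)\rvert\ge\tfrac12$ from Lemma~\ref{lem:FwellDef}, and the derivative formulas of Lemma~\ref{lem:derivGH} via the quotient rule. Your write-up merely makes explicit the bookkeeping (smoothness of inversion in $L^\infty(\Om)$, boundedness of the bilinear multiplication $L^\infty(\Om)\times\mc S'\to\mc S'$, and the check that $F$ lands in $\mc S$) that the paper leaves implicit.
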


\begin{lemma}\label{thm:IFT}
Let $\mc{D}=\{ \theta \in \C : |\theta|<\ep \} \times B_{\mc{S}}(0,R)$ be as in Lemma~\ref{lem:FwellDef}. Then,
$F:\mc{D} \to \mc{S}$ is $C^\infty$  and
the equation
\begin{equation}
F(\theta, \mc{W})=0
\end{equation}
has a unique solution $O(\theta) \in \mc{S}$, for every $\theta$ in a neighborhood  of 0.
Furthermore, $O(\theta)$ is a $C^\infty$ function of $\theta$.
\end{lemma}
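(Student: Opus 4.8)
The plan is to apply the implicit function theorem in Banach spaces to $F\colon\mc D\to\mc S$ at the point $(0,0)$. Three things must be checked: that $F(0,0)=0$, that $F$ is $C^\infty$ near $(0,0)$ (already established), and that the partial derivative $D_2F(0,0)$ is a bounded, boundedly invertible linear operator on $\mc S$. Only the last point carries any real content.

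First, $F(0,0)=0$. Since $\mcl_{\sigma^{-1}\om}^0=\mcl_{\sigma^{-1}\om}$, the equivariance relation $\mcl_{\sigma^{-1}\om}h^0_{\sigma^{-1}\om}=h^0_\om$ and the normalisation $h^0_\om(1)=1$ from Proposition~\ref{prop:uniqueAcim} give $G(0,0)_\om=h^0_\om$ and $H(0,0)(\om)=1$, whence $F(0,0)(\om)=h^0_\om/1-0-h^0_\om=0$ for \paeom. Second, $F$ is $C^\infty$ on the neighbourhood $\mc D$ supplied by Lemma~\ref{lem:FwellDef}, by Proposition~\ref{difF}. Third, evaluating the formula for $D_2F$ in Proposition~\ref{difF} at $(0,0)$ and using that $\mc H\in\mc S$ forces $(\mcl_{\sigma^{-1}\om}\mc H_{\sigma^{-1}\om})(1)=\mc H_{\sigma^{-1}\om}(1\circ T_{\sigma^{-1}\om})=\mc H_{\sigma^{-1}\om}(1)=0$ by \eqref{aux_cont}, the middle term drops out and
\[
D_2F(0,0)(\mc H)_\om=\mcl_{\sigma^{-1}\om}\mc H_{\sigma^{-1}\om}-\mc H_\om,
\]
that is, $D_2F(0,0)=\mathbb L-\Id$, where $\mathbb L$ denotes the operator $(\mathbb L\mc H)(\om)=\mcl_{\sigma^{-1}\om}\mc H_{\sigma^{-1}\om}$ acting on $\mc S$ (the restriction to $\mc S$ of the operator appearing in the proof of Proposition~\ref{prop:uniqueAcim}).

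To invert $\mathbb L-\Id$ I would use the uniform exponential decay \eqref{DEC_cont}. Since $(\mathbb L^n\mc H)(\om)=\mcl_{\sigma^{-n}\om}^{(n)}\mc H_{\sigma^{-n}\om}$ and $\mc H_{\sigma^{-n}\om}(1)=0$, estimate \eqref{DEC_cont} yields $\lVert\mathbb L^n\mc H\rVert_\infty\le De^{-\lam n}\lVert\mc H\rVert_\infty$, hence $\lVert\mathbb L^n\rVert_{\mc S\to\mc S}\le De^{-\lam n}$. In particular $\sum_{n\ge0}\lVert\mathbb L^n\rVert<\infty$, so the Neumann series $\sum_{n\ge0}\mathbb L^n$ converges in operator norm to a bounded inverse of $\Id-\mathbb L$; thus $D_2F(0,0)=-(\Id-\mathbb L)$ is a bounded linear isomorphism of $\mc S$. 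With $F(0,0)=0$, $F$ of class $C^\infty$ on $\mc D$, and $D_2F(0,0)$ invertible, the $C^\infty$ implicit function theorem in Banach spaces gives a neighbourhood of $0$ in $\C$ and a unique $C^\infty$ map $O$ from it into a neighbourhood of $0$ in $\mc S$ with $O(0)=0$ and $F(\theta,O(\theta))=0$, which is the assertion. (Since $F$ is moreover analytic in $\theta$ by Lemma~\ref{lem:analyt}, the holomorphic implicit function theorem further upgrades $O$ to an analytic function of $\theta$, should one want it.) The one genuine obstacle is the invertibility of $D_2F(0,0)$, which is precisely where the fiberwise spectral gap encoded in \eqref{DEC_cont} enters; the remaining steps are routine manipulations of the derivative formulas from Lemma~\ref{lem:derivGH} and Proposition~\ref{difF}.
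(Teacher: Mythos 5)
Your proposal is correct and follows the same route as the paper: verify $F(0,0)=0$, invoke the smoothness from Proposition~\ref{difF}, identify $D_2F(0,0)=\mathbb L-\Id$ on $\mc S$, invert it via the Neumann series using \eqref{DEC_cont} (which is exactly the paper's formula $(D_2F(0,0)^{-1}\mathcal X)(\om)=-\sum_{j\ge 0}\mcl_{\sigma^{-j}\om}^{(j)}\mathcal X(\sigma^{-j}\om)$, there deferred to \cite[Lemma 3.5]{DFGTV}), and conclude by the implicit function theorem. Your write-up merely makes explicit the Neumann-series details that the paper cites from its predecessor.
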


\begin{proof}
 Note that $F(0, 0)=0$. Furthermore, Proposition~\ref{difF} implies that $F$ is of class $C^\infty$ on a neighborhood of $(0, 0)$. In addition, Lemma~\ref{lem:derivGH} implies that
 \[
  (D_2F(0, 0)\mathcal X)(\om)=\mcl_{\sigma^{-1} \om}\mathcal X(\sigma^{-1} \om)-\mathcal X(\om), \quad \om \in \Om, \ \mathcal X\in \mc S.
 \]
 Using~\eqref{DEC_cont} and proceeding as in~\cite[Lemma 3.5]{DFGTV}, one can show that $D_2F(0, 0)$ is invertible and that
 \begin{equation}\label{d2f}
  (D_2F(0, 0)^{-1}\mathcal X)(\om)=-\sum_{j=0}^\infty \mcl_{\sigma^{-j} \om}^{(j)}\mathcal X(\sigma^{-j} \om) \quad \om \in \Om, \ \mathcal X\in S.
 \end{equation}
The conclusion of the lemma now follows directly  from the implicit function theorem.
\end{proof}

\section{Properties of $\Lambda(\theta)$}
\label{sec:Lambda}
Let $0<\ep<1$ be as in Lemma~\ref{lem:FwellDef} and $O(\theta)$ be as in Lemma~\ref{thm:IFT}.
Let
\begin{equation}\label{eq:vomt}
h_\om^\theta:= h_\om^0 +O(\theta)(\om) \in \mathcal B^{1,1}, \quad \om \in \Om.
\end{equation}
We notice that $h_\om^\theta(1)=1$ and by Lemma~\ref{thm:IFT}, $\theta \mapsto h^\theta$ is continuously differentiable.

Let us define
\begin{equation}\label{eq:hatLam}
 \hat\Lambda (\theta) :=  \int \log \Big|h_\om^\theta (e^{\theta g(\om, \cdot)})\Big|\, d\bbp(\om),
\end{equation}
and
\begin{equation}\label{eq:int}
\lot :=  h_\om^\theta (e^{\theta g(\om, \cdot)})
=  \mcl_\om^\theta h_\om^\theta (1).
\end{equation}

\subsection{A differentiable lower bound for $\Lambda(\theta)$}
Lemma \ref{Lamhatlemma} deals with differentiability properties of  $\hat\Lambda (\theta)$.

\begin{lemma}
\label{Lamhatlemma}\quad
\begin{enumerate}
\item
For every $\theta \in B_\C(0,\ep)$, $ \hat\Lambda (\theta)\leq \Lambda (\theta)$.
\item
$\hat\Lambda$ is differentiable on a neighborhood of 0,
and
\[
\hat \Lambda' (\theta)=
\Re \Bigg( \int \frac{ \overline{\lot}   ( (O(\theta)(\om)+h_\om^0 )(g(\om, \cdot)e^{\theta g(\om, \cdot)})+O'(\theta)(\om)(e^{\theta g(\om, \cdot)}) )}{|\lot |^2}\, d\bbp(\om) \Bigg),
\]
where $\Re (z)$ denotes the real part of $z$ and $\overline{z}$ the complex conjugate of $z$.
\item
For \paeom, and $\theta$ in a neighborhood of 0, the map $\theta \mapsto Z_\om(\theta):=Z(\theta, \omega)$ is differentiable. Moreover,
\[
 Z_\om'(\theta)=\frac{\Re \Big( \overline{\lot}   ( (O(\theta)(\om)+h_\om^0 )(g(\om, \cdot)e^{\theta g(\om, \cdot)})+O'(\theta)(\om)(e^{\theta g(\om, \cdot)}) )\Big)}{| \lot|^2}.
\]

\item
$\hat \Lambda'(0)=0$.
 \end{enumerate}
 \end{lemma}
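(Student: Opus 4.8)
The plan is to prove the four items of Lemma~\ref{Lamhatlemma} in order, since each builds on the previous one. For item~(1), I would start from the $\omega$-wise moment generating identity of Lemma~\ref{dualtwist}, which gives $(\mcl_\om^{\theta,(n)}h)(1) = h(e^{\theta S_n g(\om,\cdot)})$. Applying this with $h = h_\om^\theta$ and unwinding the definition \eqref{eq:vomt} together with the fixed-point property $F(\theta, O(\theta)) = 0$, one obtains a cocycle-type relation of the form $\mcl_\om^\theta h_\om^\theta = \lot\, h_{\sigma\om}^\theta$ with $\lot$ as in \eqref{eq:int} (this is exactly the equivariance encoded by the vanishing of $F$). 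Iterating gives $\mcl_\om^{\theta,(n)} h_\om^\theta = \big(\prod_{j=0}^{n-1} \lambda_{\sigma^j\om}^\theta\big) h_{\sigma^n\om}^\theta$, so $\frac1n\log\|\mcl_\om^{\theta,(n)} h_\om^\theta\|_{1,1} = \frac1n\sum_{j=0}^{n-1}\log|\lambda_{\sigma^j\om}^\theta| + \frac1n\log\|h_{\sigma^n\om}^\theta\|_{1,1}$. The second term tends to $0$ a.e.\ by \eqref{b}-type uniform bounds on $h^\theta$ (noting $h^\theta = h^0 + O(\theta) \in \mc S'$ is essentially bounded), and the Birkhoff averages of $\log|\lambda_\om^\theta|$ converge a.e.\ to $\hat\Lambda(\theta)$ by the ergodic theorem, provided $\log|\lambda_\om^\theta| \in L^1(\bbp)$ --- which follows since $\lot = \mcl_\om^\theta h_\om^\theta(1)$ is bounded above in modulus (by \eqref{dist_cont}, \eqref{ubt_cont}, \eqref{b}) and bounded below away from $0$ on a neighborhood of $\theta=0$ by the same continuity argument used for $H$ in Lemma~\ref{lem:FwellDef}. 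Since $\Lambda(\theta)$ is the top Lyapunov exponent (the growth rate of $\|\mcl_\om^{\theta,(n)}\|_{1,1}$), and $h_\om^\theta$ is a fixed nonzero vector, we get $\hat\Lambda(\theta) = \lim\frac1n\log\|\mcl_\om^{\theta,(n)}h_\om^\theta\|_{1,1} \le \Lambda(\theta)$.

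For item~(2), I would differentiate under the integral sign in \eqref{eq:hatLam}. Write $\hat\Lambda(\theta) = \int \log|\lambda_\om^\theta|\,d\bbp = \frac12\int\log(\lambda_\om^\theta \overline{\lambda_\om^\theta})\,d\bbp$, and use $\lot = h_\om^\theta(e^{\theta g(\om,\cdot)}) = (O(\theta)(\om)+h_\om^0)(e^{\theta g(\om,\cdot)})$. By Lemma~\ref{thm:IFT}, $\theta\mapsto O(\theta)$ is $C^\infty$ into $\mc S$, and $\theta\mapsto e^{\theta g(\om,\cdot)}$ is analytic in $C^2$ (Lemma~\ref{tecl} / Lemma~\ref{lem:analyt}), so the product pairing $\lot$ is differentiable in $\theta$ with derivative $\frac{d}{d\theta}\lot = (O(\theta)(\om)+h_\om^0)(g(\om,\cdot)e^{\theta g(\om,\cdot)}) + O'(\theta)(\om)(e^{\theta g(\om,\cdot)})$ (product rule, using \eqref{dist_cont} to pair the $\mc B^{1,1}$ and $C^1$ factors). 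Then $\frac{d}{d\theta}\log|\lot|^2 = \frac{\overline{\lot}\,\frac{d}{d\theta}\lot + \lot\,\overline{\frac{d}{d\theta}\lot}}{|\lot|^2} = 2\Re\big(\overline{\lot}^{-1}\,\frac{d}{d\theta}\lot\big)$, which gives the stated formula for $\hat\Lambda'$ after dividing by $2$. Differentiation under the integral is justified by a uniform-in-$\omega$ bound on the integrand's derivative on a small ball $B_\C(0,\ep')$: $|\lot|$ is bounded below by the Lemma~\ref{lem:FwellDef}-type argument, $\frac{d}{d\theta}\lot$ is bounded above by \eqref{dist_cont}, \eqref{obs_cont}, \eqref{b}, and the $C^\infty$ (hence locally Lipschitz) dependence of $O(\theta), O'(\theta)$ on $\theta$ from Lemma~\ref{thm:IFT}. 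Item~(3) is the pointwise-in-$\omega$ version of the same computation: $Z_\om(\theta) = \log|\lot|$, so the differentiability and the formula for $Z_\om'(\theta)$ follow verbatim from the chain/product rule above, without needing the integral.

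For item~(4), I would evaluate the formula from item~(2) at $\theta = 0$. At $\theta=0$ we have $h_\om^0 + O(0)(\om) = h_\om^0$ (since $O(0)=0$ by $F(0,0)=0$ and uniqueness in Lemma~\ref{thm:IFT}), and $\lambda_\om^0 = h_\om^0(e^0) = h_\om^0(1) = 1$, so $\overline{\lambda_\om^0}/|\lambda_\om^0|^2 = 1$. Hence $\hat\Lambda'(0) = \Re\int\big(h_\om^0(g(\om,\cdot)) + O'(0)(\om)(1)\big)\,d\bbp(\om)$. The first summand vanishes for $\bbp$-a.e.\ $\omega$ by the centering hypothesis \eqref{zeromean_cont}. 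The second summand vanishes identically because $O'(0)(\om) \in \mc S$, and every element of $\mc S$ satisfies $(\cdot)(1) = 0$ by definition of $\mc S$; concretely $O'(\theta) = \frac{d}{d\theta}O(\theta)$ lands in $\mc S$ since $O(\theta)\in\mc S$ for all $\theta$ and $\mc S$ is a closed (hence derivative-stable) subspace. Therefore $\hat\Lambda'(0) = 0$.

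The main obstacle I anticipate is item~(1): carefully establishing the scalar cocycle relation $\mcl_\om^\theta h_\om^\theta = \lambda_\om^\theta h_{\sigma\om}^\theta$ from the fixed-point equation $F(\theta,O(\theta)) = 0$ (unwinding the normalization in the definition \eqref{defF} of $F$ correctly), and then verifying the integrability $\log|\lambda_\om^\theta|\in L^1$ together with the a.e.\ convergence and the identification of the limit with the genuine top Lyapunov exponent $\Lambda(\theta)$ rather than merely some subadditive limit --- i.e., making sure that testing the cocycle against the single equivariant-like direction $h_\om^\theta$ indeed produces a lower bound for $\Lambda(\theta)$ and not an unrelated quantity. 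Everything in items~(2)--(4) is then routine differentiation under the integral plus bookkeeping with the defining property $\mc V(1)=0$ on $\mc S$.
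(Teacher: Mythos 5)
Your proposal is correct and takes essentially the same route as the paper, whose proof simply defers to Lemmas 3.8--3.11 of \cite{DFGTV}: the scalar equivariance $\mcl_\om^\theta h_\om^\theta=\lot h_{\sigma\om}^\theta$ extracted from $F(\theta,O(\theta))=0$, Birkhoff's theorem applied to $\log|\lot|$, and differentiation under the integral sign combined with $O'(\theta)\in\mc S$ and the centering condition. The only step you leave implicit in item (1) is the lower bound $\lVert h^\theta_{\sigma^n\om}\rVert_{1,1}\ge C^{-1}$ (immediate from $h^\theta_{\sigma^n\om}(1)=1$ and \eqref{dist_cont}), which is needed so that $\tfrac1n\log\lVert h^\theta_{\sigma^n\om}\rVert_{1,1}\to 0$ from below as well as from above.
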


\begin{proof}
The proof of part 1 is identical to the proof of Lemma 3.8 \cite{DFGTV} replacing $\|\cdot\|_{\mathcal{B}}$ with $\|\cdot\|_{1,1}$ and $\|\mathcal{L}_\omega^{\theta,(n)}v^\theta_\omega\|_1$ with $|\mathcal{L}_\omega^{\theta,(n)}h^\theta_\omega(1)|$.

The proof of part 2 is identical to the proof of Lemma 3.9 \cite{DFGTV}, using Lemma \ref{thm:IFT} in place of Lemma 3.5 \cite{DFGTV} and replacing the final two equation blocks with:
\[
 \begin{split}
  \lvert (O(\theta)(\om)+h_\om^0 )(g(\om, \cdot)e^{\theta g(\om, \cdot)})\rvert & \le C\lVert O(\theta)(\om)+h_\om^0 \rVert_{1,1}\cdot \lVert g(\om, \cdot)e^{\theta g(\om, \cdot)}\rVert_{C^1}
 \\&\le C\lVert O(\theta)\rVert_\infty+C,
 \end{split}
\]
and
\[
 \lvert O'(\theta)(\om)(e^{\theta g(\om, \cdot)})\rvert \le C\lVert O'(\theta)(\om)\rVert_{1,1} \cdot \lVert e^{\theta g(\om, \cdot)}\rVert_{C^1} \le
C \lVert O'(\theta)\rVert_\infty.
\]

The proof of part 3 is identical to the proof of Lemma 3.10 \cite{DFGTV}, using differentiability of $H$ and $O$ in Lemmas \ref{lem:derivGH} and \ref{thm:IFT}.

The proof of part 4 is identical to proof of Lemma 3.11 \cite{DFGTV}.
\end{proof}

\subsection{One-dimensionality of $Y_1^\theta(\om)$ and differentiability of $\Lambda$}

Let $Y_1^\theta(\om)$ denote the top Oseledets subspace of the cocycle $(\mcl_\om^\theta)_{\om \in \Om}$. The proof of part 1 of the following result can be obtained by
repeating the argument as in~\cite[Theorem 3.12]{DFGTV}, using Proposition \ref{twistQC}.
Part 2 follows by arguing as in~\cite[Corollary 3.14]{DFGTV}.
\begin{proposition}\label{cor:LamHatLam}For $\theta\in \C$ near 0
\begin{enumerate}
\item
 $\dim Y_1^\theta(\om)=1$.
\item
 $\Lam(\theta)=\hat\Lam(\theta)$.
In particular, $\Lam(\theta)$ is differentiable near $0$ and $\Lam'(0)=0$.
\end{enumerate}
\end{proposition}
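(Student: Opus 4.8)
The plan is to mirror the corresponding argument in \cite{DFGTV} (their Theorem 3.12 and Corollary 3.14), the key inputs being: (i) quasi-compactness of the twisted cocycle $(\mcl_\om^\theta)_{\om\in\Om}$ for $\theta$ near $0$ (Proposition~\ref{twistQC}); (ii) the existence of the equivariant family $h_\om^\theta = h_\om^0 + O(\theta)(\om)$ produced by the implicit function theorem (Lemma~\ref{thm:IFT}, equation~\eqref{eq:vomt}); and (iii) the regularity of $\hat\Lambda$ from Lemma~\ref{Lamhatlemma}. For part (1), I would first observe that by construction $F(\theta, O(\theta))=0$, which unwinding the definition~\eqref{defF} of $F$ says precisely that
\[
\mcl_{\sig^{-1}\om}^\theta h_{\sig^{-1}\om}^\theta = \lambda_{\sig^{-1}\om}^\theta\, h_\om^\theta,
\qquad \text{where } \lambda_{\sig^{-1}\om}^\theta = \mcl_{\sig^{-1}\om}^\theta h_{\sig^{-1}\om}^\theta(1),
\]
so $(h_\om^\theta)_{\om\in\Om}$ is an equivariant family generating a one-dimensional equivariant subbundle $\om\mapsto\operatorname{span}\{h_\om^\theta\}$. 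Since at $\theta=0$ this bundle coincides with $Y_1(\om)=\operatorname{span}\{h_\om^0\}$, the top Oseledets space, and since for $\theta$ near $0$ the Lyapunov exponent of this equivariant family equals $\hat\Lambda(\theta)$ by~\eqref{eq:hatLam}, continuity of the Oseledets filtration in $\theta$ (a consequence of quasi-compactness together with the analyticity of $\theta\mapsto\mcl_\om^\theta$ from Lemma~\ref{lem:analyt}) forces $\operatorname{span}\{h_\om^\theta\} = Y_1^\theta(\om)$ for all $\theta$ sufficiently close to $0$; in particular $\dim Y_1^\theta(\om)=1$. Concretely, I expect this to be run through the perturbation-of-Oseledets-spaces machinery used in \cite{DFGTV}: the top space varies continuously, so the spectral gap $\Lambda(\theta) - $ (second exponent) persists, and the known one-dimensional equivariant family must be the top one.

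For part (2), the identity $\Lambda(\theta)=\hat\Lambda(\theta)$ then follows immediately: by Lemma~\ref{Lamhatlemma}(1) we already have $\hat\Lambda(\theta)\le\Lambda(\theta)$, and the reverse inequality comes from part (1), since $h_\om^\theta$ spans the top Oseledets space $Y_1^\theta(\om)$, so the growth rate $\lim_n \tfrac1n\log\|\mcl_\om^{\theta,(n)} h_\om^\theta\|_{1,1}$ equals $\Lambda(\theta)$; but using equivariance $\mcl_\om^{\theta,(n)} h_\om^\theta = \big(\prod_{i=0}^{n-1}\lambda_{\sig^i\om}^\theta\big) h_{\sig^n\om}^\theta$ and the uniform two-sided bounds $1 = h^\theta_\om(1)\le C\|h_\om^\theta\|_{1,1}$ together with $\esssup_\om\|h_\om^\theta\|_{1,1}<\infty$ (which holds because $O(\theta)\in\mc S$ and~\eqref{b}), the Birkhoff-sum argument gives $\lim_n \tfrac1n\log\|\mcl_\om^{\theta,(n)}h_\om^\theta\|_{1,1} = \int\log|\lambda_\om^\theta|\,d\bbp(\om) = \hat\Lambda(\theta)$, exactly as in \cite[Corollary 3.14]{DFGTV}. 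The differentiability of $\Lambda$ near $0$ and $\Lambda'(0)=0$ are then inherited verbatim from Lemma~\ref{Lamhatlemma}(2) and~(4).

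The main obstacle is the continuity/persistence of the top Oseledets space under the perturbation $\theta\mapsto\mcl_\om^\theta$, i.e. justifying that the one-dimensional equivariant family $\{h_\om^\theta\}$ really is the \emph{top} Oseledets space and not some lower one. In the deterministic setting this is classical analytic perturbation theory for an isolated simple eigenvalue; in the random cocycle setting one must invoke stability of the leading Oseledets exponent/space under small (here, analytic) perturbations of the generator, which in \cite{DFGTV} is handled by combining the uniform Lasota--Yorke-type estimates~\eqref{0320_cont}--\eqref{0321_cont} with the structure of the unperturbed splitting. Here the estimates~\eqref{wsly_cont}, \eqref{ubt_cont}, Proposition~\ref{twistQC} and the explicit equivariant family from Lemma~\ref{thm:IFT} supply all the needed ingredients, so the argument of \cite[Theorem 3.12]{DFGTV} transfers with only the cosmetic changes of replacing $\|\cdot\|_{\mathcal B}$ by $\|\cdot\|_{1,1}$ and the $\|\cdot\|_1$-pairing by $h\mapsto h(1)$; I would simply cite that proof and point out the replacements, exactly in the style of the preceding lemmas in this section.
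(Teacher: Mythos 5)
Your proposal is correct and follows essentially the same route as the paper, which likewise obtains part (1) by repeating the argument of \cite[Theorem 3.12]{DFGTV} (using Proposition~\ref{twistQC}) and part (2) by arguing as in \cite[Corollary 3.14]{DFGTV}; your additional unpacking of the equivariance relation $\mcl_{\sig^{-1}\om}^\theta h^\theta_{\sig^{-1}\om}=\lambda^\theta_{\sig^{-1}\om}h^\theta_\om$ and of the two-sided bounds $1=h^\theta_\om(1)\le C\lVert h^\theta_\om\rVert_{1,1}$, $\esssup_\om\lVert h^\theta_\om\rVert_{1,1}<\infty$ is exactly the content of the cited arguments.
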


\subsection{Convexity of $\Lambda(\theta)$}
\label{sec:convexity}
By Proposition~\ref{pmeas}, we can regard $h_\om^0$ as Borel probability measure on $X$ which we will denote by $\mu_\om$. The family $(\mu_\om)_{\om \in \Om}$ induces a probability measure on $\Om \times X$ given by
\[
 \mu(A\times B)=\int_A \mu_\om (B)\, d\mathbb P(\om), \quad \text{for measurable sets $A\subset \Om$ and $B\subset X$.}
\]
Then, $\mu$ is invariant for the skew-product transformation $\tau \colon \Om \times X\to \Om \times X$ defined by
\[
 \tau(\om, x)=(\sigma \om, T_\om (x)), \quad \om \in \Om, \ x\in X.
\]
Obviously, the variance $\Sig^2$ defined in (\ref{variance}) is nonnegative.
From now on we shall assume that $\Sig^2>0$. Otherwise we can invoke and adapt the coboundary case proved  in  Proposition 3 in our paper \cite{DFGTV2}, which says that $\Sig^2=0$ if and only if there exists $r\in L^2_{\mu}(\Omega \times X)$ such that $g=r-r\circ \tau.$

\begin{proposition}\label{lem:Lam''0}
On a neighbourhood of 0,
\begin{enumerate}
\item $\Lambda$ is of class $C^2$  and  $\Lambda''(0)=\Sig^2$.
\item $\Lambda$ is strictly convex.
\end{enumerate}
\end{proposition}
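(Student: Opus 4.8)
The plan is to deduce both items from the infrastructure already assembled: the $C^\infty$ family $\theta\mapsto h^\theta_\omega$ from Lemma~\ref{thm:IFT} and equation~\eqref{eq:vomt}, the identity $\Lam(\theta)=\hat\Lam(\theta)$ near $0$ from Proposition~\ref{cor:LamHatLam}(2), and the explicit first-derivative formula in Lemma~\ref{Lamhatlemma}(2). For part (1), I would first upgrade the regularity: since $O(\theta)$ is $C^\infty$ by Lemma~\ref{thm:IFT}, the integrand defining $\hat\Lam'$ in Lemma~\ref{Lamhatlemma}(2) is itself differentiable in $\theta$ (using that $\lot=h^\theta_\omega(e^{\theta g(\omega,\cdot)})$ is differentiable, is bounded away from $0$ near $0$ since $\lambda^0_\omega=h^0_\omega(1)=1$, and that all the $C^1$-norms of $g(\omega,\cdot)e^{\theta g(\omega,\cdot)}$ etc.\ are uniformly bounded over $\omega$ by~\eqref{obs_cont} and~\eqref{b}); then I would justify differentiating under the integral sign via a dominated-convergence argument, exactly as in the estimates already displayed in the proof of Lemma~\ref{Lamhatlemma}(2), to conclude $\hat\Lam$ — hence $\Lam$ — is $C^2$ near $0$. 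Computing $\Lam''(0)=\hat\Lam''(0)$ then reduces to evaluating the derivative of that integrand at $\theta=0$, using $h^0_\omega(1)=1$, the centering condition~\eqref{zeromean_cont} (so that $\lambda^0_\omega=h^0_\omega(g(\omega,\cdot))=0$... wait, $\lambda^0_\omega = h^0_\omega(e^{0})=1$), $O(0)=0$, and the formula~\eqref{d2f} for $D_2F(0,0)^{-1}$ which gives $O'(0)(\omega)=-\sum_{j\ge0}\mcl^{(j)}_{\sigma^{-j}\omega}\big(g(\sigma^{-j}\omega,\cdot)\,h^0_{\sigma^{-j}\omega}\big)$ after differentiating $F(\theta,O(\theta))=0$ at $0$ and using Proposition~\ref{difF} for $D_1F(0,0)$.

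The bulk of the work is the bookkeeping to identify the resulting expression with $\Sig^2$ as defined in~\eqref{variance}. I would expand $\Lam''(0)$ as a sum of a "diagonal" term coming from $h^0_\omega(g(\omega,\cdot)^2\cdot)$-type contributions and a "cross" term coming from the $O'(0)$ piece; then, using Proposition~\ref{pmeas} to regard $h^0_\omega=\mu_\omega$ and the equivariance $\mcl_\omega h^0_\omega=h^0_{\sigma\omega}$ together with Lemma~\ref{duallemma} (so that $h^0_\omega(g(\omega,\cdot)\cdot\varphi\circ T^{(n)}_\omega)=\int g(\omega,x)\,\varphi(T^{(n)}_\omega x)\,d\mu_\omega(x)$), I would rewrite each summand $\mcl^{(j)}_{\sigma^{-j}\omega}(g(\sigma^{-j}\omega,\cdot)h^0_{\sigma^{-j}\omega})(g(\omega,\cdot))$, integrated over $\Omega$ and shifted by $\sigma^j$-invariance of $\bbp$, as $\int_{\Omega\times X} g(\omega,x)\,g(\tau^j(\omega,x))\,d\mu(\omega,x)$. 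Matching constants — a factor $2$ and the $n=0$ term — gives precisely~\eqref{variance}. This is the step I expect to be the main obstacle: it is the analogue of \cite[Appendix B]{DFGTV} and requires careful tracking of which operator acts on which fiber, the shift of index under $\bbp$-invariance, and convergence of the series (guaranteed by the exponential bound~\eqref{DEC_cont} on the zero-average subspace, since $g(\sigma^{-j}\omega,\cdot)h^0_{\sigma^{-j}\omega}$ has zero integral by~\eqref{zeromean_cont}).

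For part (2), strict convexity on a (possibly smaller) neighbourhood of $0$ follows from continuity of $\Lam''$ together with $\Lam''(0)=\Sig^2>0$ (our standing assumption in \S\ref{sec:convexity}): $\Lam''>0$ on a neighbourhood, hence $\Lam$ is strictly convex there. I would remark that the degenerate case $\Sig^2=0$ is handled separately via the coboundary characterization of Proposition~3 of \cite{DFGTV2} as already noted before the proposition statement. The only mild subtlety is that all objects are a priori complex-analytic in $\theta$, so I would restrict to real $\theta$ throughout part (1)–(2), noting $\hat\Lam$ is real-valued there and its real-variable second derivative is what enters; the formula in Lemma~\ref{Lamhatlemma}(2) already takes real parts, so this restriction is harmless.
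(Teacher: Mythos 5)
Your outline follows essentially the same route as the paper, which for part (1) simply defers to the second-derivative computation of \cite[Lemma 3.15]{DFGTV} (differentiate $\hat\Lambda$ twice using the implicit-function-theorem solution $O(\theta)$, evaluate at $0$ with $O(0)=0$, $h^0_\om(1)=1$ and the centering condition, and identify the resulting series with \eqref{variance} via the duality and $\sigma$-invariance of $\bbp$), and for part (2) uses continuity of $\Lambda''$ together with $\Lambda''(0)=\Sig^2>0$. One correction for the write-up: since $O'(0)=-D_2F(0,0)^{-1}D_1F(0,0)$, the minus in \eqref{d2f} cancels, and $D_1F(0,0)_\om=\mcl_{\sigma^{-1}\om}\bigl(g(\sigma^{-1}\om,\cdot)h^0_{\sigma^{-1}\om}\bigr)$ already involves one application of $\mcl$, so the correct formula is $O'(0)(\om)=+\sum_{n\ge 1}\mcl^{(n)}_{\sigma^{-n}\om}\bigl(g(\sigma^{-n}\om,\cdot)h^0_{\sigma^{-n}\om}\bigr)$ (positive sign, sum starting at $n=1$); your displayed version would make the cross term $-2\sum_{n\ge 0}$ and lead to $\Lambda''(0)=-\Sig^2$.
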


\begin{proof}
The proof of part 1 is identical to the proof of Lemma 3.15 \cite{DFGTV} and part (ii) is a direct consequence of part 2.
\end{proof}

\section{Large deviation principle and central limit theorem}
\label{sec:ldp_clt}

For $\theta \in \C$ sufficiently close to $0$, we have that $\dim Y_1^\theta (\om)=1$. Choose $h_\om^\theta \in Y_1^\theta (\om)$ such that $h_\om^\theta(1)=1$. We note that
$h_\om^\theta$ is actually given by~\eqref{eq:vomt}. Furthermore, let $\lambda_\om^\theta \in \mathbb C$ be such that
\begin{equation}
\label{eq:def_lambdas}
\mathcal{L}^{\theta}_\omega h^\theta_\omega=\lambda^\theta_\omega h^\theta_{\sigma\omega}.
\end{equation}
Note that
\begin{equation}\label{eq:lam}
\lambda^\theta_\omega= h_\om^\theta (e^{\theta g(\om, \cdot)}),
\end{equation}
which coincides with~\eqref{eq:int}.
Next, let us fix $\phi^\theta_\omega \in Y^{*\,\theta}_\om$ so that $\phi^\theta_\omega(h^\theta_\omega)=1$. Furthermore, one can show (see~\cite[p. 30]{DFGTV}) that
\begin{equation} \label{eq:DualEig}
(\mathcal{L}^{\theta}_\omega)^*\phi^\theta_{\sigma\omega}=\lambda^\theta_\omega \phi^\theta_{\omega}.
\end{equation}
\begin{remark}
\label{phidiffrem}
The differentiability of $\theta\mapsto\phi^\theta$ follows similarly to the presentation in \cite[Appendix C]{DFGTV}. The proofs of Lemmas C.4 and C.6 make use of regularity estimates (90) and (99) in terms of variation;  in the present work, these estimates may be replaced with $C^1$ estimates.  In the proof of Lemma C.2, the expression $\|v_\om^0\|_1$ may be replaced with $\lvert h_\om^0(1)\rvert$ and bounded by (\ref{dist_cont}) in the present work.
\end{remark}
In addition, let
\[
 \mathcal B^{1,1}=Y_\om^\theta \oplus H_\om^\theta \quad \text{and} \quad (\mathcal B^{1,1})^*=Y^{*\, \theta}_\om \oplus H^{*\, \theta}_\om
\]
be the Oseledets splitting of cocycles $(\mcl_\om^\theta)_{\om \in \Om}$ and $((\mcl_\om^{\theta})^*)_{\om \in \Om}$ respectively into a direct sum of the top space and the sum of all other
Oseledets subspaces.

\subsection{Large deviation principle}

The following lemmas link the limits of characteristic functions of Birkhoff sums to the function $\Lambda$.
 \begin{lemma}\label{L:growthExpSums}
Let $\theta\in \C$ be sufficiently close to $0$ and
 $h\in \mathcal B^{1,1}$ be such that $h\notin H_\om^\theta$, i.e. $\phi^\theta_\omega (h) \neq 0$.
Then,
\[
\lim_{n\to\infty}\frac{1}{n} \log \Big| h(e^{\theta S_n g(\om, \cdot)}) \Big| =  \Lam(\theta).
\]
\end{lemma}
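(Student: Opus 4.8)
The plan is to exploit the Oseledets decomposition of the twisted cocycle $(\mcl_\om^\theta)_{\om\in\Om}$ together with the duality identity from Lemma~\ref{dualtwist}. First I would use \eqref{coding} to rewrite $h(e^{\theta S_n g(\om,\cdot)})$ in spectral terms. Applying the operator $\mcl_\om^{\theta,(n)}$ to $h$ and testing against the constant function $1$ gives
\[
(\mcl_\om^{\theta,(n)} h)(1) = h\big(e^{\theta S_n g(\om,\cdot)}(1\circ T_\om^n)\big) = h\big(e^{\theta S_n g(\om,\cdot)}\big),
\]
so it suffices to control the asymptotics of $\lvert (\mcl_\om^{\theta,(n)} h)(1)\rvert$.

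Next I would decompose $h = \phi_\om^\theta(h)\, h_\om^\theta + h'$ along the splitting $\mathcal B^{1,1} = Y_\om^\theta \oplus H_\om^\theta$, where $h' \in H_\om^\theta$. By equivariance and \eqref{eq:def_lambdas}, the top-space part evolves as
\[
\mcl_\om^{\theta,(n)}\big(\phi_\om^\theta(h)\, h_\om^\theta\big) = \phi_\om^\theta(h)\,\lambda_\om^\theta \lambda_{\sigma\om}^\theta\cdots\lambda_{\sigma^{n-1}\om}^\theta\, h_{\sigma^n\om}^\theta,
\]
whose image under evaluation at $1$ is $\phi_\om^\theta(h)\prod_{j=0}^{n-1}\lambda_{\sigma^j\om}^\theta$ (using $h_{\sigma^n\om}^\theta(1)=1$). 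The Birkhoff ergodic theorem applied to $\om\mapsto\log\lvert\lambda_\om^\theta\rvert = \log\lvert h_\om^\theta(e^{\theta g(\om,\cdot)})\rvert$ gives, for \paeom,
\[
\lim_{n\to\infty}\frac1n\log\Big\lvert\prod_{j=0}^{n-1}\lambda_{\sigma^j\om}^\theta\Big\rvert = \int\log\lvert h_\om^\theta(e^{\theta g(\om,\cdot)})\rvert\,d\bbp(\om) = \hat\Lambda(\theta) = \Lambda(\theta),
\]
invoking Proposition~\ref{cor:LamHatLam}(2). Meanwhile, the contribution $h'\in H_\om^\theta$ decays strictly faster: by the multiplicative ergodic theorem the second Lyapunov exponent $\Lambda_2(\theta)$ of the twisted cocycle is strictly below $\Lambda(\theta)$ (quasi-compactness, Proposition~\ref{twistQC}), so $\limsup_n\frac1n\log\lVert\mcl_\om^{\theta,(n)}h'\rVert_{1,1}\le\Lambda_2(\theta)<\Lambda(\theta)$, and then \eqref{dist_cont} controls the evaluation at $1$ by the $\|\cdot\|_{1,1}$ norm. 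Since $\phi_\om^\theta(h)\ne0$ by hypothesis, the top term dominates and the claimed limit follows.

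The main obstacle is the lower bound — ensuring the top-space term does not get cancelled and genuinely achieves the rate $\Lambda(\theta)$. One must check that $\phi_\om^\theta(h)\ne 0$ (guaranteed by hypothesis $h\notin H_\om^\theta$), that $\lvert\lambda_{\sigma^j\om}^\theta\rvert$ is bounded away from $0$ uniformly enough that $\log\lvert\lambda_\om^\theta\rvert\in L^1(\bbp)$ (which follows from $\essinf_\om\lvert H(\theta,O(\theta))(\om)\rvert\ge\frac12$ established in Lemma~\ref{lem:FwellDef}, together with the upper bound via \eqref{ubt_cont} and \eqref{dist_cont}), and that the triangle-inequality lower bound $\lvert (\mcl_\om^{\theta,(n)}h)(1)\rvert \ge \lvert\phi_\om^\theta(h)\prod_j\lambda_{\sigma^j\om}^\theta\rvert - \lvert\mcl_\om^{\theta,(n)}h'(1)\rvert$ remains positive for large $n$ once the exponential gap between $\Lambda(\theta)$ and $\Lambda_2(\theta)$ is used. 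This is exactly the argument used in \cite[Lemma 4.1]{DFGTV}, so I would follow that proof closely, substituting $\lvert\cdot(1)\rvert$ for the $\|\cdot\|_1$-norm via \eqref{dist_cont} and $\|\cdot\|_{1,1}$ for the $\mathcal{B}$-norm throughout.
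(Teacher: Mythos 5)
Your proposal is correct and follows essentially the same route as the paper, which simply ports the spectral argument of Lemma 4.2 of \cite{DFGTV}: rewrite $h(e^{\theta S_n g(\om,\cdot)})$ as $(\mcl_\om^{\theta,(n)}h)(1)$ via Lemma~\ref{dualtwist}, split $h$ along $Y_\om^\theta\oplus H_\om^\theta$, apply Birkhoff to $\log\lvert\lambda_\om^\theta\rvert$ to get $\hat\Lambda(\theta)=\Lambda(\theta)$ for the top component, and beat the remainder using the Lyapunov gap. Your attention to the integrability of $\log\lvert\lambda_\om^\theta\rvert$ and to the lower bound via the triangle inequality matches exactly the checks needed in that proof.
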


\begin{proof}
Identical to the proof of Lemma 4.2~\cite{DFGTV} with $h\in \mathcal{B}^{1,1}$ in the present paper playing the role of $\int f\cdot\ dm$ in the proof of Lemma 4.2~\cite{DFGTV}, and Lemma \ref{coding} replacing (43)~\cite{DFGTV}.
 \end{proof}

\begin{lemma}\label{need}
For all complex $\theta$ in a neighborhood of 0, and \paeom, we have that
\[
 \lim_{n\to \infty} \frac 1 n \log  \Big|\int e^{\theta S_n g(\om, x)} \, d\mu_\om(x) \Big|=\Lambda (\theta).
\]
\end{lemma}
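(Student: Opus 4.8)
The plan is to reduce the statement to Lemma~\ref{L:growthExpSums} by identifying $\int e^{\theta S_n g(\om, x)}\, d\mu_\om(x)$ with $h(e^{\theta S_n g(\om, \cdot)})$ for a suitable $h\in\mathcal{B}^{1,1}$, and then checking that this $h$ is not in the complement $H_\om^\theta$ of the top Oseledets space. The natural candidate is $h = h_\om^0$, since Proposition~\ref{pmeas} tells us that $h_\om^0$, viewed as a distribution, is precisely the probability measure $\mu_\om$ on $X$. Concretely, by~\eqref{dcont} and the fact that $h_\om^0$ acts as integration against $\mu_\om$ on $C^1$ (and, by density and the extension in Proposition~\ref{pmeas}, on $C^0$), we have $h_\om^0(e^{\theta S_n g(\om, \cdot)}) = \int e^{\theta S_n g(\om, x)}\, d\mu_\om(x)$ for \paeom, provided we first note that $e^{\theta S_n g(\om, \cdot)}\in C^r\subset C^1$ (it is a finite product of $C^r$ functions composed with the smooth maps $T_\om^{(i)}$, using~\eqref{obs_cont}).

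First I would make the identification above precise: using Lemma~\ref{dualtwist} with $\varphi=1$ gives $(\mathcal{L}_\om^{\theta,(n)}h_\om^0)(1) = h_\om^0(e^{\theta S_n g(\om, \cdot)}(1\circ T_\om^n)) = h_\om^0(e^{\theta S_n g(\om, \cdot)})$, and the right-hand side equals $\int e^{\theta S_n g(\om, x)}\, d\mu_\om(x)$ by the measure interpretation of $h_\om^0$. So it remains to show $h_\om^0\notin H_\om^\theta$, i.e.\ $\phi_\om^\theta(h_\om^0)\neq 0$, for \paeom\ and $\theta$ near $0$. Here the key point is continuity in $\theta$: at $\theta=0$ we have $h_\om^0=h_\om^0\in Y_1^0(\om)$ (the top Oseledets space, which is one-dimensional and spanned by $h_\om^0$), so $\phi_\om^0(h_\om^0)=1\neq 0$. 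Since $\theta\mapsto h^\theta$ and $\theta\mapsto\phi^\theta$ are continuous (indeed differentiable — see Lemma~\ref{thm:IFT} and Remark~\ref{phidiffrem}), and since the normalization $\phi_\om^\theta(h_\om^\theta)=1$ forces $\phi_\om^\theta$ to have controlled norm, the quantity $\phi_\om^\theta(h_\om^0)$ depends continuously on $\theta$ and hence stays bounded away from $0$ for $\theta$ in a (possibly $\om$-independent) neighborhood of $0$. One should be slightly careful that the neighborhood can be taken uniform in $\om$; this follows because $h^\theta$, $\phi^\theta$ and $h^0$ all lie in the space $\mathcal{S}'$ of essentially bounded measurable families and the relevant estimates in Lemma~\ref{thm:IFT} and Remark~\ref{phidiffrem} are uniform over $\om$.

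With these two facts in hand, I would simply invoke Lemma~\ref{L:growthExpSums} with $h=h_\om^0$ to conclude
\[
\lim_{n\to\infty}\frac{1}{n}\log\Big|\int e^{\theta S_n g(\om, x)}\, d\mu_\om(x)\Big| = \lim_{n\to\infty}\frac{1}{n}\log\big|h_\om^0(e^{\theta S_n g(\om, \cdot)})\big| = \Lambda(\theta)
\]
for \paeom.

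**Main obstacle.** The routine parts — the distributional identity via Lemma~\ref{dualtwist} and the measure interpretation of $h_\om^0$ — are quick. The genuinely delicate step is verifying $\phi_\om^\theta(h_\om^0)\neq 0$ uniformly over $\om$ on a fixed neighborhood of $\theta=0$: this requires the differentiability (or at least continuity) of $\theta\mapsto\phi^\theta$ together with uniform-in-$\om$ norm bounds, which is exactly the content that Remark~\ref{phidiffrem} imports from \cite[Appendix C]{DFGTV}. If one wished to avoid relying on the regularity of $\phi^\theta$, an alternative is to argue directly: decompose $h_\om^0 = c_\om(\theta) h_\om^\theta + w_\om(\theta)$ with $w_\om(\theta)\in H_\om^\theta$, observe $c_\om(0)=1$, and use continuity of the Oseledets projections in $\theta$ (which holds by the perturbation results underlying Proposition~\ref{cor:LamHatLam}) to keep $c_\om(\theta)$ bounded away from zero. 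Either way, the obstacle is purely the $\theta$-regularity of the spectral data near $0$, which has already been established upstream.
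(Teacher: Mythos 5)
Your proposal is correct and follows essentially the same route as the paper: the paper's proof likewise identifies $\int e^{\theta S_n g(\om,x)}\,d\mu_\om(x)$ with $h_\om^0(e^{\theta S_n g(\om,\cdot)})$ and then runs the argument of \cite[Lemma 4.3]{DFGTV}, whose crux is exactly the non-degeneracy $\phi_\om^\theta(h_\om^0)\neq 0$ obtained from the regularity of $\theta\mapsto\phi^\theta$ recalled in Remark~\ref{phidiffrem}. Your expanded justification of the two steps (the distributional identity via Lemma~\ref{dualtwist} and the uniform-in-$\om$ continuity argument keeping $\phi_\om^\theta(h_\om^0)$ away from zero) matches what the paper imports implicitly.
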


\begin{proof}
We follow the proof of Lemma 4.3 \cite{DFGTV}, observing that
 \[
  \int e^{\theta S_n g(\om, x)} \, d\mu_\om(x)=h_\om^0(e^{\theta S_n g(\om, \cdot)}),
 \]
 and recalling the differentiability of the map $\theta \mapsto \phi^\theta$ in Remark \ref{phidiffrem}.
\end{proof}

\begin{proof}[Proof of Theorem \ref{thm:ldt}]
Following the proof of Theorem A \cite{DFGTV}, by applying Proposition~\ref{lem:Lam''0} and Lemma~\ref{need}, together with the G\"artner-Ellis theorem (\cite{hennion} and Theorem 4.1 \cite{DFGTV}), we obtain the large deviation principle.
\end{proof}

\subsection{Central limit theorem}
The proof of the following result is completely analogous to the proof of~\cite[Lemma 4.4]{DFGTV}.
\begin{lemma}\label{lem:UnifExpDecayY2}
There exist $C>0, 0<r<1$ such that for every $\theta \in \mathbb{C}$ sufficiently close to 0, every $n\in \N$ and $\paeom$, we have
\begin{equation}
\Big|  \mathcal L_\om^{\theta, (n)}(h_\om^0 -\phi_\om^{\theta}(h_\om^0) h_{\om}^{\theta}) (1) \Big|
\leq Cr^n .
\end{equation}
\end{lemma}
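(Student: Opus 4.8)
The plan is to exploit the Oseledets splitting $\mathcal B^{1,1}=Y_\om^\theta\oplus H_\om^\theta$ for the twisted cocycle: the point of the combination $h_\om^0-\phi_\om^\theta(h_\om^0)h_\om^\theta$ is precisely that it is the projection of $h_\om^0$ onto the complementary space $H_\om^\theta$ along $Y_\om^\theta$. Indeed, applying $\phi_\om^\theta$ and using $\phi_\om^\theta(h_\om^\theta)=1$ gives $\phi_\om^\theta(h_\om^0-\phi_\om^\theta(h_\om^0)h_\om^\theta)=0$, so $h_\om^0-\phi_\om^\theta(h_\om^0)h_\om^\theta\in H_\om^\theta$. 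By equivariance of the splitting, $\mathcal L_\om^{\theta,(n)}$ maps $H_\om^\theta$ into $H_{\sigma^n\om}^\theta$, and on this space the cocycle contracts at an exponential rate strictly faster than the top Lyapunov exponent $\Lambda(\theta)$. Since $\Lambda(\theta)\to\Lambda(0)=0$ as $\theta\to 0$ (by Proposition~\ref{cor:LamHatLam}) while the second Lyapunov exponent of the untwisted cocycle is strictly negative --- from \eqref{DEC_cont}, elements vanishing at $1$ decay like $e^{-\lambda n}$ --- there is a uniform gap: for $\theta$ in a small enough complex neighbourhood of $0$ one has uniform (in $\om$ and $\theta$) constants $C>0$, $0<r<1$ with $\|\mathcal L_\om^{\theta,(n)}v\|_{1,1}\le Cr^n\|v\|_{1,1}$ for all $v\in H_\om^\theta$. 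Then, using \eqref{dist_cont} to pass from the $\|\cdot\|_{1,1}$-norm to the value at $1$,
\[
\Big|\mathcal L_\om^{\theta,(n)}(h_\om^0-\phi_\om^\theta(h_\om^0)h_\om^\theta)(1)\Big|
\le C\|\mathcal L_\om^{\theta,(n)}(h_\om^0-\phi_\om^\theta(h_\om^0)h_\om^\theta)\|_{1,1}
\le C r^n\|h_\om^0-\phi_\om^\theta(h_\om^0)h_\om^\theta\|_{1,1},
\]
and the last factor is bounded uniformly in $\om$ and $\theta$ by \eqref{b}, boundedness of $\theta\mapsto h^\theta$ (from Lemma~\ref{thm:IFT}/\eqref{eq:vomt}), and boundedness of $\om\mapsto\phi_\om^\theta(h_\om^0)$, which follows from \eqref{dist_cont} together with the uniform control on $\phi^\theta$ noted in Remark~\ref{phidiffrem}. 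Absorbing these bounds into the constant yields the claim.

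The steps, in order: (i) verify the algebraic identity showing $h_\om^0-\phi_\om^\theta(h_\om^0)h_\om^\theta\in H_\om^\theta$; (ii) establish equivariance of $H_\om^\theta$ and the uniform exponential decay estimate $\|\mathcal L_\om^{\theta,(n)}|_{H_\om^\theta}\|\le Cr^n$ for $\theta$ near $0$ --- this is where the uniform spectral gap for the twisted cocycle is needed, and it is obtained by a perturbative argument comparing with the untwisted cocycle exactly as in \cite[Lemma 4.4]{DFGTV}, using Proposition~\ref{twistQC}, \eqref{DEC_cont}, and the continuity $\Lambda(\theta)\to 0$; (iii) bound $\|h_\om^0-\phi_\om^\theta(h_\om^0)h_\om^\theta\|_{1,1}$ uniformly in $(\om,\theta)$; (iv) apply \eqref{dist_cont} to convert the norm bound into the bound on the value at $1$.

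The main obstacle is step (ii): one needs the exponential decay rate $r$ and the constant $C$ to be \emph{uniform} both over $\om\in\Om$ and over $\theta$ in a neighbourhood of $0$, not merely for each fixed $\theta$. This uniformity is what makes the CLT work, and it rests on the fact that the second Lyapunov exponent of the untwisted cocycle is strictly below $0=\Lambda(0)$ (quantified by the rate $\lambda$ in \eqref{DEC_cont}) and varies continuously, so that for $|\theta|$ small the twisted gap $\Lambda(\theta)-\lambda_2(\theta)$ stays bounded away from $0$; the quantitative perturbative estimates needed here are precisely those carried out in \cite[Lemma 4.4]{DFGTV}, which transfer verbatim once $\|\cdot\|_{\mathcal B}$ is replaced by $\|\cdot\|_{1,1}$ and the action of an element of $\mathcal B^{1,1}$ against the constant function $1$ plays the role of the integral $\int f\,dm$. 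Hence the proof reduces, as the paper states, to invoking that argument.
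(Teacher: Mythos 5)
Your proposal is correct and follows essentially the same route as the paper, which simply declares the result ``completely analogous to the proof of \cite[Lemma 4.4]{DFGTV}'': you identify $h_\om^0-\phi_\om^\theta(h_\om^0)h_\om^\theta$ as the projection of $h_\om^0$ onto $H_\om^\theta$, invoke the uniform (in $\om$ and $\theta$ near $0$) exponential contraction of the twisted cocycle on the complementary Oseledets space from the perturbative argument of \cite[Lemma 4.4]{DFGTV}, and convert the norm estimate to the evaluation at $1$ via \eqref{dist_cont} together with the uniform bounds \eqref{b}, Lemma~\ref{thm:IFT} and Remark~\ref{phidiffrem}. This is exactly the intended argument; no gaps.
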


\begin{proof}[Proof of Theorem \ref{thm:clt}]
The proof is identical to the proof of Theorem B \cite{DFGTV}, with the same modifications as those listed in the proof of Lemma \ref{L:growthExpSums}.
Differentiability of $\theta\mapsto\phi^\theta$ is used (see Remark \ref{phidiffrem}) as well as Lemma \ref{dualtwist} to obtain the coding of the Birkhoff sums via the twisted transfer operator.  Lemma 4.5 \cite{DFGTV} is proved is proved in an identical way.
\end{proof}

\section{Local central limit theorem}
We begin by recalling the concept of $\mathbb P$-continuity which we will also use in section 9.2.1.   We say that  our cocycle is $\mathbb P$-continuous (a concept introduced in \cite{thieullen}) if   the map $\omega \mapsto T_\omega$ has  $\mathbb P$-a.e. a countable range (besides being measurable).
This implies that $\omega\mapsto \mathcal{L}_\omega$  is continuous on each of  countably many Borel subsets of $\Omega$, whose union has full $\mathbb{P}$ measure. We refer to~\cite{FLQ2} for details.

In our earlier paper~\cite{DFGTV} we proved  the local central limit theorem in the non-arithmetic case (we also separately treated the arithmetic case) under the condition that  we called (L) in the introduction, namely
\begin{itemize}
\item (L) For $\mathbb{P}$-a.e. $\omega\in \Omega$ and for every compact interval $J\subset \mathbb{R}\backslash{\{0\}}$ there exists $C=C(\omega)>0$ and $\rho\in (0,1)$ such that
    \begin{equation}\label{LD}
    ||\mathcal{L}^{it, (n)}||_{\mathcal{B}}\le C \rho^n, \ \text{for} \ t\in J\ \text{and}\  n\ge 0.
   \end{equation}
   \end{itemize}
   Moreover
 under the assumption that the cocycle is $\mathbb P$-continuous,  we proved~\cite[Lemma 4.7.]{DFGTV}  that (L) is equivalent to the  following {\em aperiodicity}  condition
\begin{itemize}

        \item  For every $t\in \mathbb{R}$, either $\Lambda(it)<0$ or the cocycle $\mathcal{L}_{\omega}^{it}$ is quasicompact and the equation
            $$
            e^{itg(\omega,x)}\mathcal{L}_{\omega}^{*}\psi_{\sigma\omega}=\gamma^{it}_{\omega}
            \psi_{\omega},
            $$
            where $\gamma_{\omega}^{it}\in S^1,$   $\mathcal{L}_{\omega}^{*}$ denotes the adjoint of $\mathcal{L}_{\omega}$ and $\psi_{\omega}\in \mathcal{B}^*$, only has a measurable non-zero solution $\psi:=\{\psi_{\omega}\}_{\omega \in \Omega}$ when $t=0$. In this case $\gamma^0_{\omega}=1$ and $\psi_{\omega}(f)=\int f dm $ (up to a scalar multiplicative factor)  for $\mathbb{P}$-a.e. $\omega\in \Omega$.
\end{itemize}
In our present Anosov setting the bound (\ref{LD}) will be replaced with the following:
\begin{equation}\label{LD2}
\lVert \mathcal L_{\om}^{it, (n)}\rVert_{1,1} \le C\rho^n, \quad \text{for $t\in J$ and $n\ge 0$.}
\end{equation}
   Still in the present setting, we can not prove at the moment the equivalence between (L) and the aperiodicity condition although several of the technical steps which formed the skeleton of our proof of~\cite[Lemma 4.7.]{DFGTV} for expanding maps and functions of bounded variation can be transferred to Anosov maps and the anisotropic Banach spaces used in  this work.

\begin{proof}[Proof of Theorem \ref{thm:lclt}]

The proof assuming (L) follows now exactly as in the proof of Theorem C \cite{DFGTV}, with the following minor modifications.  We use Lemma \ref{dualtwist} to obtain the coding of the Birkhoff sums through powers of the twisted transfer operator.  The control of term (III) in the proof of Theorem C \cite{DFGTV} uses Lemma \ref{lem:UnifExpDecayY2} in place of Lemma 4.4 \cite{DFGTV}.
 The control of term (IV) in the proof of Theorem C \cite{DFGTV}  uses (L1) in place of the analogous condition (C5) in \cite{DFGTV}.
\end{proof}
 As mentioned in the introduction, Hafouta and Kifer \cite{HafoutaKifer}, Section 2.10,  formulated a sort of classical aperiodicity condition in the random setting which allowed them to recover condition (L). We now state their assumptions and verify that some of them hold in our setting. This and additional hypothesis on the driving map $\sigma$ will give us a new proof of the local central limit theorem. \\
 We first recall the Hafouta and Kifer assumptions adapted to our situations:
 \begin{itemize}
 \item HK A1: The probability measure $\mathbb{P}$ assigns positive measure to open sets, $\sigma$ is a homeomorphism and there exist $\omega_0\in \Omega$ and $m_0\in \mathbb{N}$ so that $\sigma^{m_0}\omega_0=\omega_0.$ Moreover  for each $i\in \{0, 1, \ldots, m_0-1\}$, there exists a neighborhood of $\sigma^i \omega_0$  on which the map $\omega \mapsto T_\omega$ is constant (note that on this neighborhood we also have that the map $\omega \mapsto \mathcal L_\omega$ is constant).
     \item HK A2: For each compact interval $J\subset \mathbb R$, the family of maps $\omega \rightarrow \mathcal{L}_{\omega}^{it},$ where $t \in J$, is equicontinuous at the points $\omega=\sigma^i\omega_0, 0\le i\le m_0$ with respect to the operator norm, and there exists a constant $B=B(J)\ge 1,$ such that $\mathbb{P}$-a.e.,
         \begin{equation}\label{PO}
         \lVert \mathcal L_{\om}^{it, (n)}\rVert_{1,1} \le B,
          \end{equation}
          for any $n\in \mathbb{N}$ and $t\in J.$
          \item HK A3: For any compact interval $J\subset \mathbb R$ that does not contain the origin, there exists constants $c=c(J)>0$ and $b=b(J)\in (0,1)$ such that
              \begin{equation}\label{QO}
\lVert {\bf L}_{it}^{s}\rVert_{1,1} \le cb^s,
              \end{equation}
             for any $s\in \mathbb{N}$ and $t\in J$, where the (deterministic) operator ${\bf L}_{it}$ is defined as ${\bf L}_{it}:=\mathcal{L}^{it, (m_0)}_{\omega_{0}}.$
\end{itemize}
Under these three assumptions, it was proved in Lemma 2.10.4 \cite{HafoutaKifer} that condition (L) holds.

We now adapt the previous requirements to our setting. Assumption HK A1 is easily satisfied by requiring that $\sigma$ is a homeomorphism that  has at least one periodic  point $\omega_0$ and by building the cocycle in a way that $\omega \mapsto T_\omega$ is locally constant at all points that belong to the orbit of $\omega_0$. Of course, we also need to  work with $\mathbb P$ that assigns positive measure to all open
nonempty subsets of $\Omega$.

 Assumption HK A3 is equivalent to requiring the classical aperiodicity condition for deterministic systems (see~\cite{hennion}).  Namely, it is sufficient to require that the spectral radius of $\mathcal{L}^{it, (m_0)}_{\omega_{0}}$ is strictly less than $1$ for all $t\neq 0$.  Then, taking $J\subset \mathbb R$ a  compact interval, we can find $c=c(J)>0$ and $b=b(J)\in (0,1)$ such that~\eqref{QO} holds  by  arguing as in~\cite[Proof of Lemma 4.7]{DFGTV} (although the argument is  simpler in our present setting   since we deal with a  deterministic situation).

 Assumption HK A2 should instead be checked under suitable conditions since it relies on the properties of the maps, of the observable and of the functional spaces: we now show that it holds for our Anosov maps. This plus the  necessary conditions in HK A1 and HK A3, will give us a proof of the LCLT in the non-arithmetic case.
To verify HK A2 we  first prove the equicontinuity property (under suitable conditions) in Lemma~\ref{EQUI} and then in Lemma~\ref{lem:strongLY} we will develop  a Lasota-Yorke  inequality from which we can obtain   the bound (\ref{PO}). The latter will be obtained from a Lasota--Yorke inequality for the twisted operator $\mathcal L_{\om}^{it, (n)}$ and with respect to the norm $\lVert\cdot\rVert_{1,1}$ (strong) and $\lVert\cdot\rVert_{0,2}$ (weak),  and the fact that $\lVert h\rVert_{0,2}\le \lVert h\rVert_{1,1}$ for any $h\in \mathcal{B}^{1,1}$,
as we did in (\ref{wsly_cont}) for the non-twisted  operator.
\begin{lemma}\label{EQUI}
Let us suppose $(\Omega, \mathcal F, \mathbb P, \sigma)$ is  an invertible and  ergodic measure-preserving dynamical  system verifying HK A1 and moreover  for each $i\in \{0, 1, \ldots, m_0-1\}$, the observable $g$ satisfies
\begin{equation}\label{x}
\lim_{\omega \to \sigma^i \omega_0} \lVert g(\omega, \cdot)-g(\sigma^i \omega_0, \cdot)\rVert_{C^2}=0.
\end{equation}
Furthermore, let $J\subset \R$ be a compact interval.
 Then, the family of maps $\{ \omega \mapsto \mathcal L_\omega^{it}: t\in J\}$ is equicontinuous in all points $\omega$ that belong to the orbit of $\omega_0$.
\end{lemma}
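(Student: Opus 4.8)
The plan is to reduce the equicontinuity of the twisted operators to two ingredients that are already essentially available: strong continuity of the untwisted map $\omega\mapsto\mathcal{L}_\omega$ near the orbit of $\omega_0$ (which holds because $\omega\mapsto T_\omega$ is locally constant there by HK~A1, so in fact $\mathcal{L}_\omega$ is \emph{locally constant} near each $\sigma^i\omega_0$), and the continuity of the multiplication operators $h\mapsto e^{it g(\omega,\cdot)}h$ in $\omega$, uniformly over $t\in J$. Fix $i\in\{0,\dots,m_0-1\}$ and write $\omega_*=\sigma^i\omega_0$. For $\omega$ in the neighbourhood of $\omega_*$ on which $T_\omega\equiv T_{\omega_*}$ we have $\mathcal{L}_\omega=\mathcal{L}_{\omega_*}$, hence
\[
\mathcal{L}_\omega^{it}h-\mathcal{L}_{\omega_*}^{it}h
=\mathcal{L}_{\omega_*}\bigl((e^{itg(\omega,\cdot)}-e^{itg(\omega_*,\cdot)})h\bigr),
\]
so by \eqref{ub_cont},
\[
\lVert \mathcal{L}_\omega^{it}h-\mathcal{L}_{\omega_*}^{it}h\rVert_{1,1}
\le K\,\lVert (e^{itg(\omega,\cdot)}-e^{itg(\omega_*,\cdot)})h\rVert_{1,1}.
\]
It therefore suffices to bound the multiplier norm $\lVert (e^{itg(\omega,\cdot)}-e^{itg(\omega_*,\cdot)})h\rVert_{1,1}$ by $C(J)\,\lVert e^{itg(\omega,\cdot)}-e^{itg(\omega_*,\cdot)}\rVert_{C^2}\,\lVert h\rVert_{1,1}$, uniformly over $t\in J$ and $\omega$ near $\omega_*$; this is exactly the kind of estimate carried out in the proof of Proposition~\ref{twistcty}, where $\lVert \varphi\, h\rVert_{1,1}\lesssim \lVert\varphi\rVert_{C^2}\lVert h\rVert_{1,1}$ is obtained from the definitions \eqref{auxnorms}--\eqref{norms} of the $\lVert\cdot\rVert_{p,q}^\sim$ seminorms (a multiplier on $\mathcal{B}^{1,1}$ acting boundedly in $C^2$). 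Concretely I would invoke \cite[Lemma~3.2]{GL} in the form $\lVert \varphi\cdot h\rVert_{1,1}\le C\lVert\varphi\rVert_{C^2}\lVert h\rVert_{1,1}$, applied with $\varphi=e^{itg(\omega,\cdot)}-e^{itg(\omega_*,\cdot)}$.

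Next I would control $\lVert e^{itg(\omega,\cdot)}-e^{itg(\omega_*,\cdot)}\rVert_{C^2}$ in terms of $\lVert g(\omega,\cdot)-g(\omega_*,\cdot)\rVert_{C^2}$, uniformly for $t\in J$. This is a purely elementary estimate of the same flavour as Lemma~\ref{tecl}: writing $e^{itg(\omega,\cdot)}-e^{itg(\omega_*,\cdot)}=it\int_0^1 e^{it(sg(\omega,\cdot)+(1-s)g(\omega_*,\cdot))}(g(\omega,\cdot)-g(\omega_*,\cdot))\,ds$ and differentiating up to second order, every term carries a factor that is a polynomial (of bounded degree, with coefficients depending on $|t|\le\max_{t\in J}|t|$) in the $C^2$ norms of $g(\omega,\cdot)$ and $g(\omega_*,\cdot)$ — which are uniformly bounded by \eqref{obs_cont} — times a factor $\lVert g(\omega,\cdot)-g(\omega_*,\cdot)\rVert_{C^2}$. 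Hence there is $C=C(J)>0$ with
\[
\lVert e^{itg(\omega,\cdot)}-e^{itg(\omega_*,\cdot)}\rVert_{C^2}\le C\,\lVert g(\omega,\cdot)-g(\omega_*,\cdot)\rVert_{C^2}
\quad\text{for all }t\in J.
\]
Combining the three displays: for all $t\in J$, all $h\in\mathcal{B}^{1,1}$, and all $\omega$ in the (fixed) neighbourhood of $\omega_*$ on which $T_\omega$ is constant,
\[
\lVert \mathcal{L}_\omega^{it}h-\mathcal{L}_{\omega_*}^{it}h\rVert_{1,1}
\le C(J)\,\lVert g(\omega,\cdot)-g(\omega_*,\cdot)\rVert_{C^2}\,\lVert h\rVert_{1,1},
\]
so $\sup_{t\in J}\lVert \mathcal{L}_\omega^{it}-\mathcal{L}_{\omega_*}^{it}\rVert_{1,1}\le C(J)\lVert g(\omega,\cdot)-g(\omega_*,\cdot)\rVert_{C^2}\to 0$ as $\omega\to\omega_*$ by the hypothesis \eqref{x}. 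Applying this for each $i=0,\dots,m_0-1$ (equivalently, at every point of the orbit of $\omega_0$) gives the claimed equicontinuity.

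The only genuinely nontrivial point is the multiplier estimate $\lVert \varphi\cdot h\rVert_{1,1}\le C\lVert\varphi\rVert_{C^2}\lVert h\rVert_{1,1}$ uniformly over the relevant $\varphi$; but since exactly this bound is already used (and attributed to \cite[Lemma~3.2]{GL}, resp. established from \eqref{auxnorms}) inside the proofs of Proposition~\ref{twistcty} and Proposition~\ref{twistQC}, it can be quoted rather than reproved. Everything else is bookkeeping: the local constancy of $\mathcal{L}_\omega$ from HK~A1 removes any need to estimate $\mathcal{L}_\omega-\mathcal{L}_{\omega_*}$ (that difference is simply $0$ near each orbit point), so the whole argument collapses to transferring the $C^2$-closeness of observables \eqref{x} through a bounded multiplier and the uniformly bounded operator $\mathcal{L}_{\omega_*}$. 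I would therefore expect the write-up to be short, with the "hard part" being only the careful statement of which neighbourhood of $\sigma^i\omega_0$ is used and ensuring the constant $C(J)$ is independent of $t\in J$ and of $\omega$ in that neighbourhood.
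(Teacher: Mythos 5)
Your proposal is correct and follows essentially the same route as the paper: exploit the local constancy of $\mathcal L_\omega$ near the orbit of $\omega_0$ from HK~A1 to write $\mathcal L_\omega^{it}-\mathcal L_{\omega_*}^{it}=\mathcal L_{\omega_*}\bigl((e^{itg(\omega,\cdot)}-e^{itg(\omega_*,\cdot)})\,\cdot\,\bigr)$, reduce to the multiplier bound $\lVert\varphi h\rVert_{1,1}\le C\lVert\varphi\rVert_{C^2}\lVert h\rVert_{1,1}$ as in Propositions~\ref{twistcty} and~\ref{twistQC}, and then show $\lVert e^{itg(\omega,\cdot)}-e^{itg(\omega_*,\cdot)}\rVert_{C^2}\le C(J)\lVert g(\omega,\cdot)-g(\omega_*,\cdot)\rVert_{C^2}$ uniformly over $t\in J$. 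The only (cosmetic) difference is that you derive the last $C^2$ estimate from an integral representation of the difference of exponentials, whereas the paper carries out an explicit term-by-term mean-value/add-and-subtract computation on the derivatives up to order two.
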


\begin{rmk}
Observe that it is not enough to simply prescribe that~\eqref{x} holds since we also need to make sure that this requirement  is not spoiled when we center our observable (see~\eqref{zeromean_cont}). It turns out that under suitable conditions we can make sure that~\eqref{x} is preserved  under centering.
Let us first recall   that~\eqref{DEC_cont} and~\eqref{wsly_cont} hold   for every $\omega \in \Omega$.
We assume that $\Omega$ is a metric space,  $\sigma \colon \Omega \to \Omega$ is a homeomorphism and that
\[
\text{ $\omega \mapsto T_\omega$ is locally constant at points that belong to the orbit of $\omega_0$.}
\]
Let us now modify slightly Proposition~\ref{prop:uniqueAcim} to ensure that under these additional assumptions we can say more about the top Oseledets space of our cocycle.

Set
\[
\mathcal Y=\{v\colon \Omega \to \mathcal B^{1,1}: \text{$v$ measurable and} \  \lVert v\rVert_\infty:=\sup_{\omega \in \Omega}\lVert v(\omega)\rVert_{1,1}<\infty \}.
\]
Then, $(\mathcal Y, \lVert \cdot \rVert_\infty)$ is a Banach space. Let $Y$ be a set of all $v\in \mathcal Y$ that  are continuous at  points $\sigma^i \omega_0$, $i=0,1, \ldots, m_0-1$.
We claim that $Y$ is a closed subset of $\mathcal Y$. Indeed, take a sequence $(v_n)_n \subset Y$ such that $v_n \to v$ in $\mathcal Y$ and fix  $i\in \{0, 1, \ldots, m_0-1\}$. Then, we have that
\[
\begin{split}
\lVert v(\omega)-v(\sigma^i \omega_0)\rVert_{1,1} &\le \lVert v(\omega)-v_n(\omega)\rVert_{1,1}+\lVert v_n(\omega)-v_n(\sigma^i \omega_0)\rVert_{1,1} \\
&\phantom{\le}+\lVert v_n(\sigma^i \omega_0)-v(\sigma^i \omega_0)\rVert_{1,1}\\
&\le 2\lVert v-v_n \rVert_\infty +\lVert v_n(\omega)-v_n(\sigma^i \omega_0)\rVert_{1,1}.
\end{split}
\]
Take $\varepsilon >0$ and choose $n$ such that
\[
\lVert v-v_n \rVert_\infty< \frac{\varepsilon}{3}.
\]
Since $v_n \in Y$, we have that
\[
\lVert v_n(\omega)-v_n(\sigma^i \omega_0)\rVert_{1,1}<\frac{\varepsilon}{3},
\]
whenever $\omega$ is sufficiently close to $\sigma^i \omega_0$. Hence,
\[
\lVert v(\omega)-v(\sigma^i \omega_0)\rVert_{1,1}<\varepsilon,
\]
whenever $\omega$ is sufficiently close to $\sigma^i \omega_0$.
 Therefore, $v\in Y$ and $Y$ is closed.

Set
\[
Z:=\{v\in Y; v(\omega) \ge 0 \ \text{and} \  v(\omega)(1)=1  \ \text{for $\omega \in \Omega$} \}.
\]
Then, $Z$ is a closed subset of $Y$ (see the argument in the proof of~Proposition~\ref{prop:uniqueAcim}) and hence it is a Banach space. We consider $\mathbb L \colon Z\to Z$ defined by
\[
(\mathbb L v)(\omega)=\mathcal L_{\sigma^{-1}\omega}v(\sigma^{-1}\omega), \quad \omega \in \Omega, \ v\in Z.
\]
In order to show that $\mathbb L$ is well-defined, we only need to note  that
\[
\omega \mapsto \mathcal L_{\sigma^{-1}\omega}v(\sigma^{-1}\omega)
\]
is continuous at $\sigma^i \omega_0$, $i\in \{0, \ldots, m_0-1\}$.
 However, this follows from the fact that $v\in Z$ (and thus $v\in Y$) and our assumption that $\omega \mapsto T_\omega$ (and thus also $\omega \mapsto \mathcal L_\omega$) is locally constant along the orbit of $\omega_0$. It follows from~\eqref{DEC_cont} that $\mathbb L$ has the unique fixed point $h^0\in Z$.
This easily implies that
\[
\omega \mapsto h_\omega^0(g(\omega, \cdot)), \quad h_\omega^0:=h^0(\omega)
\]
is continuous at $\sigma^i \omega_0$, $i=0, \ldots, m_0-1$ and therefore~\eqref{x} will remain valid even after centering.

\end{rmk}
\begin{proof}[Proof of Lemma~\ref{EQUI}]
We will prove the desired equicontinuity property in $\omega_0$. The argument for all other points in the  orbit of $\omega_0$ is completely analogous. Observe that for all $\omega \in \Omega$ sufficiently close to $\omega_0$, we have that $\mathcal L_\omega=\mathcal L_{\omega_0}$. Therefore, for all $\omega$ close to $\omega_0$, we have that
\[
(\mathcal L_\omega^{it}-\mathcal L_{\omega_0}^{it})(h)=\mathcal L_{\omega_0}((e^{it g(\omega, \cdot)}-e^{it g(\omega_0, \cdot)} )h),
\]
and thus
\[
\lVert (\mathcal L_\omega^{it}-\mathcal L_{\omega_0}^{it})(h) \rVert_{1,1}\le \lVert \mathcal L_{\omega_0}\rVert_{1,1} \cdot  \lVert (e^{it g(\omega, \cdot)}-e^{it g(\omega_0, \cdot)} )h \rVert_{1,1},
\]
for each $h\in \mathcal B^{1,1}$. Observe that
\[
\begin{split}
&\lVert (e^{it g(\omega, \cdot)}-e^{it g(\omega_0, \cdot)} )h \rVert_{1,1} = \\
&=\max \{\lVert (e^{it g(\omega, \cdot)}-e^{it g(\omega_0, \cdot)} )h \rVert_{0,1}^\sim,  \lVert (e^{it g(\omega, \cdot)}-e^{it g(\omega_0, \cdot)} )h \rVert_{1,2}^\sim \}.
\end{split}
\]
As in the proofs of Propositions~\ref{twistcty} and \ref{twistQC}, we need to estimate \[ \lVert e^{it g(\omega, \cdot)}-e^{it g(\omega_0, \cdot)}\rVert_{C^2}.\]

Take $x\in X$. By applying the mean-value theorem for the map $z\mapsto e^{itz}$, we see that
\[
\lvert e^{itg(\omega, x)}-e^{itg(\omega_0, x)}\rvert \le \lvert t\rvert \cdot \lvert g(\omega, x)-g(\omega_0, x)\rvert.
\]
Thus,
\[
\lVert e^{it g(\omega, \cdot)}-e^{it g(\omega_0, \cdot)}\rVert_{C^0} \le \lvert t\rvert \cdot \lVert g(\omega, \cdot)-g(\omega_0,\cdot)\rVert_{C^0},
\]
which implies that
\begin{equation}\label{815}
\lVert e^{it g(\omega, \cdot)}-e^{it g(\omega_0, \cdot)}\rVert_{C^0} \le \max \{\lvert t\rvert: t\in J \}\lVert g(\omega, \cdot)-g(\omega_0,\cdot)\rVert_{C^2}.
\end{equation}
Moreover, we have that
\[
\begin{split}
\partial^j (e^{it g(\omega, \cdot)}-e^{it g(\omega_0, \cdot)}) &=it e^{it g(\omega, \cdot)} \partial^j (g(\omega, \cdot))-it e^{itg(\omega_0, \cdot)} \partial^j (g(\omega_0, \cdot)) \\
&=it e^{it g(\omega, \cdot)} \partial^j (g(\omega, \cdot))-it e^{it g(\omega_0, \cdot)} \partial^j (g(\omega, \cdot)) \\
&\phantom{=}+it e^{it g(\omega_0, \cdot)} \partial^j (g(\omega, \cdot))-it e^{itg(\omega_0, \cdot)} \partial^j (g(\omega_0, \cdot)),
\end{split}
\]
for each $j\in \{1, \ldots, d\}$. By~\eqref{815}, we have that
\[
\begin{split}
\lVert \partial^j (e^{it g(\omega, \cdot)}-e^{it g(\omega_0, \cdot)}) \rVert_{C^0} &\le  \max \{\lvert t\rvert^2: t\in J \}\lVert g(\omega, \cdot)-g(\omega_0,\cdot)\rVert_{C^2}\cdot \lVert g(\omega, \cdot)\rVert_{C^2} \\
&\phantom{\le}+\max \{\lvert t\rvert: t\in J \}\lVert g(\omega, \cdot)-g(\omega_0, \cdot)\rVert_{C^2},
\end{split}
\]
for every $j\in \{1, \ldots, d\}$. Thus,
\begin{equation}\label{tr}
\max_{1\le j\le d}\lVert \partial^j (e^{it g(\omega, \cdot)}-e^{it g(\omega_0, \cdot)}) \rVert_{C^0} \le C\lVert g(\omega, \cdot)-g(\omega_0, \cdot)\rVert_{C^2},
\end{equation}
for some $C>0$ which is independent on $t$ and $\omega$. Finally, for each $k, j\in \{1, \ldots, d\}$, we have that
\[
\begin{split}
\partial^k\partial^j (e^{it g(\omega, \cdot)}-e^{it g(\omega_0, \cdot)}) &=-t^2e^{itg(\omega, \cdot)}  \partial^k (g(\omega, \cdot)) \partial^j (g(\omega, \cdot))\\
&\phantom{=}+it e^{it g(\omega, \cdot)}\partial^k\partial^j (g(\omega, \cdot)) \\
&\phantom{=}+t^2e^{itg(\omega_0, \cdot)}  \partial^k (g(\omega_0, \cdot)) \partial^j (g(\omega_0, \cdot))\\
&\phantom{=}-it e^{it g(\omega_0, \cdot)}\partial^k\partial^j (g(\omega_0, \cdot)).
\end{split}
\]
Observe that
\[
\begin{split}
it e^{it g(\omega, \cdot)}\partial^k\partial^j (g(\omega, \cdot))-it e^{it g(\omega_0, \cdot)}\partial^k\partial^j (g(\omega_0, \cdot))&=it e^{it g(\omega, \cdot)}\partial^k\partial^j (g(\omega, \cdot))\\
&\phantom{=}-it e^{it g(\omega, \cdot)}\partial^k\partial^j (g(\omega_0, \cdot))\\
&\phantom{=}+it e^{it g(\omega, \cdot)}\partial^k\partial^j (g(\omega_0, \cdot))\\
&\phantom{=}-it e^{it g(\omega_0, \cdot)}\partial^k\partial^j (g(\omega_0, \cdot)).
\end{split}
\]
Thus (using~\eqref{815}),
\[
\begin{split}
& \lVert it e^{it g(\omega, \cdot)}\partial^k\partial^j (g(\omega, \cdot))-it e^{it g(\omega_0, \cdot)}\partial^k\partial^j (g(\omega_0, \cdot)) \rVert_{C^0} \\
&\le \max \{\lvert t\rvert: t\in J\}\lVert g(\omega, \cdot)-g(\omega_0, \cdot)\rVert_{C^2} \\
&\phantom{\le}+\max \{\lvert t\rvert^2: t\in J\}\lVert g(\omega, \cdot)-g(\omega_0, \cdot)\rVert_{C^2} \lVert g(\omega_0, \cdot)\rVert_{C^2}.
\end{split}
\]
On the other hand,
\[
\begin{split}
&-t^2e^{itg(\omega, \cdot)}  \partial^k (g(\omega, \cdot)) \partial^j (g(\omega, \cdot)) +t^2e^{itg(\omega_0, \cdot)}  \partial^k (g(\omega_0, \cdot)) \partial^j (g(\omega_0, \cdot)) \\
&=-t^2e^{itg(\omega, \cdot)}  \partial^k (g(\omega, \cdot)) \partial^j (g(\omega, \cdot))+t^2e^{itg(\omega_0, \cdot)}  \partial^k (g(\omega, \cdot)) \partial^j (g(\omega, \cdot)) \\
&\phantom{=}-t^2e^{itg(\omega_0, \cdot)}  \partial^k (g(\omega, \cdot)) \partial^j (g(\omega, \cdot))+t^2e^{itg(\omega_0, \cdot)}  \partial^k (g(\omega_0, \cdot)) \partial^j (g(\omega, \cdot)) \\
&\phantom{=}-t^2e^{itg(\omega_0, \cdot)}  \partial^k (g(\omega_0, \cdot)) \partial^j (g(\omega, \cdot))+t^2e^{itg(\omega_0, \cdot)}  \partial^k (g(\omega_0, \cdot)) \partial^j (g(\omega_0, \cdot))
\end{split}
\]
Hence, \eqref{815} implies that
\[
\begin{split}
&\lVert -t^2e^{itg(\omega, \cdot)}  \partial^k (g(\omega, \cdot)) \partial^j (g(\omega, \cdot)) +t^2e^{itg(\omega_0, \cdot)}  \partial^k (g(\omega_0, \cdot)) \partial^j (g(\omega_0, \cdot)) \rVert_{C^0} \\
&\le \max \{\lvert t\rvert^3: t\in J \}\lVert g(\omega, \cdot)-g(\omega_0,\cdot)\rVert_{C^2} \cdot \lVert g(\omega, \cdot)\rVert_{C^2}^2\\
&\phantom{\le}+t^2\lVert g(\omega, \cdot)-g(\omega_0,\cdot)\rVert_{C^2} \cdot \lVert g(\omega, \cdot)\rVert_{C^2}\\
&\phantom{\le}+t^2\lVert g(\omega, \cdot)-g(\omega_0,\cdot)\rVert_{C^2} \cdot \lVert g(\omega_0, \cdot)\rVert_{C^2}.
\end{split}
\]
We conclude that (by increasing $C$)  we have that
\[
\begin{split}
&\sup_{1\le k, j\le d}  \lVert -t^2e^{itg(\omega, \cdot)}  \partial^k (g(\omega, \cdot)) \partial^j (g(\omega, \cdot)) +t^2e^{itg(\omega_0, \cdot)}  \partial^k (g(\omega_0, \cdot)) \partial^j (g(\omega_0, \cdot))\rVert_{C^0} \\
&\le C\lVert g(\omega, \cdot)-g(\omega_0,\cdot)\rVert_{C^2}.
\end{split}
\]
Thus,
\[
 \lVert e^{it g(\omega, \cdot)}-e^{it g(\omega_0, \cdot)}\rVert_{C^2} \le C\lVert g(\omega, \cdot)-g(\omega_0,\cdot)\rVert_{C^2},
\]
and
\[
\lVert \mathcal L_\omega^{it}-\mathcal L_{\omega_0}^{it}\rVert \le C\lVert g(\omega, \cdot)-g(\omega_0,\cdot)\rVert_{C^2},
\]
for $t\in J$ and $\omega$ in a neighborhood of $\omega_0$.
The conclusion of the lemma follows directly from~\eqref{x}.
\end{proof}

As we already announced, we now prove a Lasota--Yorke inequality for the twisted operator.
\begin{lemma}\label{lem:strongLY}
For each $t\in \R$, there exist $A_t, B_t> 0$, $0<\gamma_t<1$ such that for every $n\geq 0, h \in \mc B^{1,1}$ and \paeom,
\[
\| \mcl_\om^{it, (n)} h\|_{1,1} \leq A_t \ga_t^n \| h\|_{1,1} + B_{t} \|h\|_{0,2}.
\]
Moreover, for each $J\subset \mathbb R$ compact interval, we have that
\[
\sup_{t\in J}\max \{A_t, B_t \} <\infty.
\]
\end{lemma}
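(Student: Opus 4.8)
The plan is to reduce the twisted iterate to the untwisted one via the coding identity of Lemma~\ref{dualtwist}. Combining \eqref{coding} with \eqref{aux_cont} and \eqref{1145} one checks that, in $\mathcal{B}^{1,1}$, $\mcl_\om^{it,(n)}h=\mcl_\om^{(n)}\big(e^{itS_ng(\om,\cdot)}h\big)$ (verify on $C^r$ and extend by density), so \eqref{wsly_cont} gives $\lVert\mcl_\om^{it,(n)}h\rVert_{1,1}\le Ba^n\lVert e^{itS_ng(\om,\cdot)}h\rVert_{1,1}+B\lVert e^{itS_ng(\om,\cdot)}h\rVert_{0,2}$ and $\lVert\mcl_\om^{it,(n)}h\rVert_{0,2}\le B\lVert e^{itS_ng(\om,\cdot)}h\rVert_{0,2}$. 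Exactly as in the proof of Proposition~\ref{twistcty}, multiplication by a $C^2$ function $f$ obeys $\lVert fh\rVert_{1,1}\le C_*\,|f|_*\lVert h\rVert_{1,1}$ and $\lVert fh\rVert_{0,2}\le|f|_*\lVert h\rVert_{0,2}$, where $|f|_*:=\max_i\sup_{\chi\in\Xi_i}\lVert(f\circ\psi_i)\circ(\Id,\chi)\rVert_{C^2}$ measures $f$ along admissible graphs and $C_*$ is absolute. The guiding idea is that twisting by a block of length $N$ should cost a factor that does \emph{not} grow with $N$, so that it is beaten by the fixed factor $a^N$ from \eqref{wsly_cont}; and since $e^{itS_ng(\om,\cdot)}$ has modulus one, this reduces to controlling only its derivatives along admissible graphs.

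The crucial step is to prove that $M_t:=\sup_{n\ge0}\esssup_{\om\in\Om}|e^{itS_ng(\om,\cdot)}|_*<\infty$, with $M_t$ at most polynomial in $|t|$ (so that $\sup_{t\in J}M_t<\infty$ for every compact $J$) and $M_t\ge1$. For this I would use that the graphs in $\Xi_i$ are tangent to the stable cone of $T$ and that, after possibly shrinking $\ep$, $DT_\om$ maps the stable cone field into itself and contracts it by some $\nu'<1$; hence $DT_\om^{(i)}$ contracts vectors tangent to an admissible graph by $(\nu')^i$, and a bounded--distortion argument (using that $T_\om\in C^{r+1}$, $r>2$, uniformly in $\om$) bounds the $C^2$-norms of the restrictions $T_\om^{(i)}|_W$ to an admissible leaf $W$ uniformly in $i$ and $\om$. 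Since $S_ng(\om,\cdot)|_W=\sum_{i=0}^{n-1}g(\sigma^i\om,\cdot)\circ T_\om^{(i)}|_W$, the bound \eqref{obs_cont} then makes the first and second derivatives of $S_ng(\om,\cdot)$ along admissible graphs bounded uniformly in $n$ and $\om$; the chain rule controls the derivatives of $e^{itS_ng(\om,\cdot)}$ by those of $S_ng(\om,\cdot)$ and powers of $|t|$, which yields the claim (the zeroth-order part of $S_ng(\om,\cdot)$, which is unbounded in $n$, plays no role because $|e^{itS_ng(\om,\cdot)}|\equiv1$).

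Granting this, the first two paragraphs give $\lVert\mcl_\om^{it,(N)}h\rVert_{1,1}\le Ba^NC_*M_t\lVert h\rVert_{1,1}+BM_t\lVert h\rVert_{0,2}$ for every $N$, together with the uniform weak bound $\lVert\mcl_\om^{it,(n)}h\rVert_{0,2}\le BM_t\lVert h\rVert_{0,2}$ for every $n$. Given a compact $J\subset\R$, I would choose $N=N(J)$ with $\rho_J:=Ba^NC_*\sup_{t\in J}M_t<1$ and set $\rho_t:=Ba^NC_*M_t\in(0,\rho_J]$ for $t\in J$, so the time-$N$ inequality becomes a genuine Lasota--Yorke estimate. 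Iterating it over blocks of length $N$, with the uniform weak bound absorbing the accumulated weak terms, gives $\lVert\mcl_\om^{it,(mN)}h\rVert_{1,1}\le\rho_t^m\lVert h\rVert_{1,1}+\frac{(BM_t)^2}{1-\rho_t}\lVert h\rVert_{0,2}$; an arbitrary $n$ is then completed by at most $N-1$ further steps, each iterate of the form $\mcl_\eta^{it,(j)}$ with $0\le j\le N-1$ and $\lVert\mcl_\eta^{it,(j)}\rVert_{1,1}\le K(it)^j\le\max\{1,K(it)^{N-1}\}$ by iterating \eqref{ubt_cont}. Writing $\gamma_t:=\rho_t^{1/N}\in(0,1)$ then yields the asserted inequality with $A_t:=\max\{1,K(it)^{N-1}\}\rho_t^{-1}$ and $B_t:=\max\{1,K(it)^{N-1}\}\frac{(BM_t)^2}{1-\rho_t}$; and since $t\mapsto K(it)$ is continuous and $\rho_t\ge Ba^NC_*>0$ on $J$, taking suprema over $J$ gives $\sup_{t\in J}\max\{A_t,B_t\}<\infty$ (in fact also $\sup_{t\in J}\gamma_t<1$).

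The hard part is the uniform $C^2$-control of $S_ng(\om,\cdot)$ along stable leaves in the second paragraph: the first-derivative bound is immediate from the exponential contraction of the stable cone, but the second-derivative bound requires a genuine bounded--distortion estimate for the compositions $T_\om^{(i)}$ restricted to admissible leaves, and it is here that the regularity $C^{r+1}$ ($r>2$) and the smallness of the perturbation are really used. Everything else is bookkeeping that runs parallel to the quasi-compactness argument of Proposition~\ref{twistQC}.
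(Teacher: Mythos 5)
Your reduction $\mcl_\om^{it,(n)}h=\mcl_\om^{(n)}\bigl(e^{itS_ng(\om,\cdot)}h\bigr)$ is correct, and your observation that $|e^{itS_ng}|\equiv 1$ while the derivatives of $S_ng(\om,\cdot)$ \emph{along} stable leaves are uniformly bounded in $n$ by forward contraction is precisely the paper's key estimate \eqref{sad}. But the multiplication inequality you then invoke is not the one that holds. Inspect the proof of Proposition~\ref{twistcty}: the bound for $\lVert fh\rVert^{\sim}_{1,2}$ necessarily contains the term $\max_{j,i}\sup_{\chi}\lVert[\partial^j(f\circ\psi_i)]\circ(\Id,\chi)\rVert_{C^1}$, i.e.\ the \emph{full ambient} first derivatives of $f$ evaluated on the admissible graph, for every coordinate direction $j$ including the unstable ones. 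This is forced by the Leibniz rule: $\partial^\alpha((fh)\circ\psi_i)$ with $|\alpha|=1$ produces the term $\partial^\alpha(f\circ\psi_i)\cdot(h\circ\psi_i)$, and $\partial^\alpha$ is a derivative in $\R^d$, not a derivative of the restriction of $f$ to the leaf. So $\lVert fh\rVert_{1,1}\le C_*|f|_*\lVert h\rVert_{1,1}$ with your $|f|_*$ (which sees only $f$ restricted to the graphs) is not available; the correct control is by the ambient $C^2$ norm of $f$, as in [GL, Lemma~3.2]. For $f=e^{itS_ng(\om,\cdot)}$ the derivative of $S_ng$ in an unstable direction is $\sum_{i=0}^{n-1}Dg\cdot DT_\om^{(i)}$ applied to an unstable vector, which grows like $\lambda^n$; hence the operator norm of multiplication by $e^{itS_ng}$ on $\mathcal B^{1,1}$ grows exponentially in $n$, the quantity you actually need in place of $M_t$ is unbounded, and the term $Ba^NC_*M_t\lVert h\rVert_{1,1}$ does not exist. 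The fixed factor $a^N$ cannot beat it unless one assumes something like $a\lambda<1$, which there is no reason to have.

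This is exactly why the paper does not factor through the multiplication operator, but instead reruns the Lasota--Yorke proof of [GL, Lemma~6.3] with the twist inserted: after the change of variables, $\int_W\mcl_\om^{it,(n)}h\cdot\varphi$ becomes a sum of integrals over admissible leaves $W_j\subset(T_\om^{(n)})^{-1}(W)$, and the factor $e^{itS_ng}$ enters only through $|e^{itS_ng(\om,\cdot)}|_{C^q(W_j)}$, i.e.\ through derivatives along stable leaves, where your contraction and distortion argument applies. (Your weak-norm estimate $\lVert\mcl_\om^{it,(n)}h\rVert_{0,2}\le BM_t\lVert h\rVert_{0,2}$ does survive, since $\lVert\cdot\rVert^{\sim}_{0,2}$ tests the multiplier only through its restriction to leaves; it is the $p=1$ component of the strong norm that breaks the factorization.) The bookkeeping in your final paragraph --- blocking in lengths $N$, iterating, completing the remainder with \eqref{ubt_cont} --- is fine, but it is moot until the $N$-step strong-norm estimate is obtained by the leafwise route rather than by the multiplication-operator shortcut.
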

\begin{proof}
(Sketch). The proof follows verbatim the proof of Lemma 6.3 in \cite{GL}, with two  differences. First, we work with composition of maps, but if they are close enough we can easily adapt the {\em deterministic} arguments (we recall that this was explicitly emphasized in~\cite[Section 7]{GL}, in particular allowing for a \textit{random version} of \cite[Lemma 3.3]{GL} to be applied).  Second, since we use the twisted operator instead of the usual one, in the various estimates in the proof of Lemma 6.3 \cite{GL} we find the extra multiplicative factor $e^{it S_n g(\omega, \cdot)}$.
The proof of Lemma 6.3 \cite{GL} is done by induction on the index $p$ and the first step is to get a weak version of the Lasota--Yorke, namely for each $t\in \R$, there exists $C_t\geq 0$ such that for every $n\geq 0$ and \paeom,
\begin{equation}\label{eq:weakLYtwist}
\| \mcl_\om^{it, (n)} \|_{0,q} \leq C_t.
\end{equation}
We now prove \eqref{eq:weakLYtwist} (with $q=1$) to show how to handle the additional multiplicative factor. We use the notation as in~\cite[p.202]{GL}.
Recall that
\[
\mathcal L_\omega^{(n)} (h)(x)=\frac{h((T_\omega^{(n)})^{-1}(x))}{\lvert \det DT_\omega^{(n)} ((T_\omega^{(n)})^{-1}(x))\rvert},
\]
and therefore
\[
\mathcal L_\omega^{it, (n)}(h)(x)=\frac{e^{it S_n g(\omega, (T_\omega^{(n)})^{-1}(x))}h((T_\omega^{(n)})^{-1}(x))}{\lvert \det DT_\omega^{(n)} ((T_\omega^{(n)})^{-1}(x))\rvert},
\]
for each $h\in C^r$ and $x\in X$. Thus,
\[
\int_W \mathcal L_\omega^{it, (n)}h \cdot \varphi=\int_{(T_\omega^{(n)})^{-1}(W)}\bar{h}_n\cdot \varphi \circ T_\omega^{(n)} \cdot J_W T_\omega^{(n)},
\]
where $J_W T_\omega^{(n)}$ is the Jacobian of $T_\omega^{(n)} \colon (T_\omega^{(n)})^{-1}(W) \to W$ and
\[
\bar{h}_n:=\frac{he^{itS_ng(\omega, \cdot)}}{\lvert \det DT_\omega^{(n)} \rvert}.
\]
Let $\varphi_j = \varphi \circ T_\omega^{(n)} \cdot \rho_j$, where
$\rho_1,\ldots,\rho_\ell$ is a partition of unity on $(T_\omega^{(n)})^{-1}W$, as provided by (the random analogue of) Lemma~3.3~\cite{GL} (using $\gamma=1$), and $W_1,\ldots,W_\ell$ the corresponding admissible leaves such that $(T_{\omega}^{(n)})^{-1}(W)\subset \cup_{j=1}^\ell W_j$.
Hence, \cite[(6.2)]{GL}  becomes
\[
\bigg{\lvert} \int_{W_j}\bar{h}_n\cdot \varphi_j \cdot J_WT_\omega^{(n)} \bigg{\rvert }\le C\lVert h\rVert_{0,1} \bigg{\lvert}\lvert \det DT_\omega^{(n)} \rvert^{-1} \cdot e^{it S_ng(\omega, \cdot)}\cdot \varphi_j \cdot J_WT_\omega^{(n)}  \bigg{\rvert}_{C^1(W_j)}.
\]
Note that
\[
\bigg{\lvert}\lvert \det DT_\omega^{(n)} \rvert^{-1} \cdot e^{it S_ng(\omega, \cdot)}\cdot \varphi_j \cdot J_WT_\omega^{(n)}  \bigg{\rvert}_{C^1(W_j)} \le \bigg{\lvert}\lvert \det DT_\omega^{(n)} \rvert^{-1} \cdot  J_WT_\omega^{(n)}  \bigg{\rvert}_{C^1(W_j)} \cdot  \lvert \varphi_j\rvert_{C^1(W_j)}\cdot
\lvert e^{itS_ng(\omega, \cdot)}\rvert_{C^1(W_j)}.
\]
It follows from~\cite[Lemma 6.2]{GL} that
\[
 \sum_{j\le \ell} \lvert \lvert \det DT_\om^{(n)}\rvert^{-1}\cdot J_WT_\om^{(n)}
 \rvert_{C^1(W_j)}\le C.
\]
In addition, from the argument at the bottom of~\cite[p. 203]{GL}, it follows that
\[
 \lvert \varphi_j\rvert_{C^1(W_j)}\le \lvert \varphi \circ T_\om^{(n)} \rvert_{C^1(W_j)} \cdot \lvert \rho_j\rvert_{C^1(W_j)}\le C.
\]
Hence, in order to complete the proof of the weak Lasota--Yorke inequality, it is sufficient to show that
\begin{equation}\label{sad}
\lvert e^{itS_n g(\om, \cdot)}\rvert_{C^1(W_j)}\le C.
\end{equation}
Note that
\begin{equation*}
\begin{split}
 &\lvert e^{itS_n g(\om, \cdot)}\rvert_{C^0(W_j)}=1 \quad \text{and} \\
  &\lvert \partial^\al(e^{itS_n g(\om, \cdot)})\rvert_{C^0(W_j)}=\lvert t\rvert \cdot \lvert \partial^\al(S_n g(\om, \cdot))\rvert_{C^0(W_j)}
\le \lvert t\rvert \sum_{i=0}^{n-1}\lvert \partial^\al(g(\sig^i\om, T_\om^{(i)}(\cdot)))\rvert_{C^0(W_j)}.
 \end{split}
 \end{equation*}
In order to  bound $\lvert \partial^\al(g(\sig^i\om, T_\om^{(i)}(\cdot)))\rvert_{C^0(W_j)}$, we proceed as in~\cite[(4.3)]{DL}. For each $i$ and $x, y\in W_j$, we have
\begin{equation*}
\begin{split}
 \frac{\lvert g(\sig^i\om, T_\om^{(i)}x)-g(\sig^i\om, T_\om^{(i)} y)\rvert}{d(x,y)}&=\frac{\lvert g(\sig^i\om, T_\om^{(i)}x)-g(\sig^i\om, T_\om^{(i)}y)\rvert}{d(T_\om^{(i)} x,T_\om^{(i)} y)}\cdot \frac{d(T_\om^{(i)} x,T_\om^{(i)} y)}{d(x,y)}\\
 &\le
 C\nu^i \esssup_{\om \in \Om} \lVert g(\om, \cdot)\rVert_{C^1},
\end{split}
\end{equation*}
and thus
\[
\lvert \partial^\al(g(\sig^i\om, T_\om^{(i)}(\cdot)))\rvert_{C^0(W_j)} \le C\nu^i\esssup_{\om \in \Om} \lVert g(\om, \cdot)\rVert_{C^1}.
\]
In view of~\eqref{obs_cont}, \eqref{sad} holds.
Now one can repeat arguments in~\cite{GL} to obtain the weak Lasota--Yorke inequality for the twisted cocycle, \eqref{eq:weakLYtwist}. The proof of the strong Lasota--Yorke inequality can be obtained in a similar manner.
\end{proof}
Using Lemmas~\ref{EQUI} and~\ref{lem:strongLY} we get:
\begin{thm}\label{HKLCLT}
Suppose that conditions HK A1 and HK A3 hold and the observable $g$ satisfies~\eqref{x}. Then the same conclusions as those of Theorem C hold.
\end{thm}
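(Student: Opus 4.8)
The plan is to reduce everything to the machinery already in place: Theorem~\ref{thm:lclt} delivers the non-arithmetic LCLT as soon as condition (L) holds, and Lemma~2.10.4 of~\cite{HafoutaKifer} guarantees (L) whenever HK~A1, HK~A2 and HK~A3 hold. Since HK~A1 and HK~A3 are hypotheses of the theorem, the whole task is to verify HK~A2 in our Anosov setting out of the observable condition~\eqref{x} and the functional-analytic estimates built above. I would therefore organise the argument in three steps: (i) verify the equicontinuity half of HK~A2; (ii) verify the uniform bound~\eqref{PO}; (iii) assemble these with HK~A1 and HK~A3 and quote~\cite{HafoutaKifer} together with Theorem~\ref{thm:lclt}.

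For step (i) I would apply Lemma~\ref{EQUI} directly, since it states precisely the required equicontinuity of $\{\om\mapsto\mathcal L_\om^{it}:t\in J\}$ at the orbit points of $\om_0$, under hypothesis~\eqref{x}. The point needing care here is that \eqref{x} must be known for the \emph{centered} observable. As explained in the Remark following Lemma~\ref{EQUI}, under HK~A1 the map $\om\mapsto T_\om$ (hence $\om\mapsto\mathcal L_\om$) is locally constant along the orbit of $\om_0$; re-running the contraction argument of Proposition~\ref{prop:uniqueAcim} inside the closed subspace of sections $\Om\to\mathcal B^{1,1}$ that are continuous at each $\sigma^i\om_0$ then shows that $\om\mapsto h_\om^0(g(\om,\cdot))$ is continuous at each $\sigma^i\om_0$, so subtracting it preserves~\eqref{x}. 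I would present this observation first, so that Lemma~\ref{EQUI} may then be applied without caveat.

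For step (ii) I would deduce~\eqref{PO} from the twisted Lasota--Yorke inequality of Lemma~\ref{lem:strongLY} by exactly the bootstrap used to pass from~\eqref{wsly_cont} to~\eqref{ub_cont}: bounding $\lVert h\rVert_{0,2}\le\lVert h\rVert_{1,1}$ in $\lVert\mathcal L_\om^{it,(n)}h\rVert_{1,1}\le A_t\gamma_t^n\lVert h\rVert_{1,1}+B_t\lVert h\rVert_{0,2}$ yields $\lVert\mathcal L_\om^{it,(n)}h\rVert_{1,1}\le(A_t+B_t)\lVert h\rVert_{1,1}$ for every $n$ and every $h\in\mathcal B^{1,1}$, and Lemma~\ref{lem:strongLY} already furnishes $\sup_{t\in J}\max\{A_t,B_t\}<\infty$, so one may take $B(J):=\sup_{t\in J}(A_t+B_t)$. (The weak inequality~\eqref{eq:weakLYtwist} established inside the proof of Lemma~\ref{lem:strongLY} is not even needed for this step.)

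For step (iii) there is nothing further to do: HK~A1 and HK~A3 are assumed and HK~A2 has just been verified, so Lemma~2.10.4 of~\cite{HafoutaKifer} gives (L), whence Theorem~\ref{thm:lclt} applies verbatim and delivers the conclusion. I expect the only genuine obstacle to be the verification that centering preserves~\eqref{x}; this is exactly why the local constancy of $\om\mapsto T_\om$ along the periodic orbit is deliberately folded into HK~A1, and once that structural hypothesis is granted the remainder is routine, resting entirely on Lemmas~\ref{EQUI} and~\ref{lem:strongLY}.
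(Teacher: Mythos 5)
Your proposal is correct and follows the paper's argument exactly: the paper likewise verifies HK~A2 by combining Lemma~\ref{EQUI} (with the preceding Remark handling the preservation of~\eqref{x} under centering) for the equicontinuity half and Lemma~\ref{lem:strongLY} together with $\lVert h\rVert_{0,2}\le\lVert h\rVert_{1,1}$ for the bound~\eqref{PO}, then invokes Lemma~2.10.4 of~\cite{HafoutaKifer} and Theorem~\ref{thm:lclt}. No substantive differences.
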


\section{Piecewise hyperbolic dynamics}
\label{sect:pwh}

In this section, we apply the previous theory to obtain statistical laws for the random compositions $T^{(n)}_\omega=T_{\sigma^{n-1}\omega}\circ\cdots\circ T_{\sigma\omega}\circ T_\omega$ of piecewise uniformly hyperbolic maps $T_\omega$ of the type studied in \cite{DL}.
The class of maps $T_\omega$ considered contains piecewise toral automorphisms and piecewise hyperbolic maps with bounded derivatives;  see Remark 2.2 \cite{DL}.
\subsection{Preliminaries}
We follow the construction of \cite{DL}.
Let $X$ be a two-dimensional compact Riemannian manifold, possibly with boundary and not necessarily connected and let $T\colon X \to X$ be a piecewise hyperbolic map in the sense
of~\cite{DL}.
That is, the domain $X$ is broken into a finite number of pairwise disjoint open regions $\{X^+_i\}$ with piecewise $C^1$ boundary curves of finite length, such that $\bigcup_i \overline{X^+_i}=X$.
The image of each $X^+_i$ under $T$ is denoted $X^-_i=T(X^+_i)$; we assume that $\bigcup \overline{X^-_i}=X$.
The sets $\mathcal{S}^\pm:=X\setminus\bigcup_i X^\pm_i$ are the ``singularity sets'' for $T$ and $T^{-1}$, respectively.
Assume that $T$ is a $C^2$ diffeomorphism from the complement of $\mathcal{S}^+$ to the complement of $\mathcal{S}^-$, and that for each $i$, there is a $C^2$ extension of $T$ to $\overline{X^+_i}$.
On each $X_i$, the map $T$ is uniformly hyperbolic:  there are two continuous, strictly $DT$-invariant families of cones $C^s$ and $C^u$ defined on $X\setminus (\mathcal{S}^+\cup \partial X)$ satisfying
\begin{eqnarray*}
\lambda&:=&\inf_{x\in X\setminus \mathcal{S}^+}\inf_{v\in C^u} \frac{\|DTv\|}{\|v\|}>1,\\
\mu&:=&\inf_{x\in X\setminus \mathcal{S}^+}\inf_{v\in C^s} \frac{\|DTv\|}{\|v\|}<1,\\
\mu_+^{-1}&:=&\inf_{x\in X\setminus \mathcal{S}^-}\inf_{v\in C^s} \frac{\|DT^{-1}v\|}{\|v\|}>1.
\end{eqnarray*}
Assume that vectors tangent to the singularity curves in $\mathcal{S^-}$ are bounded away from $C^s$.
The singularity curves and their images and preimages should not intersect at too many points.
Denote by $\mathcal{S}_n^-$ (resp.\ $\mathcal{S}^+_n$) the set of singularity curves for $T^{-n}$ (resp.\ $T^n$), and let $M(n)$ denote the maximum number of singularity curves that meet at a single point.
Assume that there is an $\alpha_0$ and an integer $n_0>0$ such that $\lambda\mu^{\alpha_0}>1$ and $(\lambda\mu^{\alpha_0})^{n_0}>M(n_0)$;  this condition is satisfied if $M(n)$ has polynomial growth, for example.

For each $n\in\mathbb{N}$, let $\mathcal{K}_n$ be the set of connected components of $X\setminus \mathcal{S}_n^+$, and let $C^1(\overline{K},\mathbb{R})$ be the set of functions $\varphi\in C^1(\mathring{K},\mathbb{R})$ with $C^1$ extension in a neighbourhood of $\overline{K}$.
Let $(C^1_{\mathcal{S}_n^+})':=\{\varphi\in L^\infty(X): \varphi\in C^1(\overline{K},\mathbb{R})\ \forall K\in\mathcal{K}_n\}.$
If $h\in (C^1_{\mathcal{S}_n^+})'$ is an element of the dual of $C^1_{\mathcal{S}_n^+}$, then $\mathcal{L}:(C^1_{\mathcal{S}_n^+})'\to (C^1_{\mathcal{S}_{n-1}^+})'$ acts on $h$ by
$$\mathcal{L}h(\varphi)=h(\varphi\circ T)\quad\forall \varphi\in C^1_{\mathcal{S}_{n-1}^+}.$$

In order to obtain useful spectral information from $\mathcal{L}$, its action is restricted to a Banach space $\mathcal{B}$, analogous to the space $\mathcal{B}^{p,q}=\mathcal{B}^{1,1}$ in Section \ref{sec:prelim}.
We now briefly outline the construction of the norms on $\mathcal{B}$ and an associated ``weak'' space $\mathcal{B}_w$;  see \cite{DL} for details.
The norms are defined using ``admissible leaves'' $W$ in a set of admissible leaves $\Sigma$.
These leaves are smooth curves in approximately the stable direction, and are analogues of the $\psi_i\circ ({\rm Id},\chi)$ defined in Section \ref{sec:prelim}. Since we are going to recall several times estimates in \cite{DL}, we intend to comply with the notation there. In particular the functions $\chi$ defined on the charts will now become $F$ and the image of the graph of $F$, namely the {\em admissible leaves}, will be denoted with $G_F.$
For $\alpha,\beta,q<1$ such that $0<\beta\le\alpha\le 1-q\le\alpha_0$ let $C^\alpha(W, \mathbb C)$ denote the set of continuous complex-valued functions on $W$ with H\"older exponent $\alpha$ and define the norm
\begin{equation}\label{TF}
 \lvert \varphi \rvert_{W, \alpha, q}:=\lvert W\rvert^\alpha \cdot \lvert \varphi \rvert_{C^q(W, \mathbb C)},
\end{equation}
where $|W|$ denotes unnormalised induced Riemannian volume of $W$.
For $h\in C^1(X, \mathbb C)$ we define the weak norm of $h$ by
\[
 \lvert h\rvert_w=\sup_{W\in \Sigma}\sup_{\substack{\varphi \in C^1(W, \mathbb C) \\ \lvert \varphi \rvert_{C^1(W, \mathbb C)}\le 1}}\bigg{\lvert} \int_W h\varphi \, dm \bigg{\rvert}
\]
and the strong norm by \[\lVert h\rVert=\lVert h\rVert_s+b\lVert h\rVert_u,\] where
the strong stable norm is
\begin{equation}
\label{ssnorm}
 \lVert h\rVert_s=\sup_{W\in \Sigma}\sup_{\substack{\varphi \in C^1(W, \mathbb C) \\ \lvert \varphi \rvert_{W, \alpha, q}\le 1}}\bigg{\lvert} \int_W h\varphi \, dm \bigg{\rvert}
\end{equation}
and the strong unstable norm is
\begin{equation}
\label{sunorm}
 \lVert h\rVert_u=\sup_{\epsilon \le \epsilon_0}\sup_{\substack{W_1, W_2\in \Sigma \\ d_{\Sigma} (W_1, W_2)\le \epsilon}}\sup_{\substack {\lvert \varphi_i\rvert_{C^1(W_i, \mathbb C)}
 \le 1 \\d_q(\varphi_1, \varphi_2)\le \epsilon}}\frac{1}{\epsilon^\beta}\bigg{\lvert}
 \int_{W_1}h\varphi_1\, dm-\int_{W_2}h\varphi_2\, dm\bigg{\rvert},
\end{equation}
where $d_\Sigma$ and $d_q$ are defined precisely in \S3.1 \cite{DL}.
In comparison to the setting in Section \ref{sec:prelim}, the norm $|\cdot|_w$ plays the role of $\|\cdot\|_{p-1,q+1}=\|\cdot\|_{0,2}$, and the norm $\|\cdot\|$ plays the role of $\|\cdot\|_{p,q}=\|\cdot\|_{1,1}$.

Let $\mathcal B$ be the completion of $C^1(X, \mathbb C)$ with respect to the norm $\lVert \cdot \rVert$. Similarly, we define
$\mathcal B_w$ to the completion of $C^1(X, \mathbb C)$ with respect to the norm $\lvert \cdot \rvert_w$.

We recall that the elements of $\mathcal B$ are distributions. More precisely, there exists $C>0$ such that   any $h\in \mathcal B$ induces a
linear functional
$\varphi \to h(\varphi)$ with the property that
\begin{equation}\label{dist}
 \lvert h(\varphi)\rvert \le C\lvert h\rvert_w \lvert \varphi\rvert_{C^1}, \quad \text{for $\varphi \in C^1(X, \mathbb C)$,}
\end{equation}
see~\cite[Remark 3.4]{DL} for details.
In particular, for $h\in C^1(X, \mathbb C)$ we have that (see~\cite[Remark 2.5]{DL})
\begin{equation}\label{d} h(\varphi)=\int_X h\varphi,  \quad \text{for $\varphi \in C^1(X, \mathbb C)$.} \end{equation}
We say that $h\in \mathcal B$ is nonnegative and write $h\ge 0$ if $h(\varphi)\ge 0$ for any $\varphi \in C^1(X,\mathbb{R})$ such that $\varphi \ge 0$.
Finally, we recall (see~\cite[Section 2.1]{DL}) that for $h\in L^1(X,\mathbb{C})$,
\begin{equation}\label{to}
 \mcl h=\bigg{(}\frac{h} {\lvert \det DT\rvert}\bigg{)}\circ T^{-1}.
\end{equation}
\begin{proposition}\label{aux}
 We have that
 \[
  (\mcl h)(\varphi)=h(\varphi \circ T), \quad \text{for $h\in \mathcal B$ and $\varphi \in C^1(X, \mathbb C)$.}
 \]

\end{proposition}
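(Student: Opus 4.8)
The plan is to prove the identity first for $h\in C^1(X,\mathbb C)$ by an explicit change of variables, and then to extend it to arbitrary $h\in\mathcal B$ by density. Before anything else I would clarify the meaning of the right-hand side: for $\varphi\in C^1(X,\mathbb C)$ the composition $\varphi\circ T$ is in general only \emph{piecewise} $C^1$, being the restriction to each $\overline{X_i^+}$ of a genuinely $C^1$ map; this is precisely why \cite{DL} set up $\mathcal L$ on the dual spaces $(C^1_{\mathcal S_n^+})'$. Accordingly, I would first recall from \cite{DL} that the functional $h\mapsto h(\psi)$ extends from $\psi\in C^1(X,\mathbb C)$ to $\psi$ in the space $C^1_{\mathcal S^+}$ of piecewise-$C^1$ functions subordinate to $\mathcal S^+$, with a bound of the form $|h(\psi)|\le C|h|_w\,\|\psi\|_{C^1_{\mathcal S^+}}$ and agreeing with $\int_X h\psi\,dm$ when $h\in C^1(X,\mathbb C)$; this makes $h(\varphi\circ T)$ meaningful for every $h\in\mathcal B$.

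For $h\in C^1(X,\mathbb C)$, by \eqref{to} the element $\mathcal L h\in\mathcal B$ is represented by the bounded, piecewise-$C^1$ function $\big(h/|\det DT|\big)\circ T^{-1}$, so that, by the consistency just recalled (the distributional pairing coincides with integration for bounded piecewise-$C^1$ functions, cf.\ \cite{DL}), $(\mathcal L h)(\varphi)=\int_X\big((h/|\det DT|)\circ T^{-1}\big)\varphi\,dm$. Splitting this integral over the full-volume set $\bigcup_i X_i^-$ and, on each piece, substituting $x=T(y)$ with $y\in X_i^+$ — legitimate since $T$ restricts to a $C^2$ diffeomorphism $X_i^+\to X_i^-$ and $\mathcal S^\pm$ have zero Riemannian volume — the Jacobian $|\det DT(y)|$ cancels the weight and one obtains $\sum_i\int_{X_i^+}h(y)\,\varphi(T(y))\,dm(y)=\int_X h\cdot(\varphi\circ T)\,dm=h(\varphi\circ T)$.

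To pass to general $h\in\mathcal B$, choose $h_n\in C^1(X,\mathbb C)$ with $h_n\to h$ in $\mathcal B$. Since $\mathcal L$ is bounded on $\mathcal B$ (the Lasota--Yorke estimates of \cite{DL}), $\mathcal L h_n\to\mathcal L h$ in $\mathcal B$, hence by \eqref{dist} we get $(\mathcal L h_n)(\varphi)\to(\mathcal L h)(\varphi)$. On the other hand, using the continuous embedding $\mathcal B\hookrightarrow\mathcal B_w$, the extended bound above, and the elementary estimate $\|\varphi\circ T\|_{C^1_{\mathcal S^+}}\le C\|\varphi\|_{C^1}$, one has $|h_n(\varphi\circ T)-h(\varphi\circ T)|\le C|h_n-h|_w\,\|\varphi\circ T\|_{C^1_{\mathcal S^+}}\to0$. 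Combining with the $C^1$ case, $(\mathcal L h_n)(\varphi)=h_n(\varphi\circ T)$, and letting $n\to\infty$ yields the claim.

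The only genuinely delicate point — and the one I would spend most care on — is the preliminary extension statement: that every $h\in\mathcal B$ induces a continuous functional on piecewise-$C^1$ functions subordinate to $\mathcal S^+$, consistently with integration on $C^1$ functions. Because an admissible leaf $W\in\Sigma$ may cross several singularity curves, verifying this requires controlling the weak and strong-stable norms of restrictions of such test functions to leaves; this bookkeeping is exactly what \cite{DL} carry out in order to make $\mathcal L\colon\mathcal B\to\mathcal B$ well defined, so I would invoke it rather than redo it.
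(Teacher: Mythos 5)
Your proof is correct and follows the same route as the paper's (which simply says: change of variables via \eqref{d} and \eqref{to} for $h\in C^1$, then pass to general $h\in\mathcal B$ by density). The extra care you take in making sense of $h(\varphi\circ T)$ for general $h\in\mathcal B$ — since $\varphi\circ T$ is only piecewise $C^1$ — addresses a point the paper leaves implicit, and your appeal to the \cite{DL} framework for pairing elements of $\mathcal B$ against test functions subordinate to $\mathcal S^+$ is the right way to close it.
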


\begin{proof}
 For $h\in C^1(X, \mathbb C)$ the desired conclusion can be easily obtained from~\eqref{d} and~\eqref{to} by using a change of variables. This immediately implies that the conclusion holds
 for any $h\in \mathcal B$.
\end{proof}

\subsection{Building the cocycle}

This section follows the material in Section \ref{sec:InitialCocycle},
replacing $(\mathcal{B}^{1,1},\|\cdot\|_{1,1})$ with $(\mathcal{B},\|\cdot\|)$ and $(\mathcal{B}^{0,2},\|\cdot\|_{0,2})$ with $(\mathcal{B}_w,\|\cdot\|_w)$.
We have included this material to make the relevant references to \cite{DL} transparent.

\cite[Theorem 2.8]{DL} implies that the associated transfer operator
$\mcl_T$ is quasicompact on $\mathcal B$, $1$ is a simple
eigenvalue and there are no other eigenvalues of modulus $1$. This in particular implies (using the terminology as in~\cite[Definition 2.6]{CR}) that $\mcl_T$ is exact in
$\{h\in \mathcal B: h(1)=0\}$.

Let $\Gamma_{B_*}$ and $X_\epsilon$  be the sets of maps as defined in~\cite[Section 2.4]{DL}. By applying~\cite[Lemma 6.1]{DL}, we find that there exists $C>0$ such that
\[
 \sup_{\lVert h\rVert \le 1}\lvert (\mcl_{T'}-\mcl_T)h\rvert_w  \le C\epsilon^\beta \quad \text{for $T' \in X_\epsilon$.}
\]
It then follows from~\cite[Lemma 3.5]{DL} and the discussion on~\cite[Section 2.4]{DL} that there exist $\epsilon, A>0$ and $c\in (0, 1)$ such that
for any $T'\in \Gamma_\epsilon$, we have that
\begin{itemize}
 \item the unit ball in $\mathcal B$ is relatively compact in $\mathcal B_w$;
 \item  $\lvert \mcl_{T'}^n h\rvert_w\le A\lvert h\rvert_w$ for each $n\in \N$ and $h\in \mathcal B$;
 \item
 $
  \lVert \mcl_{T'}^n h\rVert\le Ac^n \lVert h\rVert +A\lvert h\rvert_w$ for each $n\in \N$ and $h\in \mathcal B$.

\end{itemize}
Consider now a family of operators
\[
\mathcal P=\{\mcl_{T'}: \ T'\in X_\epsilon \}.
\]
It then follows from~\cite[Proposition 2.10]{CR} (applied to the case where $\lVert \cdot \rVert=\lvert \cdot \rvert_w$ and $\lvert \cdot \rvert_v=\lVert \cdot \rVert$)
that there exists $0<\epsilon'\le \epsilon$, $D, \lambda >0$ such that for any $T_1, \ldots, T_n \in X_{\epsilon'}$, we have that
\begin{equation}\label{dec}
 \lVert \mcl_{T_n}\cdots \mcl_{T_2}\mcl_{T_1}h\rVert \le De^{-\lambda n}\lVert h\rVert \quad \text{for $h\in \mathcal B$ satisfying $h(1)=0$,}
\end{equation}
where $\mcl_{T_i}$ denotes the transfer operator associated with $T_i$.
From now on, we replace $\epsilon'$ with $\epsilon$ so that (\ref{dec}) holds for $T_1,\ldots,T_n\in X_\epsilon$.

We now build our cocycle by prescribing that for each $\om \in \Om$, $T_\om \in  X_{\epsilon'}$ and we consider
$\mcl_\om$  which is the transfer operator associated to $T_\omega$.
Then, it follows readily from~\eqref{dec} that
\begin{equation}\label{DEC}
 \lVert \mcl_\om^{(n)} h\rVert \le De^{-\lambda n} \lVert h\rVert \quad \text{for any $\om \in \Om$, $n\in \mathbb N$ and $h\in \mathcal B$, $h(1)=0$.}
\end{equation}

In addition, by decreasing $\epsilon$ if necessary, we have (see the proof of~\cite[Lemma 6.3]{DL})  that there exist $a\in (0, 1)$ and $B>0$ such that
\begin{equation}\label{wsly}
 \lvert \mcl_\om^{(n)} h\rvert_w \le B\lvert h\rvert_w \quad \text{and} \quad \lVert \mcl_\om^{(n)} h\rVert \le Ba^n\lVert h\rVert+B\lvert h\rvert_w,
\end{equation}
for every $\om \in \Om$, $n\in \mathbb N$ and $h\in \mathcal B$.  In particular, there exists $K>0$ such that
\begin{equation}\label{ub}
 \lVert \mcl_\om h\rVert \le K\lVert h\rVert \quad \text{for $\om \in \Om$ and $h\in \mathcal B$.}
\end{equation}

\subsubsection{$\mathbb{P}$-continuity of $\omega\mapsto \mathcal{L}_\omega$}
We assume $\Omega$ is a Borel subset of a complete separable metric space, $\mathcal{F}$ is the Borel sigma-algebra and $\sigma$ is a homeomorphism.
Unfortunately, in this (piecewise-hyperbolic) setting we are unable to establish strong measurability of the map $\omega \mapsto \mathcal L_\omega$ under the assumption that $\omega \mapsto T_\omega$ is measurable.  In order to be able to apply the weaker version of MET from~\cite{FLQ2}, we ask instead that $\omega \mapsto T_\omega$ is measurable and that it has a countable range.

\subsubsection{Quasi-compactness of the cocycle $\mcl$ and existence of Oseledets splitting}

Similarly to the description at the end of Section \ref{sect:qc0}, by Lemma 2.1 \cite{DFGTV}, the inequalities (\ref{wsly}) and (\ref{ub}) imply that the cocycle $\mathcal{R}$ is quasi-compact.
By quasi-compactness and $\mathbb{P}$-continuity of $\mathcal{L}$, the multiplicative ergodic theorem (Theorem 17, \cite{FLQ2}) yields the existence of a unique $\mathbb{P}$-continuous Oseledets splitting
$$\mathcal{B}_{1,1}=\left(\bigoplus_{j=1}^l Y_j(\omega)\right)\oplus V(\omega),$$
where each component of the splitting is equivariant under $\mathcal{L}_\omega$.
The $Y_j(\omega)$ are finite-dimensional and have corresponding (finite or infinite) sequence of  Lyapunov exponents $0=\lambda_1>\lambda_2>\ldots$

\subsubsection{One-dimensionality of the top Oseledets space}

The material in section \ref{sec:topspace} of the present work can be reused verbatim in the piecewise hyperbolic setting, replacing $(\mathcal{B}^{1,1},\|\cdot\|_{1,1})$ with $(\mathcal{B},\|\cdot\|)$ and $(\mathcal{B}^{0,2},\|\cdot\|_{0,2})$ with $(\mathcal{B}_w,\|\cdot\|_w)$. In particular we can construct a unique family of probability measures $(h^0_{\omega})_{\omega\in \Omega}\subset \mathcal{B}$ such that for $\mathbb{P}$-a.e. $\omega \in \Omega,$ $\mathcal{L}_{\omega}h^0_{\omega}=h^0_{\sigma\omega}.$

\subsection{The twisted cocycle}

Our observable will be a map $g\colon \Omega \times X \to \mathbb R$ such that $g(\om, \cdot)\in C^1$ for $\om \in \Om$ and
\begin{equation}\label{obs}
 M:=\esssup_{\om \in \Om} \lVert g(\om, \cdot)\rVert_{C^1} <\infty.
\end{equation}
We assume that $g$ is $\omega$-fibrewise centred: for \paeom, $h_\om^0 (g(\om, \cdot))=0.$

For $g\in C^1(X, \mathbb C)$ and $h\in \mathcal B$, we can  introduce $g\cdot h \in \mathcal B$ as in Section~\ref{sec:prelim}. Furthermore, for $\om \in \Om$, $\theta \in \mathbb C$, and $h\in \mathcal B$ set
$\mcl_\om^{\theta} (h)=\mcl_\om(e^{\theta g(\om, \cdot)}h).$
We will need the following lemma, which is analogous to Lemma 3.2~\cite{GL}.
\begin{lemma}
\label{EST}
 For $h\in \mathcal B$ and $g\in C^1(X, \mathbb C)$, we have that
 \[
  \lVert gh\rVert \le C\lvert g\rvert_{C^1}\lVert h\rVert,
 \]
 for some $C>0$, independent of $g$ and $h$.

\end{lemma}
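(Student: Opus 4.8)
The plan is to reduce to $h\in C^1(X,\mathbb C)$ and then bound the strong stable and strong unstable parts of $\|gh\|$ separately, the weak norm playing no role since $\|\cdot\|=\|\cdot\|_s+b\|\cdot\|_u$. Recall that $g\cdot h$ is by construction the limit in $\mathcal B$ of $gh_n$ for any sequence $h_n\in C^1(X,\mathbb C)$ with $h_n\to h$; hence once the inequality is proved for $C^1$ functions it passes to the limit (the differences $g(h_n-h_m)$ are then automatically Cauchy). So assume $h\in C^1(X,\mathbb C)$, in which case $gh$ is the ordinary product and, by~\eqref{d}, $\int_W(gh)\varphi\,dm=\int_W h\,(g\varphi)\,dm$ on every admissible leaf $W$.

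For the strong stable norm, fix $W\in\Sigma$ and $\varphi$ with $|\varphi|_{W,\alpha,q}\le 1$. Since $q<1$ and $X$ has bounded diameter, $C^1(X)\hookrightarrow C^q$ and the $C^q$ norm is submultiplicative up to a constant, so $|g\varphi|_{C^q(W)}\le C|g|_{C^q(W)}|\varphi|_{C^q(W)}\le C|g|_{C^1}|\varphi|_{C^q(W)}$; multiplying by $|W|^\alpha$ gives $|g\varphi|_{W,\alpha,q}\le C|g|_{C^1}|\varphi|_{W,\alpha,q}$. Therefore $\bigl|\int_W(gh)\varphi\,dm\bigr|=\bigl|\int_W h\,(g\varphi)\,dm\bigr|\le C|g|_{C^1}\|h\|_s$, and taking suprema $\|gh\|_s\le C|g|_{C^1}\|h\|_s$.

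The main work is the strong unstable norm. Fix $\epsilon\le\epsilon_0$, leaves $W_1,W_2\in\Sigma$ with $d_\Sigma(W_1,W_2)\le\epsilon$, and $\varphi_i$ with $|\varphi_i|_{C^1(W_i)}\le1$, $d_q(\varphi_1,\varphi_2)\le\epsilon$. Using the identification of $W_1$ with $W_2$ underlying the definitions of $d_\Sigma,d_q$ (\S3.1~\cite{DL}), let $\widetilde g$, $\widetilde{g\varphi_2}$ and $\Phi_2$ denote the transports to $W_1$ of $g|_{W_2}$, $(g\varphi_2)|_{W_2}$ and $\varphi_2$; thus $\widetilde{g\varphi_2}=\widetilde g\,\Phi_2$. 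Write
\[
\int_{W_1}(gh)\varphi_1\,dm-\int_{W_2}(gh)\varphi_2\,dm=\Big(\int_{W_1}h\,(g\varphi_1)\,dm-\int_{W_1}h\,\widetilde{g\varphi_2}\,dm\Big)+\Big(\int_{W_1}h\,\widetilde{g\varphi_2}\,dm-\int_{W_2}h\,(g\varphi_2)\,dm\Big),
\]
and decompose the integrand of the first bracket as $g\varphi_1-\widetilde{g\varphi_2}=g(\varphi_1-\Phi_2)+(g-\widetilde g)\Phi_2$. The pair $(\widetilde{g\varphi_2},g\varphi_2)$ is, after division by $C|g|_{C^1}$, an admissible pair of test functions on $(W_1,W_2)$ with parameter $\epsilon$ (their $C^1$ norms are $\le C|g|_{C^1}$, and their transports coincide so their $d_q$-distance is $0$), whence the second bracket is $\le C|g|_{C^1}\epsilon^\beta\|h\|_u$. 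In the first bracket, $|g(\varphi_1-\Phi_2)|_{C^q(W_1)}\le C|g|_{C^1}|\varphi_1-\Phi_2|_{C^q(W_1)}\le C|g|_{C^1}d_q(\varphi_1,\varphi_2)\le C|g|_{C^1}\epsilon$, so testing against $\|h\|_s$ and using $|W_1|^\alpha\le C$ gives a contribution $\le C|g|_{C^1}\epsilon\|h\|_s\le C|g|_{C^1}\epsilon^\beta\|h\|_s$ since $\beta\le1$; and $g-\widetilde g$ has $C^0$ norm $\le C|g|_{C^1}\epsilon$ (the identification moves points by $O(\epsilon)$) and $C^1$ norm $\le C|g|_{C^1}$, hence by interpolation $|(g-\widetilde g)\Phi_2|_{C^q(W_1)}\le C|g|_{C^1}\epsilon^{1-q}$, and testing against $\|h\|_s$ yields a contribution $\le C|g|_{C^1}\epsilon^{1-q}\|h\|_s\le C|g|_{C^1}\epsilon^\beta\|h\|_s$ because $\beta\le1-q$. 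Dividing by $\epsilon^\beta$ and taking suprema gives $\|gh\|_u\le C|g|_{C^1}(\|h\|_s+\|h\|_u)$.

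Combining the three bounds, $\|gh\|=\|gh\|_s+b\|gh\|_u\le C|g|_{C^1}\|h\|$, as claimed, and the inequality passes to general $h\in\mathcal B$ by the density argument above. The only delicate step is the unstable-norm estimate, and within it the term $(g-\widetilde g)\Phi_2$ measuring the variation of $g$ between the two nearby leaves: since $g$ is merely $C^1$ this variation cannot be controlled in $C^q$ by a full power of $\epsilon$, so one cannot simply treat $g\varphi_i$ as admissible test pairs and invoke $\|h\|_u$; instead one absorbs this term into $\|h\|_s$, where the interpolated gain $\epsilon^{1-q}$ in the leaf separation beats the required $\epsilon^\beta$ precisely because of the standing assumption $\beta\le\alpha\le1-q$ of~\cite{DL}. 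This is the piecewise-hyperbolic counterpart of~\cite[Lemma 3.2]{GL}, and the same scheme applies \emph{mutatis mutandis} when the Demers--Zhang billiard spaces of~\cite{DM1,DM2} are used.
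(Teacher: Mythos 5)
Your proof is correct and takes essentially the same route as the paper's: the same submultiplicativity bound $\lvert g\varphi\rvert_{W,\alpha,q}\le C\lvert g\rvert_{C^1}\lvert\varphi\rvert_{W,\alpha,q}$ for the stable norm, and for the unstable norm the same two-term decomposition into a transported test pair with vanishing $d_q$-distance (absorbed into $\lVert h\rVert_u$) plus a same-leaf error, itself split into a $d_q(\varphi_1,\varphi_2)$ piece and a leaf-to-leaf variation of $g$ controlled by the interpolation bound $\epsilon^{1-q}\le\epsilon^\beta$ and absorbed into $\lVert h\rVert_s$. The only cosmetic difference is that you transport the test functions to $W_1$ whereas the paper transports $(\varphi_1 g)\circ\Phi$ to $W_2$.
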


\begin{proof}
It is sufficient to establish the desired conclusion for $h\in C^1(X, \mathbb C)$.
Note that
\begin{equation}\label{qx}
 \lVert gh\rVert=\lVert gh\rVert_s+b\lVert gh\rVert_u.
\end{equation}
We have
\begin{equation}\label{est1}
 \lVert gh\rVert_s=\sup_{W\in \Sigma}\sup_{\substack{\varphi \in C^1(W, \mathbb C) \\ \lvert \varphi \rvert_{W, \alpha, q}\le 1}}\bigg{\lvert} \int_W h\varphi g\, dm \bigg{\rvert}
\le \lvert g\rvert_{C^1} \cdot \lVert h\rVert_s,
 \end{equation}
 since
 \[
  \lvert \varphi g\rvert_{W, \alpha, q}=\lvert W\rvert^\alpha \lvert \varphi g\rvert_{C^q(W, \mathbb C)}\le \lvert W\rvert^\alpha \lvert \varphi\rvert_{C^q(W, \mathbb C)}\lvert g\rvert_{C^1}
  =\lvert \varphi \rvert_{W, \alpha, q}\lvert g\rvert_{C^1} \le \lvert g\rvert_{C^1}.
 \]
Furthermore,
\[
  \lVert gh\rVert_u=\sup_{\epsilon \le \epsilon_0}\sup_{\substack{W_1, W_2\in \Sigma \\ d_{\Sigma} (W_1, W_2)\le \epsilon}}\sup_{\substack {\lvert \varphi_i\rvert_{C^1(W_i, \mathbb C)}
 \le 1 \\d_q(\varphi_1, \varphi_2)\le \epsilon}}\frac{1}{\epsilon^\beta}\bigg{\lvert}
 \int_{W_1}h\varphi_1g\, dm-\int_{W_2}h\varphi_2g\, dm\bigg{\rvert}.
\]
Using the notation as in~\cite[p.12]{DL} we have that
\[
 \begin{split}
  \frac{1}{\epsilon^\beta}\bigg{\lvert}
 \int_{W_1}h\varphi_1g\, dm-\int_{W_2}h\varphi_2g\, dm\bigg{\rvert} &\le   \frac{1}{\epsilon^\beta}\bigg{\lvert}
 \int_{W_1}h\varphi_1g\, dm-\int_{W_2}h((\varphi_1 g)\circ \Phi)\, dm\bigg{\rvert}\\
 &\phantom{\le}+\frac{1}{\epsilon^\beta}\bigg{\lvert}\int_{W_2}h((\varphi_1 g)\circ \Phi)\, dm -\int_{W_2}h\varphi_2g\, dm\bigg{\rvert}\\
 &=:(I)+(II),
 \end{split}
\]
where $\Phi:=G_{F_1}\circ G_{F_2}^{-1},$ and $F_1$ and $F_2$ are respectively the parametrization of $W_1$ and $W_2$ in the local charts.

Let us first estimate term $(I)$.
Note that \begin{equation}\label{uif1}
   \lvert \varphi_1 g\rvert_{C^1(W_1, \mathbb C)}\le \lvert \varphi_1\rvert_{C^1(W_1, \mathbb C)}\cdot \lvert g\rvert_{C^1(W_1, \mathbb C)}
        \le \lvert g\rvert_{C^1}.
        \end{equation}
 We now bound the term $\lvert (\varphi_1 g)\circ \Phi\rvert_{C^1(W_2, \mathbb C)}$. In the estimates that follow $C>0$ will denote
an arbitrary positive number independent of $g$ and $h$.
Observe that
\[
 \lvert (\varphi_1 g)\circ \Phi\rvert_{C^0(W_2, \mathbb C)}\le \lvert \varphi_1 g\rvert_{C^0(W_1, \mathbb C)}\le \lvert \varphi_1\rvert_{C^0(W_1, \mathbb C)}\cdot \lvert g\rvert_{C^0}\le
 \lvert \varphi_1\rvert_{C^1(W_1, \mathbb C)}\cdot \lvert g\rvert_{C^1}
 \le \lvert g\rvert_{C^1}.
\]
Furthermore, \footnote{$Lip_{q,W}(f):=\sup_{\substack{x\neq y \\ x, y\in W}} \frac{\lvert f(x)-f(y)\rvert}{\lvert x-y\rvert^q}$.}
\[
 Lip_{1, W_2}((\varphi_1 g)\circ \Phi)\le Lip_{1, W_1}(\varphi_1 g) \cdot Lip_{1, W_2}(\Phi)\le C Lip_{1, W_1}(\varphi_1 g),
\]
since $\sup_F \max \{\lvert G_F\rvert_{C^1}, \lvert G_F^{-1}\rvert_{C^1} \} <\infty$. Moreover, since
\[
 Lip_{1, W_1}(\varphi_1 g)\le \lvert \varphi_1\rvert_{C^0(W_1, \mathbb C)}Lip_{1, W_1} (g)+\lvert g\rvert_{C^0(W_1, \mathbb C)}Lip_{1, W_1} (\varphi_1),
\]
it follows that
\[
  Lip_{1, W_1}(\varphi_1 g)\le C\lvert g\rvert_{C^1}.
\]
Consequently,
\begin{equation}\label{uif2}
 \lvert (\varphi_1 g)\circ \Phi\rvert_{C^1(W_2, \mathbb C)}\le C\lvert g\rvert_{C^1}.
\end{equation}
Finally, we observe that
\begin{equation}\label{uif3}
 d_q(\varphi_1 g, (\varphi_1 g)\circ \Phi)=\lvert (\varphi_1 g)\circ G_{F_1}-(\varphi_1 g)\circ \Phi \circ G_{F_2}\rvert_{C^q(I_{r_1}, \mathbb C)}=0,
\end{equation}
since $\Phi \circ G_{F_2}=G_{F_1}$. Hence, \eqref{uif1}, \eqref{uif2} and~\eqref{uif3} imply that
\begin{equation}\label{uif4}
 (I)\le C \lvert g\rvert_{C^1}\lVert h\rVert_u.
\end{equation}

In order to estimate $(II)$, we need to bound
\[
 \lvert (\varphi_1 g)\circ \Phi-\varphi_2g\rvert_{W_2, \alpha, q}=\lvert W_2\rvert^\alpha \cdot  \lvert (\varphi_1 g)\circ \Phi-\varphi_2g\rvert_{C^q(W_2, \mathbb C)}
 \le C\lvert (\varphi_1 g)\circ \Phi-\varphi_2g\rvert_{C^q(W_2, \mathbb C)}.
\]
Note that
\[
 \begin{split}
  \lvert (\varphi_1 g)\circ \Phi-\varphi_2g\rvert_{C^q(W_2, \mathbb C)} &=\lvert (\varphi_1 g)\circ \Phi \circ G_{F_2}\circ G_{F_2}^{-1}-(\varphi_2g)\circ G_{F_2}\circ G_{F_2}^{-1}\rvert_{C^q(W_2, \mathbb C)}
 \\
 &\le C\lvert (\varphi_1 g)\circ \Phi \circ G_{F_2}-(\varphi_2g)\circ G_{F_2}\rvert_{C^q(I_{r_1}, \mathbb C)}\\
 &=  \lvert (g\varphi_1)\circ G_{F_1}-(g\varphi_2) \circ G_{F_2}\rvert_{C^q(I_{r_1}, \mathbb C)} \\
   &\le \lvert (g\circ G_{F_1})(\varphi_1 \circ G_{F_1})-(g\circ G_{F_1})(\varphi_2 \circ G_{F_2})\rvert_{C^q(I_{r_1}, \mathbb C)}\\
   &\phantom{\le}+ \lvert (g\circ G_{F_1})(\varphi_2 \circ G_{F_2})-(g\circ G_{F_2})(\varphi_2 \circ G_{F_2})\rvert_{C^q(I_{r_1}, \mathbb C)}\\
  &\le \lvert g\circ G_{F_1}\rvert_{C^q(I_{r_1}, \mathbb C)}\cdot d_q(\varphi_1, \varphi_2) \\
  &\phantom{\le}+\lvert g\circ G_{F_1}-g\circ G_{F_2}\rvert_{C^q(I_{r_1}, \mathbb C)}\cdot \lvert \varphi_2 \circ G_{F_2}\rvert_{C^q(I_{r_1}, \mathbb C)}\\
  &\le \epsilon \lvert g\circ G_{F_1}\rvert_{C^1(I_{r_1}, \mathbb C)} \\
  &\phantom{\le}+\lvert g\circ G_{F_1}-g\circ G_{F_2}\rvert_{C^q(I_{r_1}, \mathbb C)}\cdot \lvert \varphi_2 \circ G_{F_2}\rvert_{C^1(I_{r_1}, \mathbb C)}.\\
  \end{split}
\]
 Since $\sup_F\lvert G_F\rvert_{C^1(I_r, \mathbb C)}
<\infty$, we have  that
\[ \lvert g\circ G_{F_1}\rvert_{C^1(I_{r_1}, \mathbb C)} \le C\lvert g\rvert_{C^1} \quad \text{and} \quad
\lvert \varphi_2 \circ G_{F_2}\rvert_{C^1(I_{r_1}, \mathbb C)}\le C.\] Finally, it remains to estimate
\[
 \lvert g\circ G_{F_1}-g\circ G_{F_2}\rvert_{C^q(I_{r_1}, \mathbb C)}.
\]
Since $d_\Sigma(W_1, W_2)\le \epsilon$, it follows from the mean-value theorem that and setup from~\cite[p.12]{DL} that
\begin{equation}\label{el}
 \lvert g\circ G_{F_1}(t)-g\circ G_{F_2}(t)\rvert \le C\epsilon \lvert g\rvert_{C^1}  \quad \text{for $t\in I_{r_1}$.}
\end{equation}
Indeed, we have that
\[
 \begin{split}
  \lvert g\circ G_{F_1}(t)-g\circ G_{F_2}(t)\rvert &\le \lvert g\rvert_{C^1} \lvert G_{F_1}(t)-G_{F_2}(t)\rvert \\
  &=\lvert g\rvert_{C^1}\lvert \chi_j(x_1+(t, F_1(t)))-\chi_j(x_2+(t,F_2(t))) \rvert \\
  &\le C \lvert g\rvert_{C^1}(\lvert x_1-x_2\rvert +\lvert F_1(t)-F_2(t)\rvert) \\
  &\le C\lvert g\rvert_{C^1} d_\Sigma(W_1, W_2),
 \end{split}
\]
for $t\in I_{r_1}$ which implies~\eqref{el}.
Take now $t, s\in I_{r_1}$. Then, \eqref{el} implies that
\[
 \frac{\lvert g\circ G_{F_1}(t)-g\circ G_{F_2}(t)-g\circ G_{F_1}(s)+g\circ G_{F_2}(s)
\rvert}{\lvert t-s\rvert^q}\le \frac{2C\epsilon \lvert g\rvert_{C^1}}{\lvert t-s\rvert^q}.
 \]
On the other hand, by applying the mean-value theorem we have
\[
 \frac{\lvert g\circ G_{F_1}(t)-g\circ G_{F_2}(t)-g\circ G_{F_1}(s)+g\circ G_{F_2}(s)
\rvert}{\lvert t-s\rvert^q}\le \frac{2C\lvert g\rvert_{C^1}\lvert t-s\rvert}{\lvert t-s\rvert^q}.
 \]
One can now proceed as in~\cite[p.20]{DL} to show that
\[
 \sup_{t\neq s} \frac{\lvert g\circ G_{F_1}(t)-g\circ G_{F_2}(t)-g\circ G_{F_1}(s)+g\circ G_{F_2}(s)
\rvert}{\lvert t-s\rvert^q} \le C\lvert g\rvert_{C^1}\epsilon^{1-q}.
\]
This together with~\eqref{el} implies that
\[
  \lvert g\circ G_{F_1}-g\circ G_{F_2}\rvert_{C^q(I_{r_1}, \mathbb C)}\le C\lvert g\rvert_{C^1}\epsilon^{1-q}.
\]
We conclude that
\[
 \lvert (\varphi_1 g)\circ \Phi-\varphi_2g\rvert_{C^q(W_2, \mathbb C)}\le C\epsilon^{1-q} \lvert g\rvert_{C^1}\le C\epsilon^\beta \lvert g\rvert_{C^1}.
\]
Thus,
\begin{equation}\label{est2}
(II)\le C\lvert g\rvert_{C^1} \lVert h\rVert_s.
\end{equation}
The conclusion of the lemma follows directly from~\eqref{qx}, \eqref{est1}, \eqref{uif4} and~\eqref{est2}.
\end{proof}

The following proposition is analogous to Proposition \ref{twistcty}.
\begin{proposition}\label{AS}
 There exists a continuous function $K\colon \mathbb C \to (0, \infty)$ such that
 \begin{equation}\label{ubt}
  \lVert \mcl_\om^{\theta}h\rVert \le K(\theta)\lVert h\rVert, \quad \text{for $h\in \mathcal B$, $\theta \in \mathbb C$ and \paeom.}
 \end{equation}

\end{proposition}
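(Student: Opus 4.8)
The plan is to follow the proof of Proposition~\ref{twistcty}, using Lemma~\ref{EST} in place of \cite[Lemma 3.2]{GL} and the uniform bound~\eqref{ub} in place of~\eqref{ub_cont}.

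First I would observe that, since $g(\om, \cdot)\in C^1(X, \mathbb R)$ and $z\mapsto e^z$ is entire, the function $e^{\theta g(\om, \cdot)}$ belongs to $C^1(X, \mathbb C)$; hence $e^{\theta g(\om, \cdot)}h\in \mathcal B$ (via the product construction inherited from Section~\ref{sec:prelim}), and by~\eqref{ub},
\[
\lVert \mcl_\om^{\theta}h\rVert=\lVert \mcl_\om(e^{\theta g(\om, \cdot)}h)\rVert \le K\lVert e^{\theta g(\om, \cdot)}h\rVert, \quad \text{for $h\in \mathcal B$ and \paeom.}
\]
Applying Lemma~\ref{EST} with the multiplier $e^{\theta g(\om, \cdot)}$ then gives
\[
\lVert e^{\theta g(\om, \cdot)}h\rVert \le C\lvert e^{\theta g(\om, \cdot)}\rvert_{C^1}\lVert h\rVert,
\]
so it only remains to bound $\lvert e^{\theta g(\om, \cdot)}\rvert_{C^1}$ uniformly over $\om$ and continuously in $\theta$.

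For this I would use~\eqref{obs}: for \paeom\ one has $\lvert e^{\theta g(\om, \cdot)}\rvert_{C^0}\le e^{\lvert \theta\rvert M}$, since $\lvert e^{\theta g(\om, x)}\rvert=e^{\Re(\theta)g(\om, x)}\le e^{\lvert \theta\rvert M}$, and for each coordinate direction $\partial^j(e^{\theta g(\om, \cdot)})=\theta\,\partial^j g(\om, \cdot)\, e^{\theta g(\om, \cdot)}$, whence $\lvert \partial^j(e^{\theta g(\om, \cdot)})\rvert_{C^0}\le \lvert \theta\rvert M e^{\lvert \theta\rvert M}$. Consequently $\lvert e^{\theta g(\om, \cdot)}\rvert_{C^1}\le (1+\lvert \theta\rvert M)e^{\lvert \theta\rvert M}$ for \paeom, and setting
\[
K(\theta):=KC\,(1+\lvert \theta\rvert M)\,e^{\lvert \theta\rvert M},
\]
which is continuous and strictly positive on $\mathbb C$, yields~\eqref{ubt}.

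There is essentially no hard step here; the only point worth noting is that Lemma~\ref{EST} requires only $C^1$ control of the multiplier, which is precisely the regularity available for $g$ in the present (piecewise-hyperbolic) setting, and that the resulting $K(\theta)$ is non-random because $M$ together with the constants $K$ in~\eqref{ub} and $C$ in Lemma~\ref{EST} are all non-random. If a sharper dependence on $\theta$ were desired, one could instead estimate the $C^1$ norm of the product on admissible leaves directly, as in the proof of Proposition~\ref{twistcty}, but this refinement is not needed here.
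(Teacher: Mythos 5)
Your proposal is correct and follows essentially the same route as the paper: reduce via \eqref{ub} to bounding $\lVert e^{\theta g(\om,\cdot)}h\rVert$, apply Lemma~\ref{EST}, and then control $\lvert e^{\theta g(\om,\cdot)}\rvert_{C^1}$ uniformly in $\om$ using \eqref{obs}. The only cosmetic difference is that the paper bounds the Lipschitz seminorm of $e^{\theta g(\om,\cdot)}$ by the mean value theorem applied to $z\mapsto e^{\theta z}$ (obtaining $M\lvert\theta\rvert e^{2M\lvert\theta\rvert}$), whereas you differentiate directly; both give a non-random, continuous $K(\theta)$.
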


\begin{proof}
 Note that it follows from~\eqref{ub} and Lemma~\ref{EST} that
 \[
  \lVert \mcl_\om^{\theta}h\rVert=\lVert \mcl_\om(e^{\theta g(\om, \cdot)}h)\rVert \le K \lVert e^{\theta g(\om, \cdot)}h\rVert\le CK\lvert e^{\theta g(\om, \cdot)}\rvert_{C^1}
  \lVert h\rVert,
 \]
for $h\in \mathcal B$, $\theta \in \mathbb C$ and \paeom. Furthermore, observe that~\eqref{obs} implies that
\[
 \lvert e^{\theta g(\om, \cdot)}\rvert_{C^0}\le e^{M\lvert \theta\rvert} \quad \text{for \paeom.}
\]
Similarly, it follows from the mean-value theorem (applied for a map $z\mapsto e^{\theta z}$) and~\eqref{obs} that
\[
 \sup_{x\neq y} \frac{\lvert e^{\theta g(\om, x)}-e^{\theta g(\om, y)}\rvert}{\lvert x-y\rvert} \le \lvert \theta\rvert e^{2M\lvert \theta\rvert}
 \sup_{x\neq y} \frac{\lvert g(\om, x)-g(\om, y)\rvert}{\lvert x-y\rvert}\le M\lvert \theta\rvert e^{2M\lvert \theta\rvert}.
\]
The desired conclusion follows directly from the above estimates.

\end{proof}

Analogously to Proposition \ref{twistQC} we have:
\begin{proposition}
 For $\theta$ close to $0$, the cocycle $(\mcl_\om^\theta)_{\om \in \Om}$ is quasicompact.
\end{proposition}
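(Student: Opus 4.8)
The plan is to transcribe the proof of Proposition~\ref{twistQC} into the present piecewise hyperbolic framework, replacing $(\mathcal B^{1,1},\|\cdot\|_{1,1})$ by $(\mathcal B,\|\cdot\|)$ and $(\mathcal B^{0,2},\|\cdot\|_{0,2})$ by $(\mathcal B_w,|\cdot|_w)$, and replacing the multiplier estimate \cite[Lemma 3.2]{GL} by Lemma~\ref{EST}. Concretely, it suffices to establish two uniform-in-$\om$ estimates and then invoke \cite[Lemma 2.1]{DFGTV} exactly as at the end of the proof of \cite[Theorem 3.12]{DFGTV}: a Lasota--Yorke inequality
\[
\|\mcl_\om^{\theta,(N)}h\|\le\tilde\gamma\,\|h\|+B\,|h|_w,\qquad\tilde\gamma\in(0,1),
\]
valid for all $\theta$ close to $0$, all $h\in\mathcal B$ and \paeom, together with a weak-norm bound $\|\mcl_\om^\theta h\|_w\le\tilde B\,|h|_w$ for $\theta$ near $0$. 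Combined with \eqref{DEC}, these yield quasi-compactness of the twisted cocycle.

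For the first estimate I would argue as in Proposition~\ref{twistQC}: using \eqref{wsly}, fix $N\in\N$ with $\gamma:=Ba^N<1$, write the telescoping identity
\[
\mcl_\om^{\theta,(N)}-\mcl_\om^{(N)}=\sum_{j=0}^{N-1}\mcl_{\sigma^{N-j}\om}^{\theta,(j)}\big(\mcl_{\sigma^{N-1-j}\om}^\theta-\mcl_{\sigma^{N-1-j}\om}\big)\mcl_\om^{(N-1-j)},
\]
and bound the outer factors by $\|\mcl_\om^{(N-1-j)}\|\le K^{N-1-j}$ (from \eqref{ub}) and $\|\mcl_{\sigma^{N-j}\om}^{\theta,(j)}\|\le K(\theta)^j$ (from Proposition~\ref{AS}, i.e.\ \eqref{ubt}). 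The middle factor is controlled by
\[
\|(\mcl_\om^\theta-\mcl_\om)h\|=\|\mcl_\om((e^{\theta g(\om,\cdot)}-1)h)\|\le K\,\|(e^{\theta g(\om,\cdot)}-1)h\|\le CK\,\big|e^{\theta g(\om,\cdot)}-1\big|_{C^1}\|h\|,
\]
using \eqref{ub} and Lemma~\ref{EST}; and by the mean value theorem applied to $z\mapsto e^{\theta z}$ together with \eqref{obs} one gets $\esssup_{\om\in\Om}|e^{\theta g(\om,\cdot)}-1|_{C^1}\le C|\theta|$ for $|\theta|<1$ --- this is precisely the computation already carried out in the proof of Proposition~\ref{AS}. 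Hence $\|\mcl_\om^{\theta,(N)}-\mcl_\om^{(N)}\|\le C|\theta|\sum_{j=0}^{N-1}K^{N-1-j}K(\theta)^j$, and combining this with the second inequality in \eqref{wsly} gives, for $\theta$ small enough, the claimed Lasota--Yorke bound with some $\tilde\gamma\in(0,1)$ independent of $\om$.

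For the weak-norm bound, for $h\in C^1(X,\mathbb C)$ and $\varphi\in C^1(W,\mathbb C)$ with $|\varphi|_{C^1(W,\mathbb C)}\le1$ I would write $\int_W e^{\theta g(\om,\cdot)}h\varphi\,dm=\int_W h\,(e^{\theta g(\om,\cdot)}\varphi)\,dm$ and use $|e^{\theta g(\om,\cdot)}\varphi|_{C^1(W,\mathbb C)}\le|e^{\theta g(\om,\cdot)}|_{C^1}$ to obtain $|e^{\theta g(\om,\cdot)}h|_w\le|e^{\theta g(\om,\cdot)}|_{C^1}\,|h|_w$; then the first inequality of \eqref{wsly} yields $\|\mcl_\om^\theta h\|_w=|\mcl_\om(e^{\theta g(\om,\cdot)}h)|_w\le B\,|e^{\theta g(\om,\cdot)}|_{C^1}\,|h|_w=:\tilde B\,|h|_w$, with $\tilde B$ finite for $\theta$ near $0$ by \eqref{obs}; the case of general $h\in\mathcal B$ follows by density of $C^1$. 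Given these two estimates together with \eqref{DEC}, quasi-compactness follows from \cite[Lemma 2.1]{DFGTV} as in \cite[Theorem 3.12]{DFGTV}.

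I do not expect a genuine obstacle here, since the argument is essentially a transcription of Proposition~\ref{twistQC}. The only point requiring a little care is that Lemma~\ref{EST} controls the multiplication operator $h\mapsto e^{\theta g(\om,\cdot)}h$ via the $C^1$ norm of the multiplier rather than via a Banach norm depending linearly on a weaker norm of $h$; this makes it necessary to verify that $|e^{\theta g(\om,\cdot)}-1|_{C^1}$ is small uniformly in $\om$ when $\theta$ is small, which is immediate from \eqref{obs} and is already contained in the proof of Proposition~\ref{AS}.
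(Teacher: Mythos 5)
Your proposal is correct and follows essentially the same route as the paper's own proof: the same telescoping decomposition of $\mcl_\om^{\theta,(N)}-\mcl_\om^{(N)}$, the same bounds on the factors via \eqref{ub}, \eqref{ubt} and Lemma~\ref{EST}, the same mean-value-theorem estimate $|e^{\theta g(\om,\cdot)}-1|_{C^1}\le C|\theta|$, and the same conclusion via the argument of \cite[Theorem 3.12]{DFGTV}. You even supply the details of the weak-norm bound that the paper dispatches with ``similarly, one can show''.
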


\begin{proof}
 We follow closely~\cite[Lemma 3.13]{DFGTV}. Observe~\eqref{wsly} and choose $N\in \mathbb N$ such that $\gamma:=Ba^N<1$. Hence,
 \[
 \begin{split}
  \lVert \mcl_\om^{\theta, (N)} h\rVert &\le \lVert  \mcl_\om^{ (N)} h\rVert+\lVert \mcl_\om^{\theta, (N)}-\mcl_\om^{(N)}\rVert \cdot \lVert h\rVert \\
  &\le
  \gamma \lVert h\rVert+B\lvert h\rvert_w+\lVert \mcl_\om^{\theta, (N)}-\mcl_\om^{(N)}\rVert \cdot \lVert h\rVert.
  \end{split}
 \]
 On the other hand, we have that
\[
 \mcl_\om^{\theta, (N)}-\mcl_\om^{(N)}=\sum_{j=0}^{N-1} \mcl_{\sigma^{N-j} \om}^{\theta, (j)}(\mcl_{\sigma^{N-1-j} \om}^\theta -\mcl_{\sigma^{N-1-j} \om})\mcl_\om^{(N-1-j)}.
\]
It follows from~\eqref{ub} and~\eqref{ubt} that
\[
 \lVert \mcl_\om^{(N-1-j)}\rVert \le K^{N-1-j} \quad \text{and} \quad \lVert \mcl_{\sigma^{N-j} \om}^{\theta, (j)}\rVert \le K(\theta)^j.
\]
Furthermore, using~\eqref{ub} and Lemma~\ref{EST}, we have that for any $h\in \mathcal B$ and \paeom,
\[
 \lVert (\mcl_\om^\theta -\mcl _\om)(h) \rVert=\lVert \mcl_\om (e^{\theta g(\om, \cdot)}h-h)\rVert \le K\lVert (e^{\theta g(\om, \cdot)}-1)h\rVert
 \le CK \lvert e^{\theta g(\om, \cdot)}-1\rvert_{C^1} \lVert h\rVert.
 \]
 On the other hand, using~\eqref{obs} and applying the mean value theorem for the map $z\mapsto e^{\theta z}$, it is easy to verify that there exists $C'>0$ such that
for $\theta \in B_{\mathbb C}(0, 1)$,
\begin{equation}\label{ao2}
  \lvert e^{\theta g(\om, \cdot)}-1\rvert_{C^1} \le C'\lvert  \theta \rvert  \quad \text{for \paeom.}
\end{equation}
Hence, there exists $\tilde C>0$ such that
\[
 \lVert \mcl_\om^\theta -\mcl _\om\rVert \le \tilde C\lvert \theta \rvert, \quad \text{for \paeom.}
\]
We conclude that
\[
 \lVert \mcl_\om^{\theta, (N)}-\mcl_\om^{(N)}\rVert \le \tilde C\lvert  \theta \rvert \sum_{j=0}^{N-1}K^{N-1-j}K(\theta)^j,
\]
and therefore there exists $\tilde \gamma \in (0, 1)$ such that for any $\theta$ sufficiently close to $0$ and $h\in \mathcal B$,
\begin{equation}\label{0320}
 \lVert \mcl_\om^{\theta, (N)} h\rVert \le \tilde \gamma \lVert h\rVert+B\lvert h\rvert_w.
\end{equation}
Similarly, one can show that there exists $\tilde B>0$ such that for any $\theta$ sufficiently close to $0$ and $h\in \mathcal B$,
\begin{equation}\label{0321}
 \lvert \mcl_\om^\theta h\rvert_w\le \tilde B\lvert h\rvert_w.
\end{equation}
The conclusion of the proposition follows from~\eqref{0320} and~\eqref{0321} by arguing as in~\cite[Theorem 3.12]{DFGTV}.
\end{proof}

\subsection{Regularity of the top Oseledets space, convexity of $\Lambda$}
The regularity of the top Oseledets space of the twisted cocycles follows identically as in Section \ref{sec:regularity}, with Lemma~\ref{EST} used in place of Lemma~3.2~\cite{GL} in the proof of Lemma~\ref{lem:analyt}. Moreover the family of probability measures $h^0_{\omega}$ will allow us to define the fibred measure $\mu_{\omega}$ as we did in Section \ref{sec:convexity}.

\subsection{Large deviation principle and central limit theorem}

The results of Sections \ref{sec:Lambda} and \ref{sec:ldp_clt} follow verbatim with the obvious modifications.
We thus obtain our main results for piecewise hyperbolic dynamics.

\begin{mainthm}[Quenched large deviations theorem]\label{thm:ldtpwhyp}
In the setting of Section \ref{sect:pwh}, there exists $\epsilon_0>0$ and a non-random function $c\colon (-\epsilon_0, \epsilon_0) \to \mathbb R$ which is nonnegative, continuous, strictly convex, vanishing only at $0$ and such that
\[
\lim_{n\to \infty} \frac 1 n \log \mu_\om(S_n g(\om, \cdot ) >n\epsilon)=-c(\epsilon), \quad \text{for  $0<\epsilon <\epsilon_0$ and \paeom}.
\]
\end{mainthm}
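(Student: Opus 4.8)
The plan is to follow the proof of Theorem~\ref{thm:ldt} line by line, substituting the piecewise-hyperbolic analogues of the objects already assembled in Section~\ref{sect:pwh}: replace $(\mathcal B^{1,1},\|\cdot\|_{1,1})$ by $(\mathcal B,\|\cdot\|)$, $(\mathcal B^{0,2},\|\cdot\|_{0,2})$ by $(\mathcal B_w,|\cdot|_w)$, Lemma~3.2 of~\cite{GL} by Lemma~\ref{EST}, and the strongly-measurable MET of~\cite{GTQuas1} by the $\mathbb P$-continuous MET of~\cite{FLQ2}. The required structural inputs are already in place: the untwisted cocycle $\mathcal L$ is quasi-compact with one-dimensional top Oseledets space spanned by a family of probability measures $h_\om^0$; the twisted cocycle $\mathcal L^\theta$ is quasi-compact for $\theta$ near $0$ with one-dimensional top Oseledets space $Y_1^\theta(\om)$ carrying equivariant eigendata $(h_\om^\theta,\lambda_\om^\theta)$; and the regularity machinery of Section~\ref{sec:regularity}, with Lemma~\ref{EST} in place of~\cite[Lemma 3.2]{GL}, shows $\theta\mapsto h^\theta$ and the auxiliary maps $G,H,F$ are $C^\infty$.

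First I would record, exactly as in Proposition~\ref{cor:LamHatLam}, that the top Lyapunov exponent $\Lambda(\theta)$ of $\mathcal L^\theta$ equals $\hat\Lambda(\theta)=\int_\Om \log|\lambda_\om^\theta|\,d\mathbb P(\om)=\int_\Om \log|h_\om^\theta(e^{\theta g(\om,\cdot)})|\,d\mathbb P(\om)$, and that (analogue of Proposition~\ref{lem:Lam''0}) $\Lambda$ is of class $C^2$ on a neighbourhood of $0$ with $\Lambda(0)=0$, $\Lambda'(0)=0$ and $\Lambda''(0)=\Sigma^2$. Under the running assumption $\Sigma^2>0$ this makes $\Lambda$ strictly convex near $0$; the degenerate case $\Sigma^2=0$ is handled via the coboundary characterisation of~\cite{DFGTV2}, as in Section~\ref{sec:convexity}.

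Next I would establish the analogues of Lemmas~\ref{L:growthExpSums} and~\ref{need}: for $\mathbb P$-a.e.\ $\om$ and $\theta\in\C$ near $0$,
\[
\lim_{n\to\infty}\frac1n\log\Big|\int e^{\theta S_n g(\om,x)}\,d\mu_\om(x)\Big|
=\lim_{n\to\infty}\frac1n\log\big|h_\om^0(e^{\theta S_n g(\om,\cdot)})\big|=\Lambda(\theta).
\]
This uses the coding $(\mathcal L_\om^{\theta,(n)}h)(\varphi)=h(e^{\theta S_n g(\om,\cdot)}(\varphi\circ T_\om^n))$ (the analogue of Lemma~\ref{dualtwist}, obtained from Proposition~\ref{aux} and the multiplication construction), the decomposition of $h_\om^0$ along $Y_1^\theta(\om)\oplus H_\om^\theta$, the exponential decay on the complement (analogue of Lemma~\ref{lem:UnifExpDecayY2}), and differentiability of $\theta\mapsto\phi^\theta$. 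The G\"artner--Ellis theorem (Theorem~4.1 of~\cite{DFGTV}, cf.~\cite{hennion}) applied fibrewise, together with the regularity and strict convexity of $\Lambda$, then yields the large deviation principle with rate function $c(\epsilon)=\sup_{\theta}(\theta\epsilon-\Lambda(\theta))$, which is nonnegative, continuous, strictly convex, and vanishes only at $0$ (since $\Lambda(0)=\Lambda'(0)=0$ and $\Lambda$ is strictly convex), giving the stated limit for $0<\epsilon<\epsilon_0$.

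The main obstacle is not the large-deviations argument itself, which is soft once $\Lambda$ is known to be $C^2$ and strictly convex, but rather verifying that every functional-analytic estimate used in Sections~\ref{sec:InitialCocycle}--\ref{sec:ldp_clt} has a valid counterpart in the Demers--Liverani spaces: the uniform Lasota--Yorke bounds~\eqref{wsly}, the multiplication estimate Lemma~\ref{EST}, and the distributional bound~\eqref{dist} must combine to give the analyticity of $\theta\mapsto\mathcal L_\om^\theta$ and of $G,H$; and one must ensure that working with $\mathbb P$-continuity (rather than strong measurability) still delivers a usable Oseledets splitting from~\cite{FLQ2}. These checks having been carried out in the preceding subsections, the remainder is a transcription of the proof of Theorem~\ref{thm:ldt}.
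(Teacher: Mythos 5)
Your proposal is correct and follows essentially the same route as the paper: the paper's own proof consists precisely of the observation that the arguments of Sections \ref{sec:Lambda} and \ref{sec:ldp_clt} (and hence the proof of Theorem \ref{thm:ldt} via Proposition \ref{lem:Lam''0}, Lemma \ref{need} and the G\"artner--Ellis theorem) carry over verbatim once the substitutions you list --- $(\mathcal B,\|\cdot\|)$ for $(\mathcal B^{1,1},\|\cdot\|_{1,1})$, $(\mathcal B_w,|\cdot|_w)$ for $(\mathcal B^{0,2},\|\cdot\|_{0,2})$, Lemma \ref{EST} for \cite[Lemma 3.2]{GL}, and the $\mathbb P$-continuous MET of \cite{FLQ2} --- have been verified in Section \ref{sect:pwh}. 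Your identification of the needed structural inputs (quasi-compactness and one-dimensionality of the top space for the twisted cocycle, $C^2$ regularity and strict convexity of $\Lambda$ with $\Lambda''(0)=\Sigma^2$, the coding lemma, and the Legendre-transform rate function) matches the paper's argument.
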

\begin{mainthm}[Quenched central limit theorem] \label{thm:cltpwhyp}
In the setting of Section \ref{sect:pwh}, assume that the non-random \textit{variance} $\Sig^2$, defined in \eqref{variance} satisfies $\Sig^2>0$.
Then, for every bounded and continuous function $\phi \colon \R \to \R$ and \paeom, we have
\[
\lim_{n\to\infty}\int \phi \bigg{(}\frac{S_n g(\om, x)}{ \sqrt n}\bigg{)}\, d\mu_\om (x)=\int \phi \, d\mathcal N(0, \Sig^2).
\]
(The discussion in \S\ref{sec:convexity} deals with the degenerate case $\Sig^2=0$).
\end{mainthm}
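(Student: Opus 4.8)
The plan is to transcribe the Nagaev--Guivarc'h spectral argument from the proof of Theorem~\ref{thm:clt} almost word for word, replacing the Gou\"ezel--Liverani pair $(\mathcal{B}^{1,1},\|\cdot\|_{1,1})$, $(\mathcal{B}^{0,2},\|\cdot\|_{0,2})$ by the Demers--Liverani pair $(\mathcal{B},\|\cdot\|)$, $(\mathcal{B}_w,|\cdot|_w)$, and invoking the piecewise-hyperbolic analogues of the structural facts already assembled in Section~\ref{sect:pwh}: quasi-compactness of the twisted cocycle $(\mathcal{L}_\om^\theta)_{\om\in\Om}$ for $\theta$ near $0$; one-dimensionality of its top Oseledets space, with equivariant eigendata $h_\om^\theta$, $\lambda_\om^\theta$ and dual functional $\phi_\om^\theta$ normalised by $h_\om^\theta(1)=1$, $\phi_\om^\theta(h_\om^\theta)=1$ and satisfying $\mathcal{L}_\om^\theta h_\om^\theta=\lambda_\om^\theta h_{\sigma\om}^\theta$; differentiability of $\theta\mapsto\phi^\theta$; and, as the analogue of Proposition~\ref{lem:Lam''0}, that $\Lambda$ is $C^2$ near $0$ with $\Lambda(0)=\Lambda'(0)=0$ and $\Lambda''(0)=\Sig^2$.

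First I would record the coding of Birkhoff sums through the twisted cocycle: the analogue of Lemma~\ref{dualtwist} (using Proposition~\ref{aux} in place of \eqref{aux_cont}) gives, for \paeom\ and $\varphi\in C^1(X,\mathbb{C})$, the identity $(\mathcal{L}_\om^{\theta,(n)}h)(\varphi)=h\big(e^{\theta S_n g(\om,\cdot)}(\varphi\circ T_\om^{(n)})\big)$; since $h_\om^0$ represents the probability measure $\mu_\om$, taking $h=h_\om^0$ and $\varphi\equiv 1$ yields $\int e^{\theta S_n g(\om,x)}\,d\mu_\om(x)=(\mathcal{L}_\om^{\theta,(n)}h_\om^0)(1)$. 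Decomposing $h_\om^0=\phi_\om^\theta(h_\om^0)\,h_\om^\theta+\big(h_\om^0-\phi_\om^\theta(h_\om^0)h_\om^\theta\big)$ along $\mathcal{B}=Y_\om^\theta\oplus H_\om^\theta$ and using equivariance together with $h_{\sigma^n\om}^\theta(1)=1$,
\[
\int e^{\theta S_n g(\om,x)}\,d\mu_\om(x)=\phi_\om^\theta(h_\om^0)\prod_{k=0}^{n-1}\lambda_{\sigma^k\om}^\theta+\big(\mathcal{L}_\om^{\theta,(n)}(h_\om^0-\phi_\om^\theta(h_\om^0)h_\om^\theta)\big)(1),
\]
and the last term is $O(r^n)$ uniformly over $\theta$ in a fixed complex neighbourhood of $0$ by the piecewise-hyperbolic analogue of Lemma~\ref{lem:UnifExpDecayY2}.

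The substantive step is the scaling $\theta=it/\sqrt{n}$. One expands $\theta\mapsto\lambda_\om^\theta$ to second order around $\theta=0$: here $\lambda_\om^0=h_\om^0(1)=1$ and, crucially, $\tfrac{d}{d\theta}\lambda_\om^\theta|_{\theta=0}=h_\om^0(g(\om,\cdot))=0$ because $g$ is fibrewise centred, so $\log\lambda_\om^\theta=\tfrac12\theta^2\,\xi_\om+o(\theta^2)$ uniformly in $\om$ for some bounded measurable $\xi$ with $\int\xi\,d\bbp=\Lambda''(0)=\Sig^2$. Summing over the orbit, $\sum_{k=0}^{n-1}\log\lambda_{\sigma^k\om}^{it/\sqrt n}=-\tfrac{t^2}{2n}\sum_{k=0}^{n-1}\xi_{\sigma^k\om}+o(1)$, and Birkhoff's ergodic theorem gives $\tfrac1n\sum_{k=0}^{n-1}\xi_{\sigma^k\om}\to\Sig^2$; combined with $\phi_\om^\theta(h_\om^0)\to\phi_\om^0(h_\om^0)=1$ (continuity of $\theta\mapsto\phi^\theta$) and the vanishing error term, this yields $\int e^{(it/\sqrt n)S_n g(\om,x)}\,d\mu_\om(x)\to e^{-t^2\Sig^2/2}$ for \paeom. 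Since $\Sig^2>0$ this is the characteristic function of $\mathcal{N}(0,\Sig^2)$, and L\'evy's continuity theorem upgrades the convergence to $\int\phi(S_n g(\om,\cdot)/\sqrt n)\,d\mu_\om\to\int\phi\,d\mathcal{N}(0,\Sig^2)$ for every bounded continuous $\phi$; the degenerate case $\Sig^2=0$ is the coboundary dichotomy recalled in \S\ref{sec:convexity}.

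I expect no genuine obstacle at this stage, since all the analytic work has been front-loaded into the earlier parts of Section~\ref{sect:pwh}, in particular into Lemma~\ref{EST} and the twisted Lasota--Yorke / quasi-compactness estimates built on the proof of \cite[Lemma~6.3]{DL}. If one must single out a delicate point, it is re-checking that the exponential decay on $H_\om^\theta$ in the analogue of Lemma~\ref{lem:UnifExpDecayY2} is \emph{uniform in $\theta$} near $0$, because that is where the weak-norm perturbation bounds for the singular maps $T_\om$ enter; but this is exactly the content of the twisted estimates already in place, so the argument goes through verbatim.
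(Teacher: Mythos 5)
Your proposal is correct and follows essentially the same route as the paper, whose own proof of Theorem~\ref{thm:cltpwhyp} simply states that the arguments of Sections~\ref{sec:Lambda} and~\ref{sec:ldp_clt} (themselves transcriptions of the Nagaev--Guivarc'h proof of Theorem~B in \cite{DFGTV}) carry over verbatim after replacing $(\mathcal{B}^{1,1},\|\cdot\|_{1,1})$ and $(\mathcal{B}^{0,2},\|\cdot\|_{0,2})$ by $(\mathcal{B},\|\cdot\|)$ and $(\mathcal{B}_w,|\cdot|_w)$. You have in fact written out more of the argument (the eigendata decomposition, the $\theta=it/\sqrt n$ scaling with the Birkhoff-sum expansion of $\log\lambda^\theta_\om$, and L\'evy continuity) than the paper does, and you correctly identify the supporting ingredients from Section~\ref{sect:pwh} (Lemma~\ref{EST}, the twisted quasi-compactness, the analogues of Lemmas~\ref{dualtwist} and~\ref{lem:UnifExpDecayY2}, and Proposition~\ref{lem:Lam''0}).
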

\subsection{Local central limit theorem}
\begin{mainthm}[Quenched Local central limit theorem] \label{thm:lclt_piec}
In the setting of Section \ref{sect:pwh}, suppose that condition (L) holds, where the functional norm in (L) is now $\mathcal{B}.$

Then, for \paeom\  and every bounded interval $J\subset \R$, we have
 \[
  \lim_{n\to \infty}\sup_{s\in \R} \bigg{\lvert} \Sig \sqrt{n} \mu_\om (s+S_n g(\om, \cdot)\in J)-\frac{1}{\sqrt{2\pi}}e^{-\frac{s^2}{2n\Sig^2}}\lvert J\rvert \bigg{\rvert}=0.
 \]
\end{mainthm}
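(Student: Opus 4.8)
The plan is to transcribe, in the Demers--Liverani anisotropic space $\mathcal B$, the Fourier-inversion argument already used for Theorem~\ref{thm:lclt}, which itself follows the proof of Theorem~C in~\cite{DFGTV}. All the spectral input is in place: Section~\ref{sect:pwh} has reproduced quasi-compactness of the twisted cocycle $(\mcl_\om^\theta)_{\om\in\Om}$ for $\theta$ near $0$, one-dimensionality of its top Oseledets space $Y_1^\theta(\om)$, the scalar $\lot=h_\om^\theta(e^{\theta g(\om,\cdot)})$ with $\mcl_\om^\theta h_\om^\theta=\lot h_{\sig\om}^\theta$, the equivariant dual functionals $\phi_\om^\theta$ satisfying $(\mcl_\om^\theta)^*\phi_{\sig\om}^\theta=\lot\phi_\om^\theta$ and $\phi_\om^\theta(h_\om^\theta)=1$, the splitting $\mathcal B=Y_\om^\theta\oplus H_\om^\theta$, and the $\theta$-differentiability near $0$ of $\theta\mapsto h^\theta$, $\theta\mapsto\lot$, $\theta\mapsto\phi^\theta$ and $\theta\mapsto\Lam$, with $\Lam'(0)=0$ and $\Lam''(0)=\Sig^2$ (positive under the standing assumption). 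The differentiability of $\theta\mapsto\phi^\theta$ is obtained exactly as in Remark~\ref{phidiffrem}, using the $C^1$ estimate of Lemma~\ref{EST} in place of the variation estimates of~\cite{DFGTV}.

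First I would record the two auxiliary facts that make the inversion argument run in $\mathcal B$. The coding identity $(\mcl_\om^{\theta,(n)}h)(\varphi)=h\big(e^{\theta S_n g(\om,\cdot)}(\varphi\circ T_\om^n)\big)$ for $h\in\mathcal B$ and $\varphi\in C^1(X,\C)$ --- the analogue of Lemma~\ref{dualtwist} --- follows from Proposition~\ref{aux} and the construction of $g\cdot h$ in $\mathcal B$, iterated as in the proof of Lemma~3.3(2) of~\cite{DFGTV}. The uniform exponential decay off the top space, i.e.\ the existence of $C>0$ and $0<r<1$ with $\big|\mcl_\om^{\theta,(n)}\big(h_\om^0-\phi_\om^\theta(h_\om^0)h_\om^\theta\big)(1)\big|\le Cr^n$ uniformly for $\theta$ near $0$, $n\in\N$ and \paeom\ --- the analogue of Lemma~\ref{lem:UnifExpDecayY2} --- follows verbatim from the proof of Lemma~4.4 of~\cite{DFGTV}, using the uniform Lasota--Yorke bounds~\eqref{wsly} and the Oseledets gap of $(\mcl_\om^\theta)$.

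With these in hand, the proof copies the proof of Theorem~C in~\cite{DFGTV}. Applying Fourier inversion to a mollification of $\one_J$ whose transform has compact support, one writes $\Sig\sqrt n\,\mu_\om(s+S_n g(\om,\cdot)\in J)$ as an integral over $t$ of $e^{-its}$ against $\int e^{it S_n g(\om,x)}\,d\mu_\om(x)=h_\om^0(e^{it S_n g(\om,\cdot)})=\big(\mcl_\om^{it,(n)}h_\om^0\big)(1)$, after the usual rescaling $t\mapsto t/(\Sig\sqrt n)$. The $t$-range is then split as in~\cite{DFGTV}: on $|t|\le\delta$ one uses the decomposition $\big(\mcl_\om^{it,(n)}h_\om^0\big)(1)=\big(\prod_{j=0}^{n-1}\lambda_{\sig^j\om}^{it}\big)\phi_\om^{it}(h_\om^0)+\big(\mcl_\om^{it,(n)}(h_\om^0-\phi_\om^{it}(h_\om^0)h_\om^{it})\big)(1)$, analyzing the first term through the $\theta$-differentiability of $\lot$ and $\phi^\theta$ together with $\Lam'(0)=0$, $\Lam''(0)=\Sig^2$ to extract the Gaussian, while the remainder term~(III) is controlled by the decay lemma above; on $\delta\le|t|\le L$ one invokes condition~(L) in the form $\lVert\mcl_\om^{it,(n)}\rVert\le C(\om)\rho^n$ on compact subintervals of $\R\setminus\{0\}$, combined with the distributional bound~\eqref{dist}, to obtain an exponentially small contribution (term~(IV)); and the tail $|t|>L$ drops out by the compact support of the mollifier's transform.

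I expect the main obstacle to lie not in any of these estimates --- they are routine transcriptions once the anisotropic-space analogues are set up --- but in condition~(L) itself: as flagged before the statement, no proof of~(L) is currently available for piecewise hyperbolic cocycles, so it is taken as a hypothesis. Within the proof the only genuine verification is that Lemma~\ref{dualtwist}, Lemma~\ref{lem:UnifExpDecayY2} and the differentiability of $\theta\mapsto\phi^\theta$ carry over to $\mathcal B$, which they do because the required estimates have all been reproduced in Section~\ref{sect:pwh} --- in particular Proposition~\ref{aux}, Lemma~\ref{EST}, and the bounds~\eqref{DEC}, \eqref{wsly}, \eqref{ub} and~\eqref{dist}.
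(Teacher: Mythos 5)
Your proposal is correct and follows essentially the same route as the paper: the paper proves Theorem~\ref{thm:lclt_piec} precisely by observing that the proof of Theorem~\ref{thm:lclt} (itself a transcription of Theorem~C of \cite{DFGTV}) carries over once the $\mathcal B$-analogues of Lemma~\ref{dualtwist}, Lemma~\ref{lem:UnifExpDecayY2}, and the differentiability of $\theta\mapsto\phi^\theta$ are in place, with term~(III) controlled by the decay off the top Oseledets space and term~(IV) by the assumed condition~(L). Your identification of the required ingredients (Proposition~\ref{aux}, Lemma~\ref{EST}, and the bounds~\eqref{DEC}, \eqref{wsly}, \eqref{ub}, \eqref{dist}) matches the paper's.
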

The LCLT can also be obtained under the assumptions HK A1, A2, A3 and the hypothesis of Lemma \ref{EQUI} with the obvious change of the functional space which is now $\mathcal{B}.$ The Lasota--Yorke inequality for the twisted operator follows now  by adapting the analogous proof in \cite{DL} for the usual operator. So we have the analogous statement as in Theorem \ref{HKLCLT}.
\subsection{Billiards}
The results of this section  also apply  to the billiard map associated with both a finite and infinite horizon Lorentz gas having smooth scatterers with strictly positive curvature: we refer in the following to the papers by Demers and Zhang \cite{DM1, DM2}.\\

  We first recall the setting. Let us consider on the bidimensional torus $\mathbb{T}^2$ a finite number of pairwise disjoint and simply connected convex regions $\{\Gamma\}_{i=1}^d,$ which moreover have $C^3$ boundary curves $\partial \Gamma_i$ with strictly positive curvature. We denote by $\text{int}A$ the interior of the set $A;$ then the billiard table $Q$ is defined as $Q=\mathbb{T}^2\backslash \cup_i\text{int}\Gamma_i.$
On the phase space  $\mathcal{M}=Q\times \mathbb{S}^1/\sim,$ with the conventional identifications at the boundaries, we define the billiard flow, which is  induced by a particle traveling at unit speed and undergoing elastic collisions at the boundaries.
We will be concerned instead with the billiard map $T:M\rightarrow M$ as the Poincar\'e map corresponding to collisions with scatterers and defined on $M=\cup_i\partial \Gamma_i\times[-\pi/2, \pi/2].$ We put on $M$ the coordinates $(r, \theta),$ where $r\in \cup_i\partial \Gamma_i$ is parametrized by arc length and $\theta$ is the angle formed by the unit tangent vector at $r$ with the normal pointing into the domain $Q.$ The map $T$ preserves a probability measure $\mu$ defined by $d\mu= c \cos \theta dr d\theta$, where $c$ is the normalizing constant. If we denote by $\tau(x)$ the first non-tangential collision time of the orbit flow starting at $x$, then $T$ is defined when $\tau(x)<\infty$ and in this case it is uniformly hyperbolic. In particular $T$ has a finite horizon if $\tau$ is bounded from above, otherwise $T$ has infinite horizon. The map $T$ shares the same properties of the piecewise hyperbolic maps studied in this section, with a relevant difference: its derivative $DT$ becomes infinite near singularities. This fact will induce a slight change in the definition of the norms. The latter are defined exactly as in section 9 but the norm on the test function $\varphi$  (\ref{TF}) is now modified as
$$
|\varphi|_{W, \alpha, q}=|W|^{\alpha} \cdot \cos W \cdot |\phi|_{C^q(W, \mathbb{C})},
$$
where $\cos W=\frac{1}{m_W(W)}\int_W \cos\theta dm_W,$ being $m_W$ the unnormalized Lebesgue measure on the stable curve $W.$ With this precaution, the Banach space $\mathcal{B}$ is defined as the completion of $C^1(M, \mathbb{C})$ with respect to the norm given by the sum of the strong norm defined in (\ref{ssnorm}) and the strong unstable norm given by (\ref{sunorm}).  In \cite{DM1},  Demers and Zhang established the Lasota-Yorke inequality and the associated spectral picture in the deterministic setting of a single billiard map $T$. The main technical  difference with \cite{DL} was the control of distortion which required additional cuts at the boundaries of homogeneity strips with the consequence of generating a countably infinite number of curves in both the finite and infinite horizon cases. In \cite{DM2} the same authors introduced a distance between  maps, see section 3.4 in \cite{DM2}. Then they  consider a family $\mathcal{F}$ of billiard maps such that by taking composition of maps close in $\mathcal{F},$ that composition has the same hyperbolic and distortion properties of the iterates of a single map, see the discussion in section 5.3 in \cite{DM2}. Moreover they defined a random walk on $M$ by choosing the sequence of maps in $\mathcal{F}$ in an i.i.d. way with a prescribed density (see their $g(\omega,\cdot)$ in section 2.3), where $\omega$ belongs to the probability space $(\Omega, \nu).$ By averaging the Perron-Fr\"obenius operators associated to the maps in the sequence over $\nu$ they finally defined an averaged transfer operator and applied to it standard perturbation theory, thus getting the {\em annealed} limit theorems stated in their Theorem 2.6. Our approach is devoted to quenched results; for that and by eventually  reducing  the family $\mathcal{F}$ by the Conze-Raugi criterion which we already employed in the previous sections, we get the quenched Lasota-Yorke inequality and the exponential decay expressed by the bound (\ref{DEC}), which are the bases of our theory. Theorems A,B and C then follows for the billiards maps associated with perturbations of the periodic Lorentz gas described above.

\section{Acknowledgements}
DD was supported by Croatian Science Foundation under the project IP-2014-09-2285 and by the University of Rijeka under the project
number 17.15.2.2.01.
GF and CGT thank AMU, CPT and CIRM (Marseille) for hospitality during this research.
GF is partially supported by an ARC Discovery project. CGT is supported by an ARC DECRA.
SV was supported  by the MATH AM-Sud Project
Physeco, and by the project APEX Syst\`emes dynamiques: Probabilit\`es et Approximation
Diophantienne PAD funded by the R\'egion PACA (France). SV warmly thanks the
LabEx Archim\'ede (AMU University, Marseille), and INdAM (Italy).

\end{document}